\let\amsamp=&
\theoremstyle{plain}
\newtheorem{thm}{Theorem}[section]
\newtheorem*{thm*}{Theorem}
\newtheorem{prop}[thm]{Proposition} 
\newtheorem*{prop*}{Proposition} 
\newtheorem{lemma}[thm]{Lemma}
\theoremstyle{definition} 
\newtheorem{defn}[thm]{Definition}
\newtheorem*{note*}{Note}
\newtheorem{rem}[thm]{Remark}
\newtheorem{exmp}[thm]{Example}
\newtheorem{corollary}[thm]{Corollary}
\newcommand{\bgd}[2]{^{{#1},{#2}}} % bigrading
\newcommand{\cl}[2]{{#1}\mathrm{\hbox{-}{#2}}} % inj,proj,cell classes
\newcommand{\Fun}{\mathrm{Fun}} % functor category
\newcommand{\kk}{R} % ground ring
\newcommand{\op}{\mathrm{op}} % opposite category
\newcommand{\N}{\mathbb{N}} % natural numbers
\newcommand{\Z}{\mathbb{Z}} % integers
\DeclareMathOperator{\im}{im} % image
\newcommand{\kkpic}{\vcenter{\hbox{\tiny$\bullet$}}} % \kk in diagrams
\newcommand{\kkpqpic}{*} %\kk in bidegree (p,q)
\newcommand{\Cc}{\mathcal{C}}
\newcommand{\Ee}{\mathcal{E}}
\newcommand{\Mm}{\mathcal{M}}
\newcommand{\Ww}{\mathcal{W}}
\newcommand{\Yy}{\mathcal{Y}}
\newcommand{\Zzw}{\mathcal{ZW}}
\newcommand{\Bbw}{\mathcal{BW}}
\newcommand{\vbic}{\mathrm{vbC}_\kk} % vertical bicomplexes
\newcommand{\ncpx}{n\text{-}\mathrm{mC}_\kk} % n-multicomplexes
\newcommand{\cpx}[1]{#1\text{-}\mathrm{mC}_\kk} % n-multicomplexes, fixed n
\newcommand{\nCh}{n\text{-}\mathrm{mC}} % n-multicomplexes, no subscript
\newcommand{\Mod}{\text{-}\mathrm{Mod}} % modules
\newcommand{\hmat}[2]{\mbox{\tiny$\begin{pmatrix} #1&\hspace{-0.75em}#2\end{pmatrix}$}}
\newcommand{\vmat}[2]{\mbox{\tiny$\begin{pmatrix} #1\\ #2\end{pmatrix}$}}
\newcommand{\idmat}{\mbox{\tiny$\begin{matrix} 1\end{matrix}$}}
\newcommand{\mat}[4]{\mbox{\tiny$\begin{pmatrix} #1&\hspace{-0.75em}#2 \\ #3&\hspace{-0.75em}#4 \end{pmatrix}$}}
\title{Model category structures on multicomplexes}
\author{Xin Fu}
\address[Xin Fu]{Department of Mathematics, Ajou University, San 5, Woncheon-dong, Yeongtonggu, Suwon 443-749, Republic of Korea}
\email{xfu87@ajou.ac.kr}
\author{Ai Guan}
\address[Ai Guan]{Department of Mathematics and Statistics, Lancaster University, Lancaster LA1 4YF, UK}
\email{a.guan@lancaster.ac.uk}
\author{Muriel Livernet}
\address[Muriel Livernet]{Universit\'e de Paris, Sorbonne Universit\'e, CNRS, Institut de Mathématiques de Jussieu-Paris Rive Gauche, F-75013 Paris, France}
\email{livernet@math.univ-paris-diderot.fr}
\author{Sarah Whitehouse}
\address[Sarah Whitehouse]{
School of Mathematics and Statistics\\
University of Sheffield\\ Sheffield\\ S3 7RH\\ UK}
\email{s.whitehouse@sheffield.ac.uk }
\date{\today}
\subjclass[2010]{
18G55, % homotopical algebra
18G40%spectral sequences, hypercohomology
}
\keywords{multicomplex, spectral sequence, model category}
\begin{document}
\maketitle

\begin{abstract}
  We present a family of model structures on the category of multicomplexes. There is a cofibrantly generated model structure in which the weak equivalences are the morphisms inducing an isomorphism at a fixed stage 
  of an associated spectral sequence. Corresponding model structures are given for truncated versions of multicomplexes, interpolating between bicomplexes and multicomplexes. For a fixed stage of the spectral sequence,
  the model structures on all these categories  are shown to be Quillen equivalent.
\end{abstract}

\section{Introduction}

We provide a family of model structures on the category of multicomplexes of $\kk$-modules, for $\kk$ a commutative unital ring. 
A multicomplex is an algebraic structure generalizing the notion of a (graded) chain complex
and that of a bicomplex. The structure involves a family of higher ``differentials'' indexed by the non-negative integers. The terms twisted chain complex and $D_\infty$-module are also used. 
Multicomplexes have arisen in many different places and play an important role in homotopical and homological algebra.
A multicomplex has an associated total complex, with filtration, and thus an associated spectral sequence. 

For each $r\geq 0$, we show that there is a cofibrantly generated model structure on the category of multicomplexes in which the weak equivalences are the morphisms inducing an isomorphism at the
$(r+1)$-th page of the spectral sequence. The fibrations are explicitly specified via surjectivity conditions. 

We also provide such models for certain truncated versions of these structures, the $n$-multicomplexes. 
We write $\ncpx$ for the category of $n$-multicomplexes.
The case $n=2$ gives the category of bicomplexes and the results here recover those of ~\cite{celw}. Multicomplexes can be thought
of as the case $n=\infty$ and we make frequent use of this  notational device.

A key ingredient of the model structures is the explicit description of the spectral sequence associated to a multicomplex in~\cite{lwz}. The main techniques  imitate the work of~\cite{celw}, using representable versions of $r$-cycles and $r$-boundaries to provide 
generating (trivial) cofibrations for the model structures. One difference, however, is that we describe the representing objects for cycles via an iterated pushout process, as a direct description would be cumbersome. 
The model structures appear in Theorems~\ref{modelZ} and~\ref{modelE}, the latter being a minor variant of the former.

We introduce a graded associative algebra $\mathcal C_n$ in the category of vertical bicomplexes such that $n$-multicomplexes can be viewed as $\mathcal C_n$-modules in vertical bicomplexes.
This allows us to set up, for fixed $r$,  Quillen adjunctions relating the model structures of Theorem~\ref{modelE} on $n$-multicomplexes as $n$ varies. Indeed,  the functors can be viewed as restriction and extension of scalars.
We show that these adjunctions are Quillen equivalences for $n\geq 2$. Multicomplexes can be viewed as the \emph{homotopy-coherent version} of bicomplexes~\cite{lv, lrw13}, so that one would expect 
$\cpx{\infty}$ and  $\cpx{2}$ to have equivalent homotopy theories. Our work confirms that this is the case for the $r$-model structure for each $r$ and that
the same is true for all the intermediate categories of $n$-multicomplexes.
\medskip

Our results can be summarized as follows.
We have a chain of adjunctions:
\begin{center}
  \begin{tikzcd}
    \cpx{1} \arrow[r, shift right] & \cpx{2} \arrow[l, shift right] \arrow[r, shift right] & \cpx{3} \arrow[l, shift right] \arrow[r, shift right] & \cdots \arrow[l, shift right] \arrow[r, shift right] & \ncpx \arrow[l, shift right] \arrow[r, shift right] & \cdots \arrow[l, shift right] \arrow[r, shift right] & \cpx{\infty} \arrow[l, shift right].
  \end{tikzcd}
\end{center}
Apart from at the far left, we may fix any $r\geq 0$ and endow the categories with the $r$-model structure of Theorem~\ref{modelE}. Equipped with these model structures, each adjoint pair, apart from the leftmost one, gives a Quillen equivalence.
The category
$\cpx{1}$ is the category of vertical bicomplexes, where the objects have only one non-trivial structure map, a vertical differential. In this case, we only have the $r=0$ model structure, corresponding to the usual projective model structure on cochain complexes. Indeed, in this case the associated spectral sequence degenerates at the $E_1$ page and the notions of equivalence in our hierarchy all coincide. The leftmost adjoint pair gives a Quillen adjunction for $r=0$, but it is not a Quillen equivalence.

In the category of $n$-multicomplexes with the $r$-model structure of Theorem~\ref{modelE}, let the weak equivalences be denoted
$\mathcal{E}_r^n$, the fibrations $\it{Fib}_r^n$ and the cofibrations
$\it{Cof}_r^n$. Then for all $n\geq 2$ and $r\geq 0$ we have
\[
  \mathcal{E}_r^n\subseteq \mathcal{E}_{r+1}^n, \qquad  {\it{Fib}}_{r+1}^n\subseteq {\it{Fib}}_r^n, \qquad
  {\it{Fib}}_r^n\cap  \mathcal{E}_r^n \subseteq {\it{Fib}}_{r+1}^n\cap \mathcal{E}_{r+1}^n ,\qquad {\it{Cof}}_{r+1}^n\subseteq {\it{Cof}}_r^n.  
\]

\medskip

The paper is arranged as follows.
In \cref{S:preliminaries} we give the necessary background on multicomplexes and related categories.
\cref{S:modcat} presents the $r$-model category structure on these categories for each $r\geq 0$. 
In \cref{S:relationships} we describe the relationships between these model structures, setting up the Quillen adjunctions and equivalences.
\cref{S:bounded} considers analogues of the previously obtained model category structures for bounded multicomplexes.
\cref{S:cofibrants} gives various examples of cofibrations and cofibrant replacements for our model category structures, in both the bounded and unbounded cases.

\subsection*{Acknowledgements} We would like to thank the
Hausdorff Research Institute for Mathematics for hosting the Women in Topology III workshop and for financial support. We would also like to thank the NSF (NSF-DMS 1901795, NSF-HRD 1500481 -- AWM ADVANCE grant) and
Foundation Compositio Mathematica for financial support for this event.

\section{Notations and preliminaries}
\label{S:preliminaries}

In this section, we summarize the definitions and results on multicomplexes that are required for the rest of the paper, and fix sign and grading conventions.
Throughout this paper, $\kk$ will denote a commutative unital ground ring and $\kk\Mod$ will denote the category of $\kk$-modules.

\begin{defn}
  A \emph{multicomplex} or \emph{$\infty$-multicomplex} $A$ is a $(\Z,\Z)$-bigraded $\kk$-module $A = \{A^{p,q}\}_{p,q\in\Z}$  endowed with a family of maps $\{d_i \colon A \to A\}_{i \ge 0}$ of bidegree $(-i,1-i)$ satisfying for all $l \ge 0$,
  \begin{equation}\label{E:multicomplex}
    \sum_{i+j=l} (-1)^i d_id_j=0.
  \end{equation}
  Let $n \ge 1$ be an integer.
  An \emph{$n$-multicomplex} is a multicomplex with $d_i=0$ for all $i\geq n$.

  For $1 \leq n \leq \infty$, a \emph{\textup(strict\textup) morphism} of $n$-multicomplexes is a map $f$ of bigraded $\kk$-modules of bidegree $(0,0)$ satisfying $d_if=fd_i$ for all $i\geq 0$.
  We denote by $\ncpx$ the category of $n$-multicomplexes and strict morphisms.
\end{defn}

For example, a $1$-multicomplex is a vertical bicomplex, as defined in \cite[Section 2.1]{lrw13}, that is, a $(\Z,\Z)$-bigraded $\kk$-module endowed with a differential $d_0$ of bidegree $(0,1)$. 
The category $\cpx{1}$ will also be denoted by $\vbic$ when we need to emphasize vertical bicomplexes.

A $2$-multicomplex is a bicomplex, with the convention $d_0d_1=d_1d_0$; thus the chosen sign convention agrees with~\cite[Definition 2.10]{celw}. 

As observed in~\cite[Remark 2.2]{lwz}, the above choice of sign convention for multicomplexes gives an isomorphic category to the version without signs in the relations.

\medskip

By~\cite[Lemma 3.3]{celw18}, the category of multicomplexes is symmetric monoidal, where the monoidal structure is given by the bifunctor 
\[\otimes \colon \cpx{\infty}\times \cpx{\infty}\to \cpx{\infty}\]
which on objects is given by
$((A,d_i^A),(B,d_i^B))\mapsto (A\otimes B, d^A_i\otimes 1 + 1\otimes d^B_i)$
and on strict morphisms is given by
$(f, g)\mapsto f\otimes g$.
The symmetry isomorphism is given by the morphism of multicomplexes
\[\tau_{A\otimes B} \colon A\otimes B \to B\otimes A\]
defined by
\[a\otimes b \mapsto (-1)^{\langle a, b \rangle}b\otimes a.\]
Here for $a$, $b$ of bidegree $(a_1,a_2)$, $(b_1,b_2)$ respectively, we let $\langle a,b \rangle=a_1b_1+a_2b_2$.

This functor also describes a symmetric monoidal structure on $\ncpx$ for each $n\geq 1$ by restriction.

For the rest of this section, let $r \geq 0$ be an integer.
We consider the spectral sequence $E_r^{*,*}(A)$ associated to the multicomplex $A$ as described in \cite[Proposition 2.8]{lwz}. 
The following is a reformulation of the description in~\cite[Definition 2.6]{lwz} to make the notation consistent with~\cite{celw} in the case of bicomplexes.

\begin{prop}[cf.~\protect{\cite[Lemma 2.13]{celw}}]
  Let\/ $(A,d_0,d_1,\dots,d_n,\dots)$ be a multicomplex. Then
  \[
    E_r^{p,q}(A) \cong Z_r^{p,q}(A) / B_r^{p,q}(A)
  \]
  where the cycles are $Z_0^{p,q}(A) := A^{p,q}$ and for all $r \ge 1$,  
  \begin{align*}  
    Z_r^{p,q}(A) := \bigl\{a_0 \in A^{p,q} \mid{} & \text{for all\/ $0 \le l \le r-1$,}\\
    &\text{$\textstyle\sum_{i+j=l} (-1)^id_ia_j = 0$ for some $a_j \in A^{p-j,q-j}$, $1 \le j \le r-1$} \bigr\},
  \end{align*}
  and the boundaries are $B_0^{p,q}(A) := 0$, $B_1^{p,q} (A) = A^{p,q} \cap \im{d_0}$,
  and for all $r \ge 2$,
  \begin{equation}\label{br}
    \begin{split}    
      B_r^{p,q}(A) := \bigl\{x \in A^{p,q} \mid{} & \text{there exist $b_i \in A^{p+r-1-i,q+r-2-i}$ for $0 \le i \le r-1$ such that}\\
        & x=\sum_{i=0}^{r-1}(-1)^i d_ib_{r-i-1},\\
      &\text{and $\sum_{i=0}^{l}(-1)^i d_ib_{l-i}=0$ for $0 \le l \le r-2$}\bigr\}.
    \end{split}
  \end{equation}

  The differential $\Delta_r \colon Z_r^{p,q}(A)/B_r^{p,q}(A)\rightarrow  Z_r^{p-r,q+1-r}(A)/B_r^{p-r,q+1-r}(A)$
  is given by 
  \[
    \Delta_r([a_0])=\left[\sum_{i=1}^r (-1)^i d_i a_{r-i}\right]. \eqno\qed
  \]
\end{prop}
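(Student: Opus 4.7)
The plan is to derive the explicit formulas from the classical description of the spectral sequence of a filtered complex, applied to the totalisation of $A$ with the column filtration. One takes the total differential $D = \sum_{i \geq 0}(-1)^i d_i$ on the appropriately graded total object, noting that $D^2 = 0$ is precisely the relation \eqref{E:multicomplex}. Filtering by the first index yields the spectral sequence $E_r^{*,*}(A)$, and the three formulas in the statement will be obtained by reading off the classical $Z_r / B_r$ descriptions componentwise.

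First I would unravel the cycle condition. An element of filtration level $p$ can be written $x = (a_0, a_1, \dots)$ with $a_j \in A^{p-j, q-j}$, and the component of $Dx$ in $A^{p-l, q+1-l}$ is $\sum_{i+j=l}(-1)^i d_i a_j$; the requirement that $Dx$ lie in filtration $\geq p+r$ becomes the vanishing of these components for $0 \leq l \leq r-1$, and passing modulo higher filtration retains only the class of $a_0$, thus recovering the stated $Z_r^{p,q}(A)$. For the boundaries, writing $y = (b_0, b_1, \dots)$ of filtration $\geq p-r+1$ with $b_i \in A^{p+r-1-i, q+r-2-i}$ and asking that $Dy$ have filtration $\geq p$ forces $\sum_{i=0}^{l}(-1)^i d_i b_{l-i} = 0$ for $0 \leq l \leq r-2$, after which the $A^{p,q}$-component of $Dy$ equals $\sum_{i=0}^{r-1}(-1)^i d_i b_{r-1-i}$, which is exactly \eqref{br}. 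Finally, the differential $\Delta_r [a_0]$ is represented by the $A^{p-r, q+1-r}$-component of $Dx$, and the cycle condition forces the lower-filtration terms to vanish, leaving $\sum_{i=1}^{r}(-1)^i d_i a_{r-i}$.

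The main obstacle is sign bookkeeping: one has to confirm that a single sign convention on $D$ simultaneously yields the $(-1)^i$ prefactors in $Z_r^{p,q}$, \eqref{br} and $\Delta_r$, and is compatible with \eqref{E:multicomplex} as the identity $D^2 = 0$. Once this is done the componentwise computations above are routine, and the outcome can be cross-checked against \cite[Proposition 2.8]{lwz}---transported to our sign convention via the categorical isomorphism of \cite[Remark 2.2]{lwz}---and against \cite[Lemma 2.13]{celw} in the case $n = 2$.
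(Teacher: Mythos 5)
The paper itself offers no argument for this Proposition: it is stated with a \qed as a reformulation of \cite[Definition 2.6, Proposition 2.8]{lwz} (cf.\ \cite[Lemma 2.13]{celw} for bicomplexes), the only content being the translation into the present sign convention. So your plan --- filter the totalisation by columns, apply the classical $Z_r/B_r$ description of the spectral sequence of a filtered complex, and read everything off componentwise --- is a genuinely different route, namely a proof from scratch of the cited result, and the componentwise skeleton is correct: the vanishing of the first $r$ components of $Dx$ gives the zig-zag conditions defining $Z_r^{p,q}$, extension by zero gives surjectivity of the projection onto $a_0$ with kernel the next-lower cycle group, the column-$p$ component of $Dy$ for $y$ starting in column $p+r-1$ recovers \eqref{br}, and choosing the zero extension (or absorbing $d_0a_r$ into $B_r$) yields the stated representative for $\Delta_r$.

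However, the step you treat as a remark is wrong as written, and it is exactly where the content lies. With the convention \eqref{E:multicomplex}, the operator $D=\sum_{i\ge 0}(-1)^i d_i$, with signs depending only on $i$, does not square to zero: grouping by $l=i+j$ gives $\sum_{i+j=l}(-1)^{i+j}d_id_j=(-1)^l\sum_{i+j=l}d_id_j$, so $D^2=0$ is the \emph{unsigned} relation, not \eqref{E:multicomplex}; and one can check that no choice of constant signs $\epsilon_i$ makes $\sum_i\epsilon_id_i$ square to zero using \eqref{E:multicomplex}. A genuine total differential requires signs depending on the element, e.g.\ $D(a)=\sum_i(-1)^{i|a|}d_i(a)$ with $|a|=q-p$, or a prior passage to the sign-free convention via the isomorphism of categories in \cite[Remark 2.2]{lwz}. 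This is not cosmetic for your argument: with the corrected $D$ the componentwise conditions read $\sum_{i+j=l}(-1)^{i|a|}d_ia_j=0$, which carry the uniform $(-1)^i$ prefactors of the statement only in odd total degree; to land on the stated $Z_r^{p,q}$, $B_r^{p,q}$ in general one must rescale the witnesses (e.g.\ $a_j\mapsto(-1)^ja_j$), and this rescaling multiplies the representative of $\Delta_r[a_0]$ by a global sign such as $(-1)^r$. Since the Proposition asserts a specific formula for $\Delta_r$, this reconciliation must be carried out explicitly rather than declared routine. You should also say which totalisation (direct sum or product) you filter and identify it with the spectral sequence of \cite{lwz}, since that is what $E_r^{p,q}(A)$ means in this paper; your extend-by-zero arguments are insensitive to the choice, but the identification should be stated.
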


\begin{defn}
  Let $2\leq n\leq \infty$. 
  A morphism of $n$-multicomplexes $f \colon A\to B$ is said to be an \textit{$E_r$-quasi-isomorphism} if
  the morphism $E_r(f) \colon E_r(A)\to E_r(B)$ at the $r$-stage of the associated spectral 
  sequence is a quasi-isomorphism of $r$-bigraded complexes (that is, $E_{r+1}(f)$ is an isomorphism).
\end{defn}

Denote by $\Ee_r^n$ the class of $E_r$-quasi-isomorphisms of $\cpx{n}$. 
This class contains all isomorphisms of $\cpx{n}$, satisfies the two-out-of-three property and is closed under retracts.

\medskip
Finally, we recall from \cite{celw18} the definition of $r$-homotopies of multicomplexes in the context of strict morphisms.

\begin{defn}\cite[Proposition 3.18]{celw18} \label{D:rhomotopy}
  Let $f,g \colon A \to B$ be two strict morphisms of multicomplexes. 
  An \emph{$r$-homotopy} $h$ from $f$ to $g$ is a collection of maps $h_m \colon A \to B$ of bidegree $(-m+r, -m+r-1)$ satisfying for all $m\geq 0$,
  \[
    \sum_{i+j=m} (-1)^{i+r}d_ih_j+(-1)^i h_id_j=\begin{cases} g-f& \text{ if } m=r,\\ 0& \text{ if } m\not=r.\end{cases}
  \]
  We write $f \simeq_r g$ if there is an $r$-homotopy from $f$ to $g$.
  A morphism $f \colon A \to B$ is an \emph{$r$-homotopy equivalence} if there exists a morphism $g \colon B \to A$ such that $f \circ g \simeq_r 1_B$ and $g \circ f \simeq_r 1_A$.
  A multicomplex $A$ is \emph{$r$-contractible} if $1_A \simeq_r 0$.
\end{defn}

Any $r$-homotopy equivalence is an $E_r$-quasi-isomorphism by \cite[Proposition 3.24]{celw18}.

\section{Model structures on multicomplexes and $n$-multicomplexes, for $n\geq 2$}
\label{S:modcat}

We now describe our model category structures on $n$-multicomplexes, for $2 \le n \le \infty$. 
In the case $n = 2$, the model category structures here are precisely those of bicomplexes obtained in \cite{celw}, and indeed, the proofs for general $n$-multicomplexes are essentially the same. 
Just like for bicomplexes, a key idea in the proof is to show that the spectral sequence admits a description in terms of certain \emph{witness} functors that have the advantage of being representable.
Our presentation here differs from \cite[Sections 4.1--4.2]{celw} in that we show the representing objects for the witness functors can be defined recursively; this is helpful for avoiding notational difficulties in the general multicomplex case.

\subsection{Cofibrantly generated model categories}

We collect some definitions and results on cofibrantly generated model categories from~\cite{Hovey}.

\begin{defn}
  Let $\Cc$ be a category with all small colimits and limits and $I$ be a class of maps in $\Cc$.
  \begin{enumerate}
    \item A morphism is called \textit{$I$-injective} (resp.~\textit{$I$-projective}) if it has the right (resp.~left)
      lifting property with respect to morphisms in $I$. We write
      \[\cl{I}{inj}:=\mathrm{RLP}(I)\text{ and }\cl{I}{proj}:=\mathrm{LLP}(I).\]
    \item  A morphism is called an \textit{$I$-fibration} (resp.~\textit{$I$-cofibration}) if it has the right (resp.~left) lifting property with respect to 
      $I$-projective (resp.~$I$-injective) morphisms. We write
      \[\cl{I}{fib}:=\mathrm{RLP}(\cl{I}{proj})\text{ and }\cl{I}{cof}:=\mathrm{LLP}(\cl{I}{inj}).\]
    \item A map is a \textit{relative $I$-cell complex} if it is a transfinite composition of pushouts of
      elements of $I$. We denote by $\cl{I}{cell}$ the class of relative $I$-cell complexes.
  \end{enumerate}
\end{defn}

\begin{defn}A model category $\Cc$ is said to be  \textit{cofibrantly
  generated} if there are sets $I$ and $J$ of maps such that the following conditions hold.
  \begin{enumerate}
    \item The domains of the maps of $I$ are small relative to $\cl{I}{cell}$.
    \item  The domains of the maps of $J$ are small relative to $\cl{J}{cell}$.
    \item  The class of fibrations is $J$-inj.
    \item The class of trivial fibrations is $I$-inj.
  \end{enumerate}
  The set $I$ is called the \textit{set of generating cofibrations}, and $J$ the \textit{set of generating trivial cofibrations}.
\end{defn}

The following is a consequence of Kan's Theorem 
(cf. \cite[Theorem 11.3.1]{Hir} or \cite[Theorem 2.1.19]{Hovey}).

\begin{thm}[D.~M.~Kan]
  Suppose\/ $\Cc$ is a category with all small colimits and limits.
  Let $\Ww$ be a subcategory of\/ $\Cc$ and $I$ and $J$ be sets of maps in\/ $\Cc$.
  Then there
  is a cofibrantly generated model structure on\/ $\Cc$ with $I$ as the set of generating
  cofibrations, $J$ as the set of generating trivial cofibrations, and\/ $\Ww$ as the subcategory
  of weak equivalences if and only if the following conditions are satisfied.
  \begin{enumerate}
    \item\label{T:Hovey1} The subcategory\/ $\Ww$ satisfies the two-out-of-three property and is closed under
      retracts.
    \item\label{T:Hovey2} The domains of\/ $I$ are small relative to\/ $\cl{I}{cell}$.
    \item\label{T:Hovey3} The domains of\/ $J$ are small relative to\/ $\cl{J}{cell}$.
    \item $\cl{J}{cof}\subseteq \Ww$.
    \item $\cl{I}{inj}= \Ww\cap \cl{J}{inj}$.
  \end{enumerate}
\end{thm}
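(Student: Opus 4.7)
The plan is to prove the two directions separately, with the reverse direction being the substantive one. For the forward direction (necessity), assume we already have a cofibrantly generated model structure. Then (1), (2), (3) are part of the definition. Condition (4) holds because any map in $\cl{J}{cof}$ has the left lifting property against every map in $\cl{J}{inj}=\{\text{fibrations}\}$, making it a trivial cofibration and in particular a weak equivalence. Condition (5) is immediate from the two descriptions of trivial fibrations: by definition they are $\cl{I}{inj}$, and by the model axioms they are the intersection of fibrations $\cl{J}{inj}$ with $\Ww$.

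For the reverse direction (sufficiency), I would define weak equivalences as $\Ww$, fibrations as $\cl{J}{inj}$, and cofibrations as $\cl{I}{cof}$, and then verify the model category axioms. Existence of small limits and colimits, the two-out-of-three property, and closure under retracts are handled by hypothesis and by the standard observation that classes defined by lifting properties are retract-closed. The ``cofibration vs.\ trivial fibration'' half of the lifting axiom is immediate: by~(5) trivial fibrations are exactly $\cl{I}{inj}$, against which cofibrations $\cl{I}{cof}$ lift by definition.

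The main work is the factorization axiom, where I would invoke the small object argument twice. Hypothesis~(2) supplies enough smallness to factor any morphism as a map in $\cl{I}{cell}\subseteq\cl{I}{cof}$ followed by one in $\cl{I}{inj}$, giving a cofibration followed by a trivial fibration. Similarly, (3) allows one to factor as a map in $\cl{J}{cell}$ followed by one in $\cl{J}{inj}$; the second factor is a fibration. For the first factor to be a trivial cofibration I need to check that $\cl{J}{cell}$ lies in $\cl{I}{cof}\cap\Ww$. The containment $\cl{J}{cell}\subseteq\cl{J}{cof}\subseteq\Ww$ uses the stability of $\cl{J}{cof}$ under pushouts and transfinite composition together with~(4), while $\cl{J}{cof}\subseteq\cl{I}{cof}$ follows from~(5) via the implication $\cl{I}{inj}\subseteq\cl{J}{inj}\Rightarrow J\subseteq\cl{I}{cof}$.

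Finally, for the remaining half of the lifting axiom (trivial cofibrations against fibrations), I would apply the standard retract argument: any trivial cofibration $g$ factors as $g=p\circ i$ with $i\in\cl{J}{cell}$ and $p\in\cl{J}{inj}$; by two-out-of-three $p$ is a weak equivalence, hence a trivial fibration, hence in $\cl{I}{inj}$, so $g$ lifts against $p$ and is therefore a retract of $i\in\cl{J}{cof}$. Thus every trivial cofibration lies in $\cl{J}{cof}$ and lifts against fibrations. The anticipated obstacle is verifying the smallness and transfinite-closure bookkeeping needed for the small object argument and, more conceptually, establishing $\cl{J}{cell}\subseteq\Ww$, which is precisely the role of hypothesis~(4).
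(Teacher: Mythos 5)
Your proposal is correct: it is the standard proof of the recognition theorem (Hovey, Theorem 2.1.19 / Hirschhorn, Theorem 11.3.1), which the paper itself quotes without reproving, and your handling of the paper's variant hypotheses — deducing $\cl{J}{cof}\subseteq\cl{I}{cof}$ and the identification of trivial fibrations with $\cl{I}{inj}$ from condition (5), and $\cl{J}{cell}\subseteq\Ww$ from condition (4), before running the small object argument and the retract argument — is exactly how the cited proof goes. No gaps; nothing further to add.
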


Note that the categories of $n$-multicomplexes we will consider satisfy the assumptions of this theorem as well as conditions (\ref{T:Hovey1}), (\ref{T:Hovey2}) and (\ref{T:Hovey3}).

\subsection{Witness cycles and witness boundaries in multicomplexes}

We begin by defining the witness cycles and witness boundaries functors and showing that they can be used to describe the spectral sequence of a multicomplex.

\begin{defn}
  Let $A$ be a multicomplex and $r \ge 0$.

  Define the \emph{witness $r$-cycles} $ZW_r^{p,q}(A)$ to be the bigraded $\kk$-modules 
  $ZW_0^{p,q}(A) = A^{p,q}$
  and for $r\ge 1$,
  \begin{align*}
    ZW_r^{p,q}(A)=\{(a_0, a_1,\ldots, a_{r-1}) \mid &~a_i \in A^{p-i, q-i}~\text{for}~0\leq i\leq r-1~\text{such~that}\\
      &~\sum_{i+j=l} (-1)^id_ia_j = 0~\text{for}~0\leq l\leq r-1
    \}.
  \end{align*}
  There is a natural map of bigraded $\kk$-modules
  \[
    z_r \colon ZW_r^{p,q}(A) \to Z_r^{p,q}(A)
  \] 
  given by $z_0=1_A$, and for $r\ge 1$,
  \[
    z_r(a_0,\dots,a_{r-1}) = a_0.
  \]

  Define the \emph{witness $r$-boundaries} to be the bigraded $\kk$-modules $BW_0^{p,q}(A) = 0$, $BW_1^{p,q}(A) = A^{p,q}$ and for $r \ge 2$,
  \[
    BW_r^{p,q-1}(A)=ZW_{r-1}^{p+r-1,q+r-2}(A) \oplus A^{p,q-1} \oplus ZW_{r-1}^{p-1,q-1}(A).
  \]
  Writing elements of $BW_r^{*,*}(A)$ as $(b_0,\dots,b_{r-2};a;c_0,\dots,c_{r-2})$ with $a \in A^{*,*}$ and $(b_0,\dots,b_{r-2})$, $(c_0,\dots,c_{r-2}) \in ZW_{r-1}^{*,*}(A)$, there is a natural bidegree $(0,1)$ map of bigraded $\kk$-modules
  \[
    \beta_r \colon BW_r^{p,q-1}(A) \to B_r^{p,q}(A)
  \]
  given by $\beta_0=0$, $\beta_1=d_0$ and for $r \ge 2$, 
  \[
    (b_0,\dots,b_{r-2}; a; c_0,\dots,c_{r-2}) \longmapsto
    d_0a+\sum_{i=1}^{r-1} (-1)^i d_ib_{r-i-1}.
  \]
\end{defn}

We note that the maps $z_r$ and $\beta_r$ are surjective.
\smallskip

The final ingredient we need here is a map from witness boundaries to witness cycles. 
The following lemma is a check necessary for the definition of this map.

\begin{lemma}\label{checkforw} For $r\geq 2$,
  the map  of bigraded $\kk$-modules specified by
  \[
    (b_0,\ldots,b_{r-2}) \longmapsto \big(\sum_{i=1}^{r-1} (-1)^i d_ib_{r-1-i}, -\sum_{i=2}^{r} (-1)^i d_ib_{r-i},  \dots, 
    (-1)^{r-1}\sum_{i=r}^{2r-2}(-1)^i d_ib_{2r-2-i} \big),
  \]
  gives a map from 
  $ZW_{r-1}^{p+r-1,q+r-2}(A)$
  to $ZW_r^{p,q}(A)$.
\end{lemma}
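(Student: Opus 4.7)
The plan is to check two things for the tuple $(a_0,\ldots,a_{r-1})$ produced by the formula: that its entries lie in the bidegrees required for $ZW_r^{p,q}(A)$, and that it satisfies the witness $r$-cycle relations $\sum_{i+j=l}(-1)^i d_i a_j = 0$ for $0\le l\le r-1$. The $\kk$-linearity is automatic since the formula is built from the $\kk$-linear maps $d_i$.

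The bidegree check is immediate. Writing the $k$-th entry as
\[
a_k = (-1)^k \sum_{i=k+1}^{r-1+k}(-1)^i d_i b_{r-1+k-i},\qquad 0\le k\le r-1,
\]
and using $|b_j|=(p+r-1-j,q+r-2-j)$ together with $|d_i|=(-i,1-i)$, each term $d_i b_{r-1+k-i}$ has bidegree $(p-k,q-k)$, so $a_k\in A^{p-k,q-k}$ as needed.

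For the main task I substitute the formula for $a_j$ into $\sum_{i+j=l}(-1)^i d_i a_j$, expand and collect; up to a uniform sign this reduces to showing that, for each $s\in\{0,\ldots,r-2\}$, the expression $\sum_{i=0}^{l}(-1)^i d_i d_{r-1+l-i-s}\,b_s$, summed with the appropriate $(-1)^s$ signs over $s$, vanishes. Setting $M=r-1+l-s$, the multicomplex relation $\sum_{i=0}^{M}(-1)^i d_i d_{M-i}=0$ lets me replace the partial inner sum $\sum_{i=0}^l$ by minus the complementary sum $\sum_{i=l+1}^M$. Reindexing the latter via $i\mapsto M-i$ and then swapping the order of summation in $s$ and the new index, the whole double sum rearranges, up to a global sign, to
\[
\sum_{v=0}^{r-2}(-1)^v d_{r-1+l-v}\Bigl(\sum_{t=0}^{v}(-1)^t d_t b_{v-t}\Bigr).
\]
Each inner sum vanishes for $0\le v\le r-2$, which is precisely what the hypothesis $(b_0,\ldots,b_{r-2})\in ZW_{r-1}^{p+r-1,q+r-2}(A)$ provides.

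The main obstacle is purely bookkeeping: tracking the signs and shifted indices through the two reindexing substitutions and the summation-order swap. Conceptually there is a single idea at work, namely using the multicomplex relation to trade a partial sum for its complement, so that what remains is a direct combination of the witness $(r-1)$-cycle relations for $b$.
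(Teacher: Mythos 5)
Your argument is correct and follows essentially the same route as the paper's proof: substitute, collect the coefficient of each $b_s$, use the multicomplex relation to replace the partial sum $\sum_{i=0}^{l}$ by its complementary sum, then reindex and swap the order of summation so that what remains is a combination of $d$'s applied to the witness $(r-1)$-cycle relations of $(b_0,\ldots,b_{r-2})$, which vanish. (The $(-1)^v$ in your final display is not quite what the rearrangement produces --- the exact expression carries only a global sign $(-1)^{l-1}$ --- but this is immaterial since every inner sum $\sum_{t=0}^{v}(-1)^t d_t b_{v-t}$ is zero.)
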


\begin{proof}
  Let $ \underline{b}=(b_0,\ldots,b_{r-2})\in ZW_{r-1}^{p+r-1,q+r-2}(A)$ and for $0 \leq j \leq r-1$ let 
  \[
    \alpha_j=(-1)^j\sum_{k=j+1}^{r+j-1} (-1)^{k} d_kb_{r+j-1-k}.
  \]
  Proving that $(\alpha_0,\dots,\alpha_{r-1})\in ZW_r^{p,q}(A)$ amounts to computing, for $0\leq l\leq r-1$:
  \begin{align*}
    \sum_{i+j=l}(-1)^id_i\alpha_j
    &=\sum_{t=0}^{r-2}(-1)^{r-1-t}\left(\sum_{i=0}^l (-1)^i d_id_{r+l-1-t-i}\right) b_t\\
    &=\sum_{t=0}^{r-2}(-1)^{r-t}\left(\sum_{i=l+1}^{r+l-1-t} (-1)^i d_id_{r+l-1-t-i}\right) b_t\\
    &\qquad\qquad\text{by the multicomplex relations}\\
    &=\sum_{i=l+1}^{l+r-1}(-1)^{l-1}d_i\left(\sum_{t=0}^{r+l-1-i}(-1)^{l+r-1-t-i}d_{r+l-1-t-i}b_t\right)\\
    &=0 \qquad\text{since $\underline{b}\in ZW_{r-1}^{p+r-1,q+r-2}(A)$}.
  \end{align*}
  Thus the image of $(b_0,\ldots,b_{r-2})$ lies in  $ZW_r^{p,q}(A)$, as required.
\end{proof}

\begin{defn}
  The bidegree $(0,1)$ map of bigraded $\kk$-modules
  \[
    w_r \colon BW_r^{p,q-1}(A) \to ZW_r^{p,q}(A)
  \] 
  is given by $w_0=0$, $w_1=d_0$ and for $r\ge 2$,
  \begin{multline*}
    (b_0,\dots,b_{r-2}; a; c_0,\dots,c_{r-2})\\
    \overset{w_r}\longmapsto \big(d_0a+\sum_{i=1}^{r-1} (-1)^i d_ib_{r-1-i}, d_1a-\sum_{i=2}^{r} (-1)^i d_ib_{r-i}+c_0, d_2a+ \sum_{i=3}^{r+1} (-1)^i d_ib_{r+1-i}+c_1, \dots, \\ 
    d_{r-1}a+(-1)^{r-1}\sum_{i=r}^{2r-2} (-1)^id_ib_{2r-2-i} +c_{r-2} \big).
  \end{multline*}

  This is well-defined as it is the sum of the map defined in Lemma~\ref{checkforw} and the maps
  \[
    A^{p,q-1} \longrightarrow ZW_r^{p,q}(A),\ 
    a \longmapsto (d_0a,d_1a,d_2a,\ldots,d_{r-1}a)\\
  \]
  and
  \[
    ZW_{r-1}^{p-1,q-1}(A)\longrightarrow ZW_r^{p,q}(A),\ 
    (c_0,\ldots,c_{r-2}) \longmapsto (0,c_0,\ldots,c_{r-2})
  \]
  which are well-defined due to the definition of multicomplexes and of $ZW_{r}$.
\end{defn}

All these definitions extend naturally to functors from multicomplexes to $\kk$-modules and natural transformations. 
By abuse of notation we will also denote by $ZW^{p,q}_r$, $BW^{p,q}_r$ the restriction of these functors to the category of $n$-multicomplexes.

\begin{prop}[cf.~\protect{\cite[Proposition 4.3]{celw}}]
  For every $r \ge 0$, for every $p,q\in\Z$, and for $2\leq n\leq \infty$ there is a commutative diagram of natural transformations of functors $\ncpx\to\kk\Mod$
  \begin{center}
    \begin{tikzcd}
      BW_r^{p,q-1} \arrow[r, "w_r"] \arrow[d, "\beta_r"'] & ZW^{p,q}_r \arrow[d, "z_r"] & \\
      B^{p,q}_r \arrow[r, hook,"\iota_r"] & Z^{p,q}_r \arrow[r, two heads,"\pi"] & E^{p,q}_r
    \end{tikzcd}
  \end{center}
  and the natural transformation $\pi_r=\pi\circ z_r \colon ZW^{p,q}_r \to E^{p,q}_r$ induced by the above diagram satisfies
  \[
    \ker \pi_r(A) = \im w_r(A)
  \]
  for every $n$-multicomplex $A$. In particular, $E_r^{p,q}(A)\cong ZW_r^{p,q}(A)/w_r(BW^{p,q-1}_r(A))$.

  Under this isomorphism, the differential on the $r$-page of the spectral sequence
  \[
    \delta_r \colon ZW_r^{p,q}(A)/w_r(BW^{p,q-1}_r(A))\to ZW_r^{p-r,q+1-r}(A)/w_r(BW^{p-r,q-r}_r(A))
  \]
  is given by
  \[
    [(a_0, a_1,\dots, a_{r-1})]\mapsto \left[(\sum_{i=1}^r(-1)^i d_ia_{r-i}, \sum_{i=1}^r(-1)^id_{i+1}a_{r-i},
    \dots, \sum_{i=1}^r(-1)^i d_{i+r-1} a_{r-i})\right].
  \]
\end{prop}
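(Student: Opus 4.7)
The plan is to establish the commutative diagram, derive the easy inclusion $\im w_r\subseteq\ker\pi_r$ from it, prove the reverse inclusion (the main obstacle), and only then verify the differential formula; the isomorphism $E_r^{p,q}(A)\cong ZW_r^{p,q}(A)/w_r(BW_r^{p,q-1}(A))$ will then follow formally from the kernel-image equality together with the surjectivity of $\pi\circ z_r$, which in turn holds since both $\pi$ and $z_r$ are surjective.

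Commutativity of the square is routine: for $r=0,1$ it is immediate, and for $r\geq 2$ both $z_r\circ w_r$ and $\iota_r\circ\beta_r$ send $(b_0,\dots,b_{r-2};a;c_0,\dots,c_{r-2})$ to $d_0a+\sum_{i=1}^{r-1}(-1)^id_ib_{r-1-i}$. It follows that $\pi_r\circ w_r=\pi\circ\iota_r\circ\beta_r=0$ because $\pi$ vanishes on $B_r$, giving $\im w_r\subseteq\ker\pi_r$. For the reverse inclusion I would argue as follows. Given $(a_0,\dots,a_{r-1})\in ZW_r^{p,q}(A)$ with $[a_0]=0$ in $E_r^{p,q}$, the definition of $B_r$ in~\eqref{br} furnishes $b_0,\dots,b_{r-1}$ with $a_0=\sum_{i=0}^{r-1}(-1)^id_ib_{r-1-i}$ and with $(b_0,\dots,b_{r-2})\in ZW_{r-1}^{p+r-1,q+r-2}(A)$. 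Set $a:=b_{r-1}$ and, writing $\alpha_j:=(-1)^j\sum_{k=j+1}^{r+j-1}(-1)^kd_kb_{r+j-1-k}$ for the $j$-th component of the image of $(b_0,\dots,b_{r-2})$ under Lemma~\ref{checkforw}, define $c_{j-1}:=a_j-d_ja-\alpha_j$ for $1\leq j\leq r-1$. Then by design $w_r(b_0,\dots,b_{r-2};a;c_0,\dots,c_{r-2})=(a_0,\dots,a_{r-1})$. The main technical point is that $(c_0,\dots,c_{r-2})\in ZW_{r-1}^{p-1,q-1}(A)$. For $0\leq l\leq r-2$, expanding $\sum_{i+j'=l}(-1)^id_ic_{j'}$ and reindexing via $j=j'+1$ splits it into three contributions: the $a_j$-piece collapses to $(-1)^ld_{l+1}a_0$ by the $ZW_r$ cycle condition on $(a_0,\dots,a_{r-1})$, the $d_ja$-piece collapses to $(-1)^ld_{l+1}d_0a$ by the multicomplex relation~\eqref{E:multicomplex}, and the $\alpha_j$-piece collapses to $(-1)^ld_{l+1}\alpha_0$ because $(\alpha_0,\dots,\alpha_{r-1})\in ZW_r$ by Lemma~\ref{checkforw}. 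Since $\alpha_0=a_0-d_0a$ by definition, the three contributions cancel, as required.

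Turning to the differential formula, I would observe that once the isomorphism $ZW_r/w_r(BW_r)\cong E_r$ is established by projection onto the first slot, descent to the quotient of any natural map on $ZW_r$ whose first component induces $\Delta_r$ is automatic from the well-definedness of $\Delta_r$. Thus it suffices to check that $(\beta_0,\dots,\beta_{r-1})$ with $\beta_j=\sum_{i=1}^r(-1)^id_{i+j}a_{r-i}$ lies in $ZW_r^{p-r,q+1-r}(A)$ and that its first component is $\sum_{i=1}^r(-1)^id_ia_{r-i}$; the latter is immediate. Bidegrees match trivially, and after the substitution $k=i+j$ the tuple $(\beta_0,\dots,\beta_{r-1})$ coincides with the first $r$ components of the image of $(a_0,\dots,a_{r-1})$ under Lemma~\ref{checkforw} applied with $r$ replaced by $r+1$. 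That lemma produces an element of $ZW_{r+1}^{p-r,q+1-r}(A)$, whose cycle conditions for $0\leq l\leq r-1$ involve only the first $r$ components and therefore give exactly the $ZW_r$ cycle conditions for $(\beta_0,\dots,\beta_{r-1})$.
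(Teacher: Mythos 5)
Your proposal is correct. The kernel-image argument is essentially the paper's: you subtract (in effect) $w_r(b_0,\dots,b_{r-2};b_{r-1};0,\dots,0)$ from $(a_0,\dots,a_{r-1})$ and show the leftover components form an element of $ZW_{r-1}^{p-1,q-1}(A)$ — the paper states this as ``a computation shows'' and you supply that computation, correctly, via the three collapsing sums and the identity $\alpha_0=a_0-d_0a$. Where you genuinely diverge is the differential. The paper proves directly that the stated formula preserves $\im w_r$, by exhibiting an explicit preimage: if $\underline{a}=w_r(\underline{b};b_{r-1};\underline{c})$ then $\delta_r(\underline{a})=w_r(\underline{c};\beta_{r-1};\underline{\gamma})$ for explicitly computed $\beta_{r-1},\gamma_j$, and then checks the correspondence with $\Delta_r$. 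You instead transport $\Delta_r$ through the already-established isomorphism (projection to the first slot), so descent and independence of representative come for free from the well-definedness of $\Delta_r$ in the earlier proposition quoted from the multicomplex spectral sequence description; the only things left to verify are that $(\beta_0,\dots,\beta_{r-1})$ lies in $ZW_r^{p-r,q+1-r}(A)$ and that $\beta_0$ is the stated representative of $\Delta_r([a_0])$, and you obtain the membership cleanly by observing that $(\beta_0,\dots,\beta_{r-1})$ consists of the first $r$ components of the output of Lemma~\ref{checkforw} applied with $r$ replaced by $r+1$, whose cycle conditions in degrees $0\le l\le r-1$ involve only those components. Your route is shorter and avoids the sign-heavy explicit $w_r$-preimage, at the cost of leaning on the cited well-definedness of $\Delta_r$ (including independence of the chosen witnesses); the paper's computation is more self-contained and yields, as a by-product, the explicit formula showing $\delta_r$ maps $\im w_r$ into $\im w_r$, which can be useful elsewhere. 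Both are valid; your reuse of Lemma~\ref{checkforw} at level $r+1$ is a nice economy.
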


\begin{proof}
The result is trivial for $r=0$ and $r=1$, so we consider $r\geq 2$.
  It is straightforward to check that the diagram commutes. 
  We next show that $\ker \pi_r\subseteq \mathrm{im}\,w_r$.
  If $\underline{a}=(a_0, a_1,\ldots, a_{r-1})\in ZW^{p,q}_r(A)$ satisfies $\pi_r(\underline{a})=0$, this means that $z_r(\underline{a})\in B^{p,q}_r(A)$, i.e., $a_0\in B^{p,q}_r(A)$. By~\eqref{br}, there exists 
  $(b_0, b_1, \ldots, b_{r-2}; b_{r-1})\in ZW_{r-1}^{p+r-1,q+r-2}(A)\oplus A^{p,q-1}$
  such that $a_0=\sum_{i=0}^{r-1}(-1)^i d_ib_{r-i-1}$.

  Let us compute
  \begin{multline*}
    (a_0,\ldots,a_{r-1})-w_r(b_0,\ldots,b_{r-2};b_{r-1};0,\ldots,0)=\\
    (0,\underbrace{a_1+\sum_{i=1}^{r} (-1)^i d_ib_{r-i}, a_2-\sum_{i=2}^{r+1} (-1)^i d_ib_{r+1-i}, \dots,a_{r-1}-(-1)^{r-1}\sum_{i=r-1}^{2r-2} (-1)^id_ib_{2r-2-i}}_{=:(0,c_0,c_1,\ldots,c_{r-2})}).
  \end{multline*}

  A computation shows that $(c_0,c_1,\dots,c_{r-2})\in ZW^{p-1,q-1}_{r-1}(A)$ so that 
  \[
    \underline{a}=w_r(b_0, \dots, b_{r-2};b_{r-1};c_0, \dots, c_{r-2})\quad \in  \mathrm{im}\,w_r. 
  \]
  Conversely we have 
  \[
    \pi_r \circ w_r=\pi\circ z_r\circ w_r=\pi\circ\iota_r\circ \beta_r=0,
  \]
  so that $\mathrm{im}\,w_r\subseteq \ker \pi_r$.

  Another calculation shows that the claimed  differential $\delta_r$ gives a well-defined map on 
  $ZW_r(A)/w_r(BW_r(A))$. Indeed, if $\underline{a}=w_r(b_0, \dots, b_{r-2};b_{r-1};c_0, \dots, c_{r-2})$,
  then 
  \[
    \delta_r(\underline{a})=w_r(c_0, \dots, c_{r-2};\beta_{r-1};\gamma_0, \dots, \gamma_{r-2}),
  \]
  where
  $\beta_{r-1}=\sum_{l=0}^{r-1} (-1)^{r+l} d_{2r-1-l}b_l$ and
  $\gamma_j=\sum_{i=0}^j\sum_{k=1}^r (-1)^{k+1} d_id_{r+j+k-i}b_{r-k}$, for $0\leq j\leq r-2$.

  It is straightforward to check that $\delta_r$ corresponds to the differential $\Delta_r$ under the isomorphism.
\end{proof}

\begin{lemma}\label{L:kerwr}
  Let $A\in \ncpx$.
  For $r\geq 1$, the kernel of the map $w_r \colon BW_r^{p,q-1}(A) \to ZW_r^{p,q}(A)$ is isomorphic to $ZW_r^{p+r-1,q+r-2}(A)$, via the map $(\underline{b};a;\underline{c})\mapsto (\underline{b},a)$.
\end{lemma}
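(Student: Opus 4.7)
The plan is to use the obvious splitting
\[
w_r(\underline{b};a;\underline{c}) = \gamma + (0,c_0,\dots,c_{r-2}),\qquad \gamma := w_r(\underline{b};a;0),
\]
where, by the decomposition of $w_r$ into three well-defined maps into $ZW_r^{p,q}(A)$ recorded just after the definition of $w_r$, the tuple $\gamma$ lies in $ZW_r^{p,q}(A)$. The vanishing of the zeroth component of $w_r(\underline{b};a;\underline{c})$, namely $\gamma_0 = d_0 a + \sum_{i=1}^{r-1}(-1)^id_ib_{r-1-i} = 0$, supplies precisely the one additional cycle relation needed to promote $(b_0,\dots,b_{r-2}) \in ZW_{r-1}^{p+r-1,q+r-2}(A)$ to $(b_0,\dots,b_{r-2},a) \in ZW_r^{p+r-1,q+r-2}(A)$; so the stated map takes values in $ZW_r^{p+r-1,q+r-2}(A)$. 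For $k\geq 1$, the $k$-th equation reads $c_{k-1} = -\gamma_k$, which shows that $\underline{c}$ is uniquely determined by $(\underline{b},a)$ and gives injectivity.

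For surjectivity, I would start from $(\alpha_0,\dots,\alpha_{r-1}) \in ZW_r^{p+r-1,q+r-2}(A)$, set $b_i := \alpha_i$ for $0\leq i\leq r-2$, $a := \alpha_{r-1}$, and $c_m := -\gamma_{m+1}$ for $0\leq m\leq r-2$. The first $r-1$ cycle relations on $\alpha$ place $(b_0,\dots,b_{r-2})$ in $ZW_{r-1}^{p+r-1,q+r-2}(A)$, so $\gamma$ is defined, and the last cycle relation on $\alpha$ rewrites as $\gamma_0 = 0$. The only outstanding point is that $(c_0,\dots,c_{r-2})$ lies in $ZW_{r-1}^{p-1,q-1}(A)$; given this, $w_r(\underline{b};a;\underline{c}) = 0$ holds by construction.

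The main obstacle is this last check, but it comes essentially for free from $\gamma \in ZW_r^{p,q}(A)$. Substituting $c_j = -\gamma_{j+1}$ into the defining relation $\sum_{i+j=l}(-1)^id_ic_j=0$ for $0\leq l\leq r-2$ and shifting the index, I would reduce it to
\[
\sum_{j'=0}^{l+1}(-1)^{l+1-j'}d_{l+1-j'}\gamma_{j'} \;=\; (-1)^{l+1}d_{l+1}\gamma_0.
\]
The left-hand side is the $(l+1)$-th cycle relation for $\gamma \in ZW_r^{p,q}(A)$, which holds since $0\leq l+1\leq r-1$, while the right-hand side vanishes because $\gamma_0 = 0$. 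No further multicomplex identity is needed beyond those already invoked in showing $\gamma \in ZW_r^{p,q}(A)$, so the verification is merely a careful reindexing rather than a repeat of the bookkeeping in Lemma~\ref{checkforw}.
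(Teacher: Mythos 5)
Your proof is correct and follows essentially the same route as the paper's: identify $\ker w_r$ with pairs $(\underline{b},a)$ by noting that the vanishing of the zeroth component of $w_r$ is exactly the extra cycle relation promoting $\underline{b}\in ZW_{r-1}^{p+r-1,q+r-2}(A)$ to $(\underline{b},a)\in ZW_r^{p+r-1,q+r-2}(A)$, while the remaining components force $\underline{c}$ to be determined by $(\underline{b},a)$. The only difference is that you spell out the point the paper treats as clear, namely that the determined $\underline{c}$ indeed lies in $ZW_{r-1}^{p-1,q-1}(A)$, and your reindexing argument via $\gamma=w_r(\underline{b};a;0)\in ZW_r^{p,q}(A)$ does this correctly.
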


\begin{proof} This is clear from the definition of $w_r$: the element  $(\underline{b};a;\underline{c})$ being in 
  $\ker w_r$ means that $\underline{c}$ is completely determined in terms of $(\underline{b},a)$ and
  that $d_0a=-\sum_{i=1}^{r-1}(-1)^id_ib_{r-1-i}$. 
  Together with $\underline{b}\in ZW_{r-1}^{p+r-1,q+r-2}(A)$, this gives exactly that  $(\underline{b},a)\in  ZW_r^{p+r-1,q+r-2}(A)$.
\end{proof}

The following result is straightforward.

\begin{lemma}\label{L:pullback}
  The following commutative diagrams are pullback squares in the category of $\kk$-modules for every $r\geq 2$ and
  every $n$-multicomplex $A$.
  \begin{center}
    \begin{tikzcd}
      ZW^{p,q}_{1}(A) \arrow[r, hook] \arrow[d] & ZW^{p,q}_0(A) \arrow[d,"d_0"] \\
      0 \arrow[r] & ZW^{p,q+1}_0(A) 
    \end{tikzcd}\qquad
    \begin{tikzcd}
      ZW^{p,q}_{r}(A) \arrow[r,"\pi_{r}"] \arrow[d,"\rho_{r}"'] & ZW^{p-r+1,q-r+1}_0(A) \arrow[d,"d_0"] \\
      ZW^{p,q}_{r-1}(A) \arrow[r,"D_{r-1}"] & ZW^{p-r+1,q-r+2}_0(A) 
    \end{tikzcd}
  \end{center}
  Here $\pi_{r}$ is the projection onto the last coordinate, $\rho_r$ is the projection onto the first $r-1$ components, and
  \[
    D_{r-1} = \sum_{i=1}^{r-1}(-1)^{i+1}d_i \colon (a_0,\ldots,a_{r-2}) \longmapsto \sum_{i=1}^{r-1}(-1)^{i+1}d_ia_{r-1-i}. \eqno\qed
  \]
\end{lemma}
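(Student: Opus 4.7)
The plan is to verify the universal property of a pullback square directly by unpacking the definition of $ZW_r^{p,q}(A)$. For the first square, observe that $ZW_1^{p,q}(A) = \{a_0 \in A^{p,q} \mid d_0 a_0 = 0\}$ is by definition the kernel of the map $d_0 \colon A^{p,q} \to A^{p,q+1}$, and kernels are pullbacks of the zero map, so the first square is a pullback.

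For the second square, the idea is that the defining relations of $ZW_r^{p,q}(A)$ split into two groups when parameterized by $l$: those for $0 \leq l \leq r-2$ and the single remaining relation at $l = r-1$. Given a tuple $(a_0,\dots,a_{r-1})$ with $a_i \in A^{p-i,q-i}$, the relations $\sum_{i+j=l}(-1)^i d_i a_j = 0$ for $0 \leq l \leq r-2$ involve only the truncated tuple $(a_0,\dots,a_{r-2})$ and are precisely the condition that $\rho_r(a_0,\dots,a_{r-1}) \in ZW_{r-1}^{p,q}(A)$. The last relation, at $l = r-1$, can be rewritten as
\[
  d_0 a_{r-1} = \sum_{i=1}^{r-1}(-1)^{i+1} d_i a_{r-1-i},
\]
whose right-hand side is exactly $D_{r-1}\rho_r(a_0,\dots,a_{r-1})$ and whose left-hand side is $d_0\pi_r(a_0,\dots,a_{r-1})$. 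This is exactly the commutativity condition imposed by the pullback.

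It now suffices to observe that the assignment $(a_0,\dots,a_{r-1}) \mapsto (\rho_r(a_0,\dots,a_{r-1}), \pi_r(a_0,\dots,a_{r-1}))$ is a bijection onto $ZW_{r-1}^{p,q}(A) \times_{A^{p-r+1,q-r+2}} A^{p-r+1,q-r+1}$. Injectivity is clear since the components $a_0,\dots,a_{r-2}$ are recovered from $\rho_r$ and $a_{r-1}$ from $\pi_r$. For surjectivity, given $((a_0,\dots,a_{r-2}),a_{r-1})$ in the pullback, the concatenated tuple $(a_0,\dots,a_{r-1})$ satisfies the relations for $l \leq r-2$ by hypothesis on the first coordinate, and the relation at $l = r-1$ by the pullback equation $d_0 a_{r-1} = D_{r-1}(a_0,\dots,a_{r-2})$, so it lies in $ZW_r^{p,q}(A)$.

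There is no substantive obstacle here; the only care required is bookkeeping of bidegrees (so that the diagram makes sense) and verifying that the sign convention in $D_{r-1}$ matches the rearrangement of the $l=r-1$ multicomplex relation.
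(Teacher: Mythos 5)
Your proof is correct, and it is exactly the routine verification that the paper omits (the lemma is stated there with only the remark that it is straightforward): splitting the defining relations of $ZW_r^{p,q}(A)$ into those for $l\leq r-2$, which define $ZW_{r-1}^{p,q}(A)$, and the single relation at $l=r-1$, which is precisely the pullback equation $d_0 a_{r-1}=D_{r-1}(a_0,\dots,a_{r-2})$, with the sign matching the paper's convention. Nothing further is needed.
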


The maps $\pi_r$, $\rho_{r}$, $d_0$ and $D_{r-1}$ define natural transformations between the functors $ZW_r^{p,q}$, and as a consequence we obtain the following proposition.

\begin{prop}\label{P:pullbackfunctors}
  The following commutative diagrams are pullback squares in the functor category $\Fun(\ncpx,\kk\Mod)$ for every $r \ge 2$.
  \begin{center}
    \begin{tikzcd}
      ZW^{p,q}_{1} \arrow[r, hook] \arrow[d] & ZW^{p,q}_0 \arrow[d,"d_0"] \\
      0 \arrow[r] & ZW^{p,q+1}_0
    \end{tikzcd}\qquad
    \begin{tikzcd}
      ZW^{p,q}_{r} \arrow[r,"\pi_{r}"] \arrow[d,"\rho_{r}"' ] & ZW^{p-r+1,q-r+1}_0 \arrow[d,"d_0"] \\
      ZW^{p,q}_{r-1} \arrow[r,"D_{r-1}"] & ZW^{p-r+1,q-r+2}_0 
    \end{tikzcd}
  \end{center} 
\end{prop}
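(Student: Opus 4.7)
My plan is to deduce this proposition immediately from \cref{L:pullback} together with the fact that limits in the functor category $\Fun(\ncpx,\kk\Mod)$ are computed pointwise. There is essentially no new work to do beyond what is already contained in \cref{L:pullback}; the proposition is simply the natural upgrade from a pointwise statement (for each fixed $A$) to a statement in the functor category.

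First, I would verify that the arrows appearing in the two diagrams are indeed natural transformations between the functors $ZW_r^{p,q}\colon\ncpx\to\kk\Mod$. This is the content of the sentence immediately preceding the proposition: the projection maps $\pi_r$ and $\rho_r$ act only on the tuple indices and thus commute with any strict morphism of $n$-multicomplexes, while $d_0$ and $D_{r-1}=\sum_{i=1}^{r-1}(-1)^{i+1}d_i$ are built from the structure maps of multicomplexes and so commute with strict morphisms by definition.  Once naturality is in hand, both diagrams make sense as commutative squares of natural transformations, and the commutativity follows immediately from the componentwise commutativity already observed in \cref{L:pullback}.

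Second, I would invoke the general fact that $\Fun(\ncpx,\kk\Mod)$, being a functor category into the complete category $\kk\Mod$, has all small limits, and these limits are computed objectwise. Consequently, a commutative square in $\Fun(\ncpx,\kk\Mod)$ is a pullback square if and only if, for every object $A\in\ncpx$, its evaluation at $A$ is a pullback square in $\kk\Mod$. Applying this criterion, both diagrams in the proposition evaluate at each $n$-multicomplex $A$ to the corresponding diagrams of \cref{L:pullback}, which are already known to be pullback squares of $\kk$-modules. This finishes the argument.

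The main (and only) potential obstacle is pedagogical rather than mathematical: one must be comfortable asserting that pullbacks in $\Fun(\ncpx,\kk\Mod)$ are pointwise. Since this is standard, the proof can be made extremely short — essentially a single sentence citing \cref{L:pullback} and the pointwise computation of limits in a functor category.
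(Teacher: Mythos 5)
Your proposal is correct and follows exactly the paper's argument: naturality of the maps is noted just before the proposition, and the proof then cites that limits in $\Fun(\ncpx,\kk\Mod)$ are computed objectwise in the complete category $\kk\Mod$, so the pullback property follows directly from \cref{L:pullback}. No gaps.
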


\begin{proof}
  Since $\kk\Mod$ is complete, limits in the functor category exist and they are computed objectwise, so the result follows directly from Lemma~\ref{L:pullback}.
\end{proof}

\begin{rem}\label{mainobs} 
  Similarly to~\cite[Remark 4.5]{celw}, for $2\leq n\leq \infty$, if $f \colon A \to B$ is a morphism of $n$-multicomplexes and $r\geq 1$, then the following are equivalent.
  \begin{enumerate}
    \item The maps $ZW_r(f)$, $ZW_{r-1}(f)$ and $f$ are surjective.
    \item The maps $E_{r}(f)$  and $ZW_{r-1}(f)$ and $f$ are surjective.
  \end{enumerate}
\end{rem}

\subsection{Representing elements} 

We now describe suitable representing objects for the witness cycles and boundaries previously defined.

\begin{defn}
  Let $2 \le n \le \infty$.
  The \emph{$n$-disk at place $(p,q)$}, denoted $\mathbb{D}^n(p,q)$, is the $n$-multicomplex freely generated by a single element $x$ in bidegree $(p,q)$, in the sense of satisfying the following universal construction.
  For any $n$-multicomplex $A$, every map of bigraded sets $\{x\} \to A$ extends uniquely to an $n$-multicomplex morphism $\mathbb{D}^n(p,q) \to A$ such that the following diagram commutes:
  \begin{center}
    \begin{tikzcd}[sep=scriptsize]
      \{x\} \arrow[r, hook] \arrow[rd] & {\mathbb{D}^n(p,q)} \arrow[d] \\
      & A        
    \end{tikzcd}
  \end{center}
\end{defn}

By definition the $n$-multicomplex $\mathbb{D}^n(p,q)$ freely generated by $x$ in bidegree $(p,q)$ is
the quotient of the free bigraded $\kk$-module generated by all finite words $d_{i_1}d_{i_2} \dots d_{i_k} (x)$, $k \geq 0$, $0 \le i_1,\dots,i_k \le n-1$, by the relations
\[
  \sum_{i+j=l} (-1)^i d_id_j(x)=0 \text{ for } l \ge 0,
\]
with differential 
\[
  d_i (d_{i_1}d_{i_2} \dots d_{i_k} (x)) = d_id_{i_1}d_{i_2} \dots d_{i_k} (x).
\]

\begin{rem}
  Using the relations, one can rewrite any word $d_{i_1}d_{i_2} \dots d_{i_k} (x)$ by swapping any occurrence of $d_0$ with all the higher structure maps to its left. 
  The rewriting process and the relation $d_0^2 = 0$ ensure that every word is a linear combination of words of the form
  \begin{equation}\label{E:words}
    d_0^id_{i_1}d_{i_2} \dots d_{i_k}(x), \text{ for } i\in\{0,1\},\ k \ge 0,\ 0 < i_1,\dots,i_k \le n-1.
  \end{equation}
  It is clear from this description that the $d_0$-homology of an $n$-disk is $0$.

  The words listed in \eqref{E:words} above form a basis for the $\infty$-disk; see \cite[Definition 5.4]{mr19} for an explicit description of the $\infty$-disk for multicomplexes concentrated in the right half-plane.
  For $n$ finite, the words listed in \eqref{E:words} are not necessarily distinct or nonzero, so do not form a basis for the $n$-disk.

\end{rem}

\begin{exmp}
  The $3$-multicomplex $\mathbb{D}^3(p,q)$ can be depicted as follows.
  \begin{center}  
    \begin{tikzcd}[sep=scriptsize]
      \cdots\hspace{-2em} & \kkpic            & \kkpic \arrow[l]            & \kkpic \arrow[l] \arrow[lld]            & \kkpic \arrow[l] \arrow[lld]           & \kkpic \arrow[l] \arrow[lld]                     \\
      \cdots\hspace{-2em} & \kkpic\kkpic \arrow[u] & \kkpic\kkpic \arrow[u] \arrow[l] & \kkpic\kkpic \arrow[l] \arrow[u] \arrow[lld] & \kkpic \arrow[u] \arrow[l] \arrow[lld] & \kkpqpic \arrow[u] \arrow[l] \arrow[lld] \\
      \cdots\hspace{-2em} & \kkpic \arrow[u]  & \kkpic \arrow[u] \arrow[l]  & \kkpic \arrow[u] \arrow[l]              &                                   &                                            
    \end{tikzcd}
  \end{center}
  Here each vertex marked $\kkpic$ represents the ring $\kk$, each vertex marked $\kkpic\kkpic$ represents $\kk \oplus \kk$, the vertex marked $\kkpqpic$ represents $\kk$ in bidegree $(p,q)$ and the arrows are
  \begin{align*}
    &d_0 \colon 
    \quad \kkpic \overset{1}\longrightarrow \kkpic,
    \quad \kkpic \xrightarrow{\vmat{0}{1}} \kkpic\kkpic, 
    \quad \kkpic\kkpic \xrightarrow{\hmat{1}{0}} \kkpic, \\ 
    &d_1 \colon 
    \quad \kkpic \overset{1}\longrightarrow \kkpic,
    \quad \kkpic \xrightarrow{\vmat{1}{1}} \kkpic\kkpic, 
    \quad \kkpic\kkpic \xrightarrow{\ \mat{1}{0}{0}{1}\ } \kkpic\kkpic, \\ 
    &d_2 \colon 
    \quad \kkpic \overset{1}\longrightarrow \kkpic,
    \quad \kkpic \xrightarrow{\vmat{1}{0}} \kkpic\kkpic, 
    \quad \kkpic\kkpic \xrightarrow{\hmat{0}{1}} \kkpic. 
  \end{align*}
 
\end{exmp}

\begin{defn}\label{D:curlyZW}
  Let $2\leq n\leq\infty$.
  Define the $n$-multicomplex $\Zzw^n_0(p,q) = \mathbb{D}^n(p,q)$, define $\Zzw^n_1(p,q)$ to be the pushout
  \begin{center}
    \begin{tikzcd}
      \Zzw^n_0(p,q+1) \arrow[r,"d_0^*"] \arrow[d] & \Zzw^n_0(p,q) \arrow[d,"j_0"] \\
      0 \arrow[r] & \Zzw^n_{1}(p,q)
    \end{tikzcd}
  \end{center}
  in the category of $n$-multicomplexes, and for $r \geq 2$, define $\Zzw^n_r(p,q)$ recursively to be the pushout 
  \begin{center}
    \begin{tikzcd}
      \Zzw^n_0(p-r+1,q-r+2) \arrow[r, "d_0^*"] \arrow[d,"D_{r-1}^*"'] & \Zzw^n_0(p-r+1,q-r+1) \arrow[d,"j_{r-1}"] \\
      \Zzw^n_{r-1}(p,q) \arrow[r,"i_{r-1}"] & \Zzw^n_{r}(p,q)
    \end{tikzcd}
  \end{center}
  in the category of $n$-multicomplexes. 
  Here, for all $r \geq 1$, writing $x$ and $a_{r-1}$ for the generators of $\Zzw^n_0(p-r+1,q-r+2)$ and $\Zzw^n_0(p-r+1,q-r+1)$ respectively, the morphism $d_0^*$ is 
  \[
    d_0^*(x) = d_0a_{r-1}.
  \]
 
  By abuse of notation, we also denote the element $j_0(a_0)$ in $\Zzw^n_{1}(p,q)$ by $a_0$.
  For $r \geq 2$, we recursively define the morphism $D_{r-1}^*$ to be
  \[
    D_{r-1}^*(x) = \sum_{i=1}^{r-1}(-1)^{i+1}d_ia_{r-1-i},
  \]
  and again by abuse of notation, we denote the elements $i_{r-1}(a_s)$ ($0 \leq s \leq r-2$) and $j_{r-1}(a_{r-1})$ in $\Zzw^n_{r}(p,q)$ by $a_s$ ($0 \leq s \leq r-2$) and $a_{r-1}$ respectively.
\end{defn}

\begin{exmp}
  The $3$-multicomplex $\Zzw^3_1(p,q)$ can be depicted as:
  \begin{center}
    \begin{tikzcd}[sep=scriptsize]
      \cdots\hspace{-2em} & \kkpic           & \kkpic \arrow[l]           & \kkpic \arrow[l] \arrow[lld] & \kkpic \arrow[l] \arrow[lld] & \kkpqpic \arrow[l] \arrow[lld] \\
      \cdots\hspace{-2em} & \kkpic \arrow[u] & \kkpic \arrow[l] \arrow[u] & \kkpic \arrow[l] \arrow[u]   &                         &                                  
    \end{tikzcd}
  \end{center}

  The $3$-multicomplex $\Zzw^3_2(p,q)$ can be depicted as:
  \begin{center}  
    \begin{tikzcd}[sep=scriptsize]
      \cdots\hspace{-2em} & \kkpic            & \kkpic \arrow[l]            & \kkpic \arrow[l] \arrow[lld]            & \kkpic \arrow[l] \arrow[lld]           & \kkpqpic \arrow[lld] \arrow[l] \\
      \cdots\hspace{-2em} & \kkpic\kkpic \arrow[u] & \kkpic\kkpic \arrow[l] \arrow[u] & \kkpic\kkpic \arrow[l] \arrow[u] \arrow[lld] & \kkpic \arrow[lld] \arrow[l] \arrow[u] &                                   \\
      \cdots\hspace{-2em} & \kkpic \arrow[u]  & \kkpic \arrow[l] \arrow[u]  &                                    &                                   &                                  
    \end{tikzcd}
  \end{center}
\end{exmp}

\begin{defn}
  Define the $n$-multicomplexes 
\[\Bbw^n_0(p,q-1) = 0,\qquad  \Bbw^n_1(p,q-1) = \mathbb{D}^n(p,q-1)
\]
 and for $r \geq 2$, define the $n$-multicomplex $\Bbw^n_r(p,q-1)$ to be
  \[
    \Bbw^n_r(p,q-1)=\Zzw^n_{r-1}(p+r-1,q+r-2) \oplus \mathbb{D}^n(p,q-1) \oplus \Zzw^n_{r-1}(p-1,q-1).
  \]
\end{defn}

\begin{lemma}\label{reval}Let $r\geq 0$ and let $p,q \in \Z$ and $2\leq n\leq \infty$.
  \begin{enumerate}
    \item\label{reval1} \label{ls} Giving a morphism of $n$-multicomplexes $\Zzw^n_r(p,q)\to A$  is equivalent to giving an element in $ZW_{r}\bgd{p}{q}(A)$.
    \item \label{ldr} Giving a morphism of $n$-multicomplexes $\Bbw^n_r(p,q)\to A$  is equivalent to giving an element in $BW_{r}\bgd{p}{q}(A)$.
  \end{enumerate}
  Furthermore, these statements are functorial, so that $\Zzw^n_r(p,q)$, $\Bbw^n_r(p,q)$ are representing $n$-multicomplexes for the functors $ZW^{p,q}_r,\, BW^{p,q}_r \colon \ncpx \to \kk\Mod$ respectively.
\end{lemma}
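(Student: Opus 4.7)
The plan is to prove part (\ref{reval1}) by induction on $r$ using the recursive construction in \cref{D:curlyZW}, and then to deduce part (\ref{ldr}) from part (\ref{reval1}) since $\Bbw^n_r(p,q-1)$ is a coproduct of objects already handled.

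For part (\ref{reval1}), the base case $r=0$ is immediate: by the universal property of the $n$-disk, morphisms $\Zzw^n_0(p,q)=\mathbb{D}^n(p,q)\to A$ correspond bijectively to elements of $A^{p,q}=ZW_0^{p,q}(A)$, sending a morphism $f$ to $f(x)$. For $r=1$, the defining pushout shows that a morphism $\Zzw^n_1(p,q)\to A$ is the same as a morphism $g\colon\Zzw^n_0(p,q)\to A$ whose pre-composition with $d_0^*\colon\Zzw^n_0(p,q+1)\to\Zzw^n_0(p,q)$ is zero. By the base case this is precisely an element $a_0\in A^{p,q}$ with $d_0 a_0=0$, i.e.\ an element of $ZW_1^{p,q}(A)$, and the correspondence sends $g$ to $g(a_0)$.

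For the inductive step $r\ge 2$, suppose the statement holds for $r-1$ (and the $r=0$ case at the shifted bidegree). The pushout definition of $\Zzw^n_r(p,q)$ implies that a morphism $\Zzw^n_r(p,q)\to A$ is the same as a pair of morphisms
\[\varphi\colon\Zzw^n_{r-1}(p,q)\to A,\qquad \psi\colon\Zzw^n_0(p-r+1,q-r+1)\to A\]
such that $\varphi\circ D_{r-1}^* = \psi\circ d_0^*$ as morphisms out of $\Zzw^n_0(p-r+1,q-r+2)$. Under the inductive correspondences, $\varphi$ corresponds to a tuple $(a_0,\dots,a_{r-2})\in ZW_{r-1}^{p,q}(A)$ and $\psi$ corresponds to $a_{r-1}\in A^{p-r+1,q-r+1}$, and the compatibility condition, read off on the generator $x$, becomes
\[\sum_{i=1}^{r-1}(-1)^{i+1}d_ia_{r-1-i}=d_0a_{r-1},\]
which, together with $(a_0,\dots,a_{r-2})\in ZW_{r-1}^{p,q}(A)$, is exactly the condition that $(a_0,\dots,a_{r-1})\in ZW_r^{p,q}(A)$; indeed this is the pullback description from \cref{L:pullback}. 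Thus morphisms $\Zzw^n_r(p,q)\to A$ correspond bijectively to elements of $ZW_r^{p,q}(A)$, and naturality in $A$ is clear from the construction, yielding representability.

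Part (\ref{ldr}) now follows quickly. For $r=0$ both sides are trivial, for $r=1$ the claim reduces to the universal property of $\mathbb{D}^n(p,q-1)$, and for $r\ge 2$, since $\Bbw^n_r(p,q-1)$ is defined as a coproduct in $\ncpx$, a morphism out of it is the same as a triple of morphisms out of the three summands $\Zzw^n_{r-1}(p+r-1,q+r-2)$, $\mathbb{D}^n(p,q-1)$, $\Zzw^n_{r-1}(p-1,q-1)$. By part (\ref{reval1}) these triples are in bijection with elements $(\underline b;a;\underline c)\in ZW_{r-1}^{p+r-1,q+r-2}(A)\oplus A^{p,q-1}\oplus ZW_{r-1}^{p-1,q-1}(A)=BW_r^{p,q-1}(A)$, naturally in $A$. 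The only real obstacle is bookkeeping in the $r\ge 2$ induction step of (\ref{reval1})—namely, checking that the pushout compatibility translates word-for-word into the pullback defining $ZW_r^{p,q}(A)$—and this is precisely what has been set up by combining \cref{D:curlyZW} with \cref{L:pullback,P:pullbackfunctors}.
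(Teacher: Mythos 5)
Your proposal is correct and takes essentially the same route as the paper: induction on $r$, using that morphisms out of the defining pushouts of \cref{D:curlyZW} correspond to compatible pairs (i.e.\ Yoneda turns the pushouts into pullbacks of Hom-functors), matched against the pullback description of $ZW_r^{p,q}$ from \cref{L:pullback,P:pullbackfunctors}, with part (\ref{ldr}) immediate from the direct-sum definition of $\Bbw^n_r$. You simply unpack the Yoneda step explicitly on generators, which the paper leaves as "an easy check."
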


\begin{proof}
  The case $r=0$ in part (\ref{ls}) is immediate from the definition of $\Zzw^n_0(p,q) = \mathbb{D}^n(p,q)$.
  For $r \geq 1$ we proceed inductively: assume $ZW_{r-1}^{p,q} = \ncpx(\Zzw^n_{r-1}(p,q),-)$ as functors $\ncpx \to \kk\Mod$.
  It is an easy check that the $n$-multicomplex morphisms $d_0^*$ and $D_{r-1}^*$ correspond to the natural transformations $d_0$ and $D_{r-1}$ in \cref{P:pullbackfunctors} under the Yoneda embedding $\Yy \colon \ncpx \to \Fun(\ncpx,\kk\Mod)^{\op}$. 
  Furthermore, $\Yy$ takes pushout squares in $\ncpx$ to pullback squares in $\Fun(\ncpx,\kk\Mod)$, hence $ZW_r^{p,q} = \ncpx(\Zzw^n_r(p,q),-)$ by \cref{P:pullbackfunctors}.
  Part (\ref{ldr}) is now immediate from the definition of $BW_{r}\bgd{p}{q}(A)$.
\end{proof}

Lastly, for $r \ge 0$, define $\iota_r \colon \Zzw^n_r(p,q) \to \Bbw^n_r(p,q-1)$ to be the $n$-multicomplex morphism corresponding to the natural transformation $w_{r} \colon BW_{r}\bgd{p}{q-1} \rightarrow ZW_{r}\bgd{p}{q}$ under the Yoneda embedding $\Yy$.
Under these correspondences, a commutative diagram of $n$-multicomplexes of the form
\[\label{ldiag}\xymatrix{
    \Zzw^n_r(p,q)\ar[d]_{\iota_r}\ar[r]&A\ar[d]^f\\
    \Bbw^n_r(p,q-1)\ar[r]_{}&B
}\]
corresponds to a pair $(a,b)$, $a\in ZW_{r}^{p,q}(A)$, $b\in BW_{r}^{p,q-1}(B)$ such that $ZW_r(f)(a)=w_{r}(b)$.

\medskip
The following two results will be useful for constructing our model category structures. 

\begin{lemma}\label{lemretr} Let $2\leq n\leq\infty$. 
  For $r\geq 1$ the $n$-multicomplex $\Zzw^n_0(p,q-1)$ is a retract of $\Bbw^n_r(p,q-1)$ and for $r\geq 2$ the $n$-multicomplex $\Zzw^{n}_{r-1}(p-1,q-1)$ is a retract of $\Bbw^n_r(p,q-1)$.
\end{lemma}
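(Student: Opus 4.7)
The plan is to exploit the explicit direct sum decomposition of $\Bbw^n_r(p,q-1)$ built into its definition. For $r=1$ the statement only concerns $\Zzw^n_0(p,q-1)$, and by definition $\Bbw^n_1(p,q-1) = \mathbb{D}^n(p,q-1) = \Zzw^n_0(p,q-1)$, so the identity morphism trivially exhibits the required retract.

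For $r \ge 2$, I would use
\[
  \Bbw^n_r(p,q-1) \;=\; \Zzw^n_{r-1}(p+r-1, q+r-2) \,\oplus\, \mathbb{D}^n(p,q-1) \,\oplus\, \Zzw^n_{r-1}(p-1,q-1)
\]
together with the general fact that in an additive category each summand of a biproduct is a retract of the whole. The category $\ncpx$ is additive: direct sums of bigraded $\kk$-modules inherit componentwise differentials $d_i$, and the canonical inclusions and projections are strict morphisms. Thus the coordinate inclusion and projection associated to the middle summand $\mathbb{D}^n(p,q-1) = \Zzw^n_0(p,q-1)$ give a retract diagram, and likewise for the third summand $\Zzw^n_{r-1}(p-1,q-1)$, yielding both claims.

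There is no real obstacle here; the statement reduces to checking that the formula defining $\Bbw^n_r$ is a direct sum in $\ncpx$ (not merely in bigraded $\kk$-modules). This is immediate since the structure maps $d_i$ on each summand act independently. The only thing one might want to remark is that these retracts produce, under the Yoneda correspondence of Lemma~\ref{reval}, the natural projections $BW_r^{p,q-1}(A) \to A^{p,q-1}$ and $BW_r^{p,q-1}(A) \to ZW^{p-1,q-1}_{r-1}(A)$ onto the corresponding factors, which is the form in which these retracts will be used later.
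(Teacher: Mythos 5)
Your proposal is correct and is essentially the paper's own argument: the paper's proof simply says the lemma is immediate from the definition of $\Bbw^n_r(p,q-1)$, i.e.\ the summands of the defining direct sum (with $\Bbw^n_1(p,q-1)=\mathbb{D}^n(p,q-1)=\Zzw^n_0(p,q-1)$ in the $r=1$ case) are retracts via the coordinate inclusions and projections, exactly as you spell out.
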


\begin{proof}
  Immediate from the definition of $\Bbw_r^{p,q}$.
\end{proof}

\begin{lemma}\label{lempush}
  Let $2\leq n\leq\infty$. For $r\geq 1$, the diagram
  \[
    \xymatrix{
      \Zzw^n_r(p,q)\ar[d]_{\iota_r}\ar[r]&0\ar[d]\\
  \Bbw^n_r(p,q-1)\ar[r]&\Zzw^n_r(p+r-1,q+r-2) } \]
  is a pushout diagram in $n$-multicomplexes. 
\end{lemma}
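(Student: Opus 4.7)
The plan is to verify the universal property of the pushout using Yoneda. Since $\ncpx$ is cocomplete, the pushout $P$ of the span $0 \leftarrow \Zzw^n_r(p,q) \xrightarrow{\iota_r} \Bbw^n_r(p,q-1)$ exists, and I propose to identify $P$ with $\Zzw^n_r(p+r-1,q+r-2)$ by matching the functors they represent, in the spirit of the proof of Lemma \ref{reval}.

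First I would unwind the universal property of the pushout: for any $n$-multicomplex $A$, a morphism $P \to A$ corresponds to a morphism $\Bbw^n_r(p,q-1) \to A$ whose precomposition with $\iota_r$ is zero. Under the identifications of Lemma \ref{reval}, such a morphism is an element $b \in BW_r^{p,q-1}(A)$, and since $\iota_r$ is defined as the morphism corresponding under Yoneda to the natural transformation $w_r$, the vanishing condition translates to $w_r(b)=0$. This gives a natural isomorphism
\[
\ncpx(P,A) \;\cong\; \ker\bigl(w_r(A) \colon BW_r^{p,q-1}(A) \to ZW_r^{p,q}(A)\bigr).
\]

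Next, I would invoke Lemma \ref{L:kerwr} to identify this kernel with $ZW_r^{p+r-1,q+r-2}(A)$ via $(\underline{b};a;\underline{c}) \mapsto (\underline{b},a)$, and then apply Lemma \ref{reval} once more to rewrite this as $\ncpx(\Zzw^n_r(p+r-1,q+r-2),A)$. The Yoneda lemma then delivers $P \cong \Zzw^n_r(p+r-1,q+r-2)$, and the map from $\Bbw^n_r(p,q-1)$ in the pushout diagram corresponds, through the chain of isomorphisms, to the canonical inclusion $\ker(w_r) \hookrightarrow BW_r^{p,q-1}$.

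Because all the substantive calculation has already been performed in Lemma \ref{L:kerwr}, I anticipate no genuine obstacle; the argument reduces to a purely formal Yoneda manipulation. The only point requiring care is keeping the contravariance of the Yoneda embedding $\Yy$ straight, so that the reversal between $\iota_r$ and $w_r$ is correctly applied and the induced map in the pushout is identified unambiguously.
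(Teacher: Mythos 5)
Your argument is correct and is essentially the paper's own proof: the paper notes that, by Lemma~\ref{L:kerwr}, the square with $ZW_r^{p+r-1,q+r-2}$, $BW_r^{p,q-1}$, $0$ and $ZW_r^{p,q}$ is a pullback in $\Fun(\ncpx,\kk\Mod)$ and then invokes Yoneda, which is precisely the universal-property/representability manipulation you spell out. Your explicit unwinding (morphisms out of the pushout $=$ morphisms out of $\Bbw^n_r(p,q-1)$ killing $\iota_r$ $=$ $\ker w_r(A)$ $\cong ZW_r^{p+r-1,q+r-2}(A)$) is just the same argument written in more detail, with the contravariance handled correctly.
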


\begin{proof}
  By \cref{L:kerwr}, the following diagram is a pullback square in the functor category $\Fun(\ncpx,\kk\Mod)$ for $r \ge 1$.
  \begin{center}
    \begin{tikzcd}
      ZW_r^{p+r-1,q+r-2} \arrow[r] \arrow[d] & BW_r^{p,q-1} \arrow[d,"w_r"] \\
      0 \arrow[r] & ZW_r^{p,q}
    \end{tikzcd}
  \end{center}
  The result now follows by Yoneda's lemma.
\end{proof}

\subsection{Model category structures}\label{S:model}

In this section, we present the model structures on $n$-multicomplexes. We are able to exploit the $r$-cone defined for the case of bicomplexes.

We denote by $C_r$ the bicomplex $\Zzw_r^2(0,0)$. We recall from~\cite{celw} that for $r=0$ it is depicted as a square, and for $r\geq 1$, it is depicted as a staircase graph with $r$ horizontal steps as follows, where each vertex marked $\kkpic$ represents $\kk$, each arrow represents the identity map and the vertex marked $\kkpqpic$ represents $\kk$ in bidegree $(0,0)$.
\[
  \begin{tikzcd}[cramped,sep=scriptsize]
                 &                  &                                        & \kkpic                     & \kkpqpic \arrow[l] \\
                 &                  & \kkpic                                 & \kkpic \arrow[u] \arrow[l] &                   \\
                 &                  &  \kkpic \arrow[u] \arrow[ld, no head, dotted] &                            &                   \\
\kkpic           & \kkpic \arrow[l] &                                        &                            &                   \\
\end{tikzcd}
\]

We may write $C_r=\bigoplus_{k=0}^{r-1} \kk \beta_{-k,-k}\oplus\bigoplus_{k=0}^{r-1} \kk\beta_{-k-1,-k}$ with the differentials $d_0,d_1$ indicated by the graph, and $\beta_{i,j}$ a generator of bidegree $(i,j)$.
Since $C_r$  is a bicomplex, we may also view it as an $n$-multicomplex, for $2\leq n\leq\infty$.
Then for any $n$-multicomplex $A$, with $2\leq n\leq\infty$, we have that $C_r\otimes A$ is an $n$-multicomplex, using the symmetric monoidal structure on $\ncpx$.

\begin{prop}\label{Crtrivial}  
  Let $2\leq n\leq \infty$. Let $A$ be an $n$-multicomplex and $r\geq 0$. Then $E_{r+1}(C_r\otimes A)=0$.
\end{prop}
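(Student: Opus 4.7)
The plan is to prove the stronger statement that $C_r$ is $r$-contractible, i.e.\ that $1_{C_r}\simeq_r 0$, and then transport the $r$-homotopy through the tensor product to obtain an $r$-contraction of $C_r\otimes A$. Once this is done, the fact recorded just after \cref{D:rhomotopy}, that every $r$-homotopy equivalence is an $E_r$-quasi-isomorphism, applied to $1_{C_r\otimes A}$ and $0$, forces the identity on $E_{r+1}(C_r\otimes A)$ to coincide with the zero map, so that $E_{r+1}(C_r\otimes A)=0$.

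For the $r$-contractibility of $C_r$ itself, I would construct an explicit $r$-homotopy $h=\{h_m\}_{m\geq 0}$ directly from the staircase description. Because $C_r$ is a bicomplex, the defining relations of \cref{D:rhomotopy} only involve $d_0$ and $d_1$, and the bidegree constraint $(-m+r,-m+r-1)$ on $h_m$ combined with the finite list of generators $\{\beta_{-k,-k},\beta_{-k-1,-k}\}_{0\le k\le r-1}$ essentially pins down each component: $h_m$ must send a staircase generator at $(p,q)$ to the unique staircase generator at $(p+r-m,q+r-m-1)$, or to zero if no such generator exists. Verifying the equations then reduces, via the identities $d_0\beta_{-k-1,-k-1}=\beta_{-k-1,-k}=d_1\beta_{-k,-k}$ (up to signs) together with $d_0d_1=d_1d_0$, to a finite combinatorial check.

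To pass from $C_r$ to $C_r\otimes A$, I would set $\widetilde h_m=h_m\otimes 1_A$. With the tensor differentials $d_i^{C_r\otimes A}=d_i^{C_r}\otimes 1+1\otimes d_i^A$, the defining equation for $\widetilde h$ at level $m$ splits as the defining equation for $h$ at level $m$ tensored with $1_A$, plus cross-terms pairing $h_j\otimes 1_A$ with $1\otimes d_i^A$. The cross-terms cancel in pairs by the graded commutation of $h_j\otimes 1_A$ with $1\otimes d_i^A$ and by the sign pattern built into \cref{D:rhomotopy}, so the surviving $C_r$-part gives $1_{C_r\otimes A}$ when $m=r$ and zero otherwise, exactly as required.

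The main obstacle I anticipate is the sign and boundary bookkeeping in defining $h$ on $C_r$, especially at the outermost generators $\beta_{0,0}$ and $\beta_{-r,-r+1}$, where the prescribed target of the shift lies outside the staircase and the corresponding $h_m$ must therefore vanish; these boundary conditions have to remain compatible with the full list of defining relations. Once the $h_m$ are pinned down on $C_r$, both their verification and the transfer to $C_r\otimes A$ are routine.
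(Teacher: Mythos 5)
Your overall route is the same as the paper's: the paper's proof of \cref{Crtrivial} simply cites \cite[Proposition 4.29]{celw} for the $r$-contractibility of $C_r$, observes that tensoring the contraction with $1_A$ makes $C_r\otimes A$ $r$-contractible, and concludes via \cite[Proposition 3.24]{celw18}. Your second and third steps are sound: the cross-terms do cancel, since $\langle d_i,h_j\rangle\equiv i+r-j-1\pmod 2$ gives $(-1)^{i+r}(-1)^{\langle d_i,h_j\rangle}=-(-1)^{j}$, and once $1_{C_r\otimes A}\simeq_r 0$ the zero map is an $r$-homotopy self-equivalence of $C_r\otimes A$, so $E_{r+1}(C_r\otimes A)$ has zero identity and vanishes. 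Thus the only place where you add content beyond what the paper cites is the explicit contraction of $C_r$, and that is where your proposal has a genuine flaw.

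The defining relations of \cref{D:rhomotopy} do \emph{not} pin $h_m$ down to ``the unique staircase generator in the shifted bidegree, whenever one exists''; they force several such components to vanish, and your rule contradicts this for every $r\geq 2$. Take $r=2$, so $C_2$ has generators $\beta_{0,0},\beta_{-1,0},\beta_{-1,-1},\beta_{-2,-1}$ with $d_1\beta_{0,0}=\beta_{-1,0}=d_0\beta_{-1,-1}$ and $d_1\beta_{-1,-1}=\beta_{-2,-1}$. Your rule prescribes $h_2(\beta_{-1,0})=\pm\beta_{-1,-1}$, since $\beta_{-1,-1}$ occupies the target bidegree $(-1,-1)$. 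But the relation at level $m=3$ reduces (as $d_i=0=h_i$ elsewhere) to $h_2d_1=d_1h_2$, and evaluating at $\beta_{-1,0}$ forces $d_1h_2(\beta_{-1,0})=0$, whereas $d_1\beta_{-1,-1}=\beta_{-2,-1}\neq 0$; equivalently, the level $m=r=2$ relation evaluated first at $\beta_{0,0}$ and then at $\beta_{-1,0}$ forces $d_0h_2(\beta_{-1,0})=0$ while $d_0\beta_{-1,-1}\neq 0$. Hence $h_2(\beta_{-1,0})=0$ is forced, and with your prescription the relations fail; the same spurious components $h_m(\beta_{-1,0})$ for $r\leq m\leq 2r-2$ arise for every $r\geq 2$. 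The actual contraction (the content of \cite[Proposition 4.29]{celw}) is supported in homotopy degrees $m\leq r-1$ only; for $r=2$ it is $h_0(\beta_{-2,-1})=\beta_{0,0}$, $h_1(\beta_{-1,0})=\beta_{0,0}$, $h_1(\beta_{-2,-1})=\beta_{-1,-1}$, with all other components zero. So your plan is salvageable, but as written the proposed homotopy does not satisfy the defining relations, and the ``pinned down'' formula needs to be replaced by the correct support condition before the rest of your argument can be run.
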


\begin{proof}  
  Proposition 4.29 of \cite{celw} proves that $C_r$ is $r$-contractible in the sense that the identity map of $C_r$ is $r$-homotopic (in the category of bicomplexes but also in that of  multicomplexes) to 0. 
  As a corollary, for any multicomplex (and thus for any $n$-multicomplex $A$), $C_r\otimes A$ is $r$-contractible, hence by~\cite[Proposition 3.24]{celw18},
  $E_{r+1}(C_r\otimes A)=0$.
\end{proof}

\begin{prop}\label{coneetfib} 
  Let $p,q\in\Z$ and $2\leq n\leq \infty$. Let $A$ be an $n$-multicomplex  and $r\geq 0$. The projection morphism $\phi_r:C_r\otimes A\rightarrow A$  has the property that $ZW^{p,q}_k(\phi_r)$ is surjective for $0\leq k\leq r$. 
\end{prop}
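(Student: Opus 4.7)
The plan is to exhibit, for each $(a_0,\dots,a_{k-1})\in ZW_k^{p,q}(A)$ with $0\le k\le r$, an explicit preimage in $ZW_k^{p,q}(C_r\otimes A)$ under $ZW_k(\phi_r)$. The case $k=0$ is immediate, since $\phi_r$ admits the set-theoretic section $a\mapsto\beta_{0,0}\otimes a$, so the substantive range is $1\le k\le r$ (in particular $r\ge 1$).

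The key observation is that the diagonal tuple
\[
\mathbf{c}:=(\beta_{0,0},\beta_{-1,-1},\dots,\beta_{-(r-1),-(r-1)})
\]
itself lies in $ZW_r^{0,0}(C_r)$. Indeed, the staircase description shows that the only nonzero values of $d_i$ on diagonal generators are $d_0\beta_{-j,-j}=\beta_{-j,1-j}=d_1\beta_{-(j-1),-(j-1)}$ for $j\ge 1$ (with $d_i=0$ for $i\ge 2$, since $C_r$ is a bicomplex), so the relation $\sum_{i+j=l}(-1)^i d_i\beta_{-j,-j}=0$ collapses to the cancellation $\beta_{-l,1-l}-\beta_{-l,1-l}=0$ for each $0\le l\le r-1$.

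Given this, I would define the candidate lift by the shuffle-type formula
\[
\tilde a_m:=\sum_{s+t=m}\beta_{-s,-s}\otimes a_t\in(C_r\otimes A)^{p-m,q-m},\qquad 0\le m\le k-1,
\]
which is well defined because the hypothesis $k\le r$ forces $s\le m\le r-1$, so every $\beta_{-s,-s}$ occurring really is a generator of $C_r$. Since $\phi_r(\beta_{0,0}\otimes a)=a$ while $\phi_r$ vanishes on $\beta_{-s,-s}\otimes a$ for $s\ge 1$, one reads off $\phi_r(\tilde a_m)=a_m$ at once. The nontrivial check is then that $(\tilde a_0,\dots,\tilde a_{k-1})\in ZW_k^{p,q}(C_r\otimes A)$: expanding $\sum_{i+j=l}(-1)^i d_i\tilde a_j$ via the Leibniz rule $d_i(\beta\otimes a)=d_i\beta\otimes a+\beta\otimes d_ia$ and regrouping over triples $(i,s,t)$ with $i+s+t=l$ yields
\[
\sum_t\Bigl(\sum_{i+s=l-t}(-1)^i d_i\beta_{-s,-s}\Bigr)\otimes a_t\;+\;\sum_s\beta_{-s,-s}\otimes\Bigl(\sum_{i+t=l-s}(-1)^i d_ia_t\Bigr).
\]
The main obstacle is the bookkeeping at this final step: one has to verify that the hypothesis $0\le l\le k-1\le r-1$ is precisely what forces every index $l-t$ and $l-s$ into the range where the cycle relations on $\mathbf{c}$ and on $(a_0,\dots,a_{k-1})$ respectively apply. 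Once this is organized, both pieces vanish termwise, so the tuple is a witness $k$-cycle lifting $(a_0,\dots,a_{k-1})$, and $ZW_k^{p,q}(\phi_r)$ is surjective.
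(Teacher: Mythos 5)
Your lift $\tilde a_m=\sum_{s+t=m}\beta_{-s,-s}\otimes a_t$ is exactly the element $X_m$ in the paper's proof, and your verification (pairing the canonical diagonal witness cycle of $C_r$ with the given witness cycle of $A$ and checking the relations termwise) is essentially the same computation, so the argument is correct and follows the paper's approach. One small caveat: with the paper's conventions the tensor differential carries a Koszul sign, $d_i(\beta\otimes a)=d_i\beta\otimes a+(-1)^{\langle d_i,\beta\rangle}\beta\otimes d_ia$, so your regrouped second sum should come with a factor $(-1)^s$ in front of $\beta_{-s,-s}\otimes(\cdots)$; this does not affect the conclusion, since each inner sum vanishes on its own.
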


\begin{proof} The case $r=0$ is trivial. Let us assume $r\geq 1$.  
  Let $(a_0,a_1,\ldots,a_{r-1})$ be an element of $ZW_r^{p,q}(A)$, with $a_i\in A^{p-i,q-i}$.
  We have

  \[\sum_{i+j=l} (-1)^id_ia_j = 0~\text{for}~0\leq l\leq r-1.\]

  For $0\leq k\leq r-1$, we define the element

  \[X_k=\sum_{i=0}^k \beta_{-i, -i}\otimes a_{k-i}  \in (C_r\otimes A)^{p-k,q-k}.\]

  Let us prove that  $(X_0,\ldots,X_{r-1})$ is an element of $ZW_r(C_r\otimes A).$ Fix $0\leq l\leq r-1$ and compute
  \begin{align*}
    \sum_{i=0}^l (-1)^id_iX_{l-i}
    = & \sum_{i=0}^l (-1)^id_i \big(\sum_{j=0}^{l-i} \beta_{-j,-j}\otimes a_{l-i-j}\big) \\
    =& \sum_{j=0}^{l} (d_0\beta_{-j,-j})\otimes a_{l-j}-\sum_{j=0}^{l-1} (d_1\beta_{-j,-j})\otimes a_{l-1-j}\\
    &\quad+\sum_{j=0}^l {(-1)^{j}}\beta_{-j,-j}\otimes \big(\sum_{i=0}^{l-j} (-1)^{i} d_ia_{l-i-j}\big)\\
    =& \sum_{j=1}^{l} \beta_{-j,-j+1}\otimes a_{l-j}-\sum_{j=0}^{l-1} \beta_{-j-1,-j}\otimes a_{l-1-j}=0.
  \end{align*}
  Hence, the induced map $ZW_r(\phi_r)$ on $ZW_r(C_r\otimes A)$ satisfies
  \[
    ZW_r(\phi_r)(X_0,\ldots,X_{r-1})=(a_0, \dots, a_{r-1}).
  \]
  Note that since $(X_0, \dots, X_k)\in ZW_k(C_r\otimes A)$ is defined from the data $(a_0,\ldots,a_k)$, the same proof applies to $ZW_k(\phi_r)$,
  for $0\leq k\leq r$. 
\end{proof}

\begin{rem}\label{R:crinfty} Let $C_r^\infty$ be the multicomplex $\kk e_{0,0}\oplus\kk e_{-r,1-r}$ with only non trivial differential $d_r(e_{0,0})=e_{-r,1-r}$.
  We have that $C_r^\infty$ is an $r$-contractible multicomplex, with $h_0(e_{-r,1-r})=e_{0,0}$ satisfying $d_rh_0+h_0d_r=1_{C_r^\infty}$. Hence, for any multicomplex $Y$, $C_r^\infty\otimes Y$ is $r$-contractible.
  In addition the projection $\pi \colon C_r^\infty\otimes Y\rightarrow Y$ induced by the projection of $C_r^\infty$ onto $\kk e_{0,0}$ satisfies $ZW_s(\pi)$ is surjective for all $0\le s\leq r$: 
  it is easy to see that if $(a_0,\ldots,a_{s-1})\in ZW_s(Y)$ then $(e_{0,0}\otimes a_0,\ldots, e_{0,0}\otimes a_{s-1})\in ZW_s(C_r^\infty\otimes Y)$.
\end{rem}

\begin{defn}\label{defIrJr} Let $2\leq n\leq\infty$.
  For $r\geq 0$, consider the sets of morphisms of $n$-multicomplexes
  \[I^n_r=\left\{\xymatrix{\Zzw^n_{r+1}(p,q)\ar[r]^-{\iota_{r+1}} & \Bbw^n_{r+1}(p,q-1)}\right\}_{\substack{p,q\in\Z}} 
    \text{ and } 
  J^n_r=\left\{\xymatrix{0\ar[r]& \Zzw^n_{r}(p,q)}\right\}_{\substack{p,q\in\Z}}.\]
\end{defn}

\begin{prop}\label{r-Hovey-234} For each $r\geq 0$, a map $f$ is $J^n_r$-injective if and only if $ZW_r(f)$ is surjective.
\end{prop}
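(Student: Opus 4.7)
The plan is to unpack the definition of $J^n_r$-injectivity and apply the representability result from \cref{reval} directly.

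First, I would unpack what $J^n_r$-injectivity means concretely. A morphism $f \colon A \to B$ is $J^n_r$-injective if and only if, for every $p,q \in \Z$ and every commutative square
\[
\xymatrix{
0 \ar[r] \ar[d] & A \ar[d]^{f} \\
\Zzw^n_r(p,q) \ar[r]_-{g} \ar@{-->}[ur] & B
}
\]
a diagonal lift exists. Since the upper-left corner is $0$, the commutativity condition is vacuous, and such a lift is nothing more than a morphism $\tilde g \colon \Zzw^n_r(p,q) \to A$ with $f \circ \tilde g = g$.

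Next, I would invoke the representability statement in \cref{reval}(\ref{reval1}): morphisms $\Zzw^n_r(p,q) \to X$ correspond naturally to elements of $ZW_r^{p,q}(X)$, and this bijection is functorial. Under this correspondence, $g$ corresponds to some $b \in ZW_r^{p,q}(B)$, a lift $\tilde g$ corresponds to an element $a \in ZW_r^{p,q}(A)$, and the condition $f \circ \tilde g = g$ translates to $ZW_r(f)(a) = b$. Hence the existence of a lift for every $g$ is equivalent to the surjectivity of the map
\[
ZW_r(f) \colon ZW_r^{p,q}(A) \longrightarrow ZW_r^{p,q}(B)
\]
for that particular $(p,q)$.

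Finally, quantifying over all $p,q \in \Z$ and recalling that $ZW_r(f)$ denotes the collection of these maps across all bidegrees, the $J^n_r$-injectivity of $f$ is equivalent to $ZW_r(f)$ being surjective, as required. There is no real obstacle here — the work has already been done in establishing \cref{reval}, and this proposition is essentially a Yoneda-type reformulation of the lifting property in terms of the representing objects $\Zzw^n_r(p,q)$.
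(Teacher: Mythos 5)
Your proposal is correct and is essentially the paper's own argument: the paper's proof simply cites Lemma~\ref{reval}(\ref{reval1}), and your write-up just makes explicit the unpacking of the lifting property against $0\to\Zzw^n_r(p,q)$ and the Yoneda-type translation into surjectivity of $ZW_r^{p,q}(f)$ for all $p,q$. No gaps; nothing further is needed.
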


\begin{proof} This follows from $(\ref{reval1})$ of Lemma \ref{reval}.
\end{proof} 

\begin{prop}\label{trivfib_r}
  For all $r\geq 0$ and $2\leq n\leq \infty$, we have $\cl{I^n_r}{inj}= \Ee^n_r\cap \cl{J^n_0}{inj}\cap \cl{J^n_r}{inj}$.
\end{prop}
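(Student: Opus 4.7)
The strategy is to prove both inclusions using the representability statements of \cref{reval}, the pushout of \cref{lempush} and the kernel description of \cref{L:kerwr}. By \cref{reval}, a lifting problem against $\iota_{r+1} \colon \Zzw^n_{r+1}(p,q) \to \Bbw^n_{r+1}(p,q-1)$ is equivalent to a compatible pair $(a,b)$ with $a \in ZW_{r+1}^{p,q}(A)$, $b \in BW_{r+1}^{p,q-1}(B)$ and $ZW_{r+1}(f)(a) = w_{r+1}(b)$, and a lift corresponds to $c \in BW_{r+1}^{p,q-1}(A)$ satisfying $w_{r+1}(c) = a$ and $BW_{r+1}(f)(c) = b$. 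I will use this translation throughout.

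For the inclusion $\cl{I^n_r}{inj} \subseteq \Ee^n_r \cap \cl{J^n_0}{inj} \cap \cl{J^n_r}{inj}$, I would first observe that \cref{lempush} at subscript $r+1$ exhibits $0 \to \Zzw^n_{r+1}(p+r,q+r-1)$ as a pushout of $\iota_{r+1}$, so any $I^n_r$-injective $f$ has $ZW_{r+1}(f)$ surjective. To show $f$ itself is surjective, for $y \in B^{p,q-1}$ I would check via the multicomplex relations that $(d_0 y, d_1 y, \ldots, d_r y) = w_{r+1}(0;y;0)$ lies in $ZW_{r+1}^{p,q}(B)$; lifting this to some $a \in ZW_{r+1}^{p,q}(A)$ and applying the lifting property to the pair $(a,(0;y;0))$ gives a lift $c$ whose middle coordinate $x \in A^{p,q-1}$ satisfies $f(x) = y$. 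Similarly, for $(b_0,\ldots,b_{r-1}) \in ZW_r^{p,q}(B)$, I would use $(0,b_0,\ldots,b_{r-1}) = w_{r+1}(0;0;b_0,\ldots,b_{r-1}) \in ZW_{r+1}^{p+1,q+1}(B)$, lift this to $a \in ZW_{r+1}^{p+1,q+1}(A)$, and apply the lifting property; the third coordinate of the resulting $c$ lies in $ZW_r^{p,q}(A)$ and maps under $ZW_r(f)$ to $(b_0,\ldots,b_{r-1})$. Finally, $E_{r+1}(f)$ is surjective because $ZW_{r+1}(f)$ is, and it is injective because for any $[a]$ mapping to zero, $ZW_{r+1}(f)(a) = w_{r+1}(b)$ for some $b$, and $I^n_r$-injectivity yields $c$ with $a = w_{r+1}(c)$, so $[a] = 0$.

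For the reverse inclusion, I would assume $f$ is surjective, $ZW_r(f)$ is surjective, and $E_{r+1}(f)$ is an isomorphism, and upgrade this via \cref{mainobs} applied with $r$ replaced by $r+1$ to obtain that $ZW_{r+1}(f)$ is also surjective. Given a compatible pair $(a,b)$, injectivity of $E_{r+1}(f)$ forces $[a] = 0$ in $E_{r+1}^{p,q}(A)$ since $[w_{r+1}(b)]=0$; hence $a = w_{r+1}(c_1)$ for some $c_1 \in BW_{r+1}^{p,q-1}(A)$. The discrepancy $b - BW_{r+1}(f)(c_1)$ vanishes after applying $w_{r+1}$ and so lies in $\ker w_{r+1}$, which by \cref{L:kerwr} is isomorphic to $ZW_{r+1}^{p+r,q+r-1}(B)$; surjectivity of $ZW_{r+1}(f)$ lets me lift this discrepancy to $c_2 \in \ker w_{r+1} \subset BW_{r+1}^{p,q-1}(A)$, and $c = c_1 + c_2$ is the required lift.

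The main obstacle lies in the $\subseteq$ direction: extracting surjectivity of $f$ and of $ZW_r(f)$ from a single lifting property at level $r+1$ requires engineering the specific lifting squares above, so that the lift exposes the desired preimage as one of its coordinates. The key identifications $w_{r+1}(0;y;0) = (d_0 y,\ldots,d_r y)$ and $w_{r+1}(0;0;b_0,\ldots,b_{r-1}) = (0,b_0,\ldots,b_{r-1})$, together with the multicomplex relations ensuring these tuples lie in the appropriate $ZW_{r+1}$, are the heart of the argument. In the $\supseteq$ direction the essential idea is the two-stage assembly $c = c_1 + c_2$, with $c_1$ coming from $E_{r+1}$-injectivity and the correction $c_2$ from \cref{L:kerwr} combined with $ZW_{r+1}$-surjectivity.
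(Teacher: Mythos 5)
Your argument is correct and is essentially the argument the paper intends: the paper's proof is just a pointer to the bicomplex case \cite[Proposition 4.35]{celw}, to be carried out with Lemmas~\ref{L:kerwr}, \ref{reval}, \ref{lemretr}, \ref{lempush} and Remark~\ref{mainobs}, and your write-up reconstructs exactly that argument (using the pushout of Lemma~\ref{lempush} for surjectivity of $ZW_{r+1}(f)$, explicit squares built from $w_{r+1}(0;y;0)$ and $w_{r+1}(0;0;\underline{b})$ for surjectivity of $f$ and $ZW_r(f)$, and the $c_1+c_2$ assembly via Lemma~\ref{L:kerwr} and Remark~\ref{mainobs} for the converse). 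The only cosmetic difference is that you extract the relevant coordinates of the lift directly rather than phrasing this via the retractions of Lemma~\ref{lemretr}, which is the same content.
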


\begin{proof}  
The proof proceeds exactly like that of~\cite[Proposition 4.35]{celw}, the corresponding result in the bicomplex case $n=2$, using Lemmas~\ref{L:kerwr}, \ref{reval}, \ref{lemretr},
\ref{lempush} and Remark~\ref{mainobs}.
\end{proof}

\begin{prop}\label{Jr-cof} 
  For all $r\geq 0$, $2\leq n\leq \infty$ and all $0\leq k\leq r$ we have $\cl{J^n_k}{cof}\subseteq \Ee^n_r$.
\end{prop}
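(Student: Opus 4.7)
The plan is to prove $\cl{J^n_k}{cof}\subseteq \Ee^n_r$ by the standard cofibrantly generated argument. By the small object argument, every element of $\cl{J^n_k}{cof}$ is a retract of a relative $J^n_k$-cell complex, i.e.\ a transfinite composition of pushouts of maps in $J^n_k$. Since $\Ee^n_r$ is closed under retracts (as noted in \cref{S:preliminaries}), it suffices to prove that relative $J^n_k$-cell complexes lie in $\Ee^n_r$.

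Every map in $J^n_k$ has source $0$, so any pushout of such a map is a coproduct inclusion $A\to A\oplus \Zzw^n_k(p,q)$, and a transfinite composition of these is again an inclusion of $A_0$ into a direct sum $A_0\oplus \bigoplus_\alpha \Zzw^n_k(p_\alpha,q_\alpha)$. Since the functor $E_{r+1}$ preserves direct sums and filtered colimits of $\kk$-modules, such a map is an $E_r$-quasi-isomorphism if and only if $E_{r+1}(\Zzw^n_k(p,q))=0$ for every $p,q\in\Z$. Because each page of the spectral sequence is a subquotient of the previous one, it is enough to verify
\[
E_{k+1}(\Zzw^n_k(p,q))=0.
\]

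The main step is to realise $\Zzw^n_k(p,q)$ as a retract in $\ncpx$ of $C_k\otimes \Zzw^n_k(p,q)$; the conclusion then follows from \cref{Crtrivial}, since $E_{k+1}$ preserves retractions. To produce the retraction one only needs to split the projection $\phi_k\colon C_k\otimes \Zzw^n_k(p,q)\to \Zzw^n_k(p,q)$. Via the Yoneda correspondence of \cref{reval}, a section of $\phi_k$ is the same datum as a preimage of the universal element $(a_0,\dots,a_{k-1})\in ZW_k^{p,q}(\Zzw^n_k(p,q))$ --- the one corresponding to $\id_{\Zzw^n_k(p,q)}$ --- under the map $ZW_k^{p,q}(\phi_k)$. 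Such a preimage exists precisely because \cref{coneetfib} asserts the surjectivity of $ZW_k^{p,q}(\phi_k)$ for $0\le k\le r$. This last point is the real substance of the argument, and is the reason why \cref{coneetfib} and \cref{Crtrivial} have been set up together; the remainder of the proof is formal manipulation within the cofibrantly generated framework.
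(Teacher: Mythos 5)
Your argument is correct, but it is organised quite differently from the paper's proof. The paper does not pass through the small object argument at all: given an arbitrary $f\colon X\to Y$ in $\cl{J^n_k}{cof}$, it lifts $f$ directly against the map $X\oplus(C_r\otimes Y)\to Y$, which is $J^n_k$-injective by \cref{coneetfib} and \cref{r-Hovey-234}, and then applies $E_{r+1}$ using $E_{r+1}(C_r\otimes Y)=0$ from \cref{Crtrivial} to see that $E_{r+1}(f)$ is an isomorphism. You instead reduce to relative cell complexes, identify these (since the maps in $J^n_k$ have source $0$) as direct-sum inclusions $A_0\to A_0\oplus\bigoplus_\alpha\Zzw^n_k(p_\alpha,q_\alpha)$, and then prove $E_{k+1}(\Zzw^n_k(p,q))=0$ by exhibiting $\Zzw^n_k(p,q)$ as a retract of $C_k\otimes\Zzw^n_k(p,q)$; the section is produced by representability (\cref{reval}) from the surjectivity of $ZW_k(\phi_k)$ in \cref{coneetfib}. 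Both proofs hinge on exactly the same two inputs (\cref{coneetfib} and \cref{Crtrivial}), and your retraction is in essence the paper's lifting square applied cell by cell rather than globally to $Y$; what your route buys is the explicit vanishing $E_i(\Zzw^n_k(p,q))=0$ for $i\geq k+1$, which the paper only records later (as a corollary of \cref{cofZW} and \cref{Qequiv}), while what it costs is a handful of routine facts you assert without proof and should at least flag: that $E_{r+1}$ commutes with arbitrary direct sums of $n$-multicomplexes, that transfinite compositions of these pushouts are computed on underlying bigraded modules (so the cell complex really is a direct-sum inclusion), and smallness of the domains (here trivial, as they are $0$, and already noted in the paper) so that the retract reduction via the small object argument applies. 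The paper's argument is shorter precisely because it needs only additivity of $E_{r+1}$ on a single binary direct sum and no cellular decomposition.
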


\begin{proof} Let $r\geq 0$ and $0\leq k\leq r$ and $f \colon X\rightarrow Y\in \cl{J^n_k}{cof}$.
  Consider the following diagram.
  \[
    \xymatrix{ X\ar[r]^-{\vmat{1_X}{0}}\ar[d]_f & X\oplus (C_r\otimes Y)\ar[d]^{\hmat{f}{\phi_r}}\\ Y\ar[r]^{=} & Y}
  \]

  From Propositions~\ref{coneetfib} and~\ref{r-Hovey-234} the right-hand vertical map is $J^n_k$-injective so there is a lift in the diagram. From  Proposition~\ref{Crtrivial} one has $E_{r+1}(C_r\otimes Y)=0$.
  Applying the functor $E_{r+1}$ to the diagram, we see that $E_{r+1}(f)$ is an isomorphism.  
  Note that in case $n=\infty$ the proof also holds using  $C_r^\infty$ (instead of $C_r$) and Remark \ref{R:crinfty}.
\end{proof}

\begin{thm}\label{modelZ}
  For every $r\geq 0$ and $2\leq n\leq \infty$, 
  the category $\ncpx$ admits a right proper cofibrantly generated model structure, where: 
  \begin{enumerate}
    \item weak equivalences are $E_r$-quasi-isomorphisms,
    \item fibrations are morphisms of $n$-multicomplexes $f \colon A\to B$ 
      such that $f$ and  $ZW_r(f)$ are bidegree-wise surjective, and
    \item $I^n_r$ and $J^n_0\cup J^n_r$ are the sets of generating cofibrations and generating trivial cofibrations respectively.
  \end{enumerate}
\end{thm}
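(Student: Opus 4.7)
The plan is to apply Kan's recognition theorem to the category $\ncpx$ with generating cofibrations $I^n_r$, generating trivial cofibrations $J^n_0 \cup J^n_r$, and weak equivalences $\mathcal{W} = \mathcal{E}^n_r$. Conditions (1), (2) and (3) of Kan's theorem are precisely those noted to hold in the remark after its statement: $\mathcal{E}^n_r$ is closed under two-out-of-three and retracts, and the domains of both $I^n_r$ and $J^n_0 \cup J^n_r$ are small relative to the corresponding cell classes.

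For Kan's condition (4), the inclusion $\cl{J^n_0 \cup J^n_r}{cof} \subseteq \mathcal{E}^n_r$, I would adapt the argument of Proposition \ref{Jr-cof}: given $f \colon X \to Y$ in $\cl{J^n_0 \cup J^n_r}{cof}$, form the standard factorization square whose right-hand map is $\hmat{f}{\phi_r} \colon X \oplus (C_r \otimes Y) \to Y$. By Proposition \ref{coneetfib} both $\phi_r$ and $ZW_r(\phi_r)$ are surjective, so by Proposition \ref{r-Hovey-234} applied at levels $0$ and $r$ this map lies in $\cl{J^n_0}{inj} \cap \cl{J^n_r}{inj} = \cl{J^n_0 \cup J^n_r}{inj}$; hence a lift exists. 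Applying $E_{r+1}$ and using Proposition \ref{Crtrivial} that $E_{r+1}(C_r \otimes Y) = 0$ forces $E_{r+1}(f)$ to be an isomorphism. The case $n = \infty$ is handled identically using $C_r^\infty$ and Remark \ref{R:crinfty}.

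For Kan's condition (5), the equality $\cl{I^n_r}{inj} = \mathcal{E}^n_r \cap \cl{J^n_0 \cup J^n_r}{inj}$ is precisely Proposition \ref{trivfib_r} after rewriting $\cl{J^n_0 \cup J^n_r}{inj} = \cl{J^n_0}{inj} \cap \cl{J^n_r}{inj}$. Kan's theorem therefore produces the desired cofibrantly generated model structure on $\ncpx$. The fibrations are by construction $\cl{J^n_0 \cup J^n_r}{inj}$, which by Proposition \ref{r-Hovey-234} (applied at $r$ and at $0$, where $ZW_0$ is just the underlying bigraded-module functor) coincides with the class of morphisms $f$ such that both $f$ and $ZW_r(f)$ are bidegree-wise surjective.

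It remains to establish right properness. Given an $E_r$-equivalence $f \colon A \to B$ and a fibration $g \colon C \to B$ with pullback $f' \colon A \times_B C \to C$, the bidegree-wise surjectivity of $g$ implies that the projection $A \times_B C \to A$ is also bidegree-wise surjective with kernel isomorphic to $\ker(g)$, yielding a morphism of short exact sequences of $n$-multicomplexes whose kernel column is the identity. Comparing the associated long exact sequences of spectral sequences then gives, via a five-lemma argument on the $E_{r+1}$-page, that $E_{r+1}(f')$ is an isomorphism. The main obstacle I anticipate is justifying and applying the long exact sequence of spectral sequences for a short exact sequence of $n$-multicomplexes; this is classical for bicomplexes, and I would either adapt the corresponding argument from \cite{celw} or derive it directly from the witness-functor description of $Z_r$ and $B_r$ via the snake lemma.
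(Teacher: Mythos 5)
Your application of Kan's recognition theorem is correct and is exactly the route the paper takes: conditions (1)--(3) are covered by the remark following the theorem, condition (5) is Proposition~\ref{trivfib_r} together with the tautology $\cl{(J^n_0\cup J^n_r)}{inj}=\cl{J^n_0}{inj}\cap\cl{J^n_r}{inj}$, and condition (4) is the argument of Proposition~\ref{Jr-cof}, which (as you note) applies verbatim to $J^n_0\cup J^n_r$ because the comparison map $X\oplus(C_r\otimes Y)\to Y$ is injective against both $J^n_0$ and $J^n_r$ by Propositions~\ref{coneetfib} and~\ref{r-Hovey-234}. The identification of the fibrations as the maps $f$ with $f$ and $ZW_r(f)$ surjective is also right.

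The gap is in your treatment of right properness. The long exact sequence you want does not exist at the level you need it: for a bidegree-wise short exact sequence of $n$-multicomplexes one only gets a short exact sequence of $E_0$-pages (the associated graded), hence a long exact sequence of $E_1$-terms; for $r\geq 1$ the $E_{r+1}$-pages of an arbitrary short exact sequence do not fit into a long exact sequence, so the five-lemma step is unjustified. One could hope to salvage it using the extra hypothesis that $g$ is a fibration (so $ZW_r(g)$ is surjective, and the same holds for the pulled-back projection since $ZW_r$ is representable and so preserves pullbacks), but such a statement is a nontrivial result that is neither classical nor proved in this paper, and you flag it yourself as the missing ingredient. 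The intended argument is much simpler: in this model structure every object is fibrant, since for any $A$ the map $A\to 0$ trivially satisfies the surjectivity conditions, and a model category in which all objects are fibrant is right proper (see e.g.\ \cite[Corollary 13.1.3]{Hir}, which is the argument used in the proof of \cite[Theorem 3.14]{celw} that the paper cites). Replacing your last paragraph by this observation closes the gap; the rest of your proof stands.
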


\begin{proof} 
  The proof is standard (see, for example, the proof of~\cite[Theorem 3.14]{celw}) and uses Propositions~\ref{r-Hovey-234}, Proposition~\ref{Jr-cof} and Proposition~\ref{trivfib_r}.  
\end{proof}

As in the bicomplex case, in certain situations it may be easier to characterize fibrations if they are described in terms of surjectivity of $E_i$ instead of $ZW_r$.

\begin{defn}
  Let $(I^n_r)'$ and $(J^n_r)'$ be the sets of morphisms of $\ncpx$ given by 
  \[(I^n_r)':=\cup_{k=1}^{r-1} J^n_k\cup I^n_r\text{ and }
  (J^n_r)':=\cup_{k=0}^r J^n_k.\]
\end{defn}

The proof of the following result is analogous to that for bicomplexes~\cite[Theorem 4.39]{celw}.

\begin{thm}\label{modelE} 
  For every $r\geq 0$ and $2\leq n\leq\infty$, 
  the category $\ncpx$ admits a right proper cofibrantly generated model structure, denoted $(\ncpx)_r$, where: 
  \begin{enumerate}
    \item weak equivalences are $E_r$-quasi-isomorphisms,
    \item fibrations are morphisms of $n$-multicomplexes $f \colon A\to B$ 
      such that $E_i(f)$ is  bidegree-wise surjective for every $0\leq i\leq r$, and
    \item $(I^n_r)'$ and $(J^n_r)'$ are the sets of generating cofibrations and generating trivial cofibrations respectively. \qed
  \end{enumerate}
\end{thm}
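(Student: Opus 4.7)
The strategy is to apply Kan's theorem (the theorem attributed to D.~M.~Kan stated earlier) with the distinguished sets $(I^n_r)'$ and $(J^n_r)'$ and the class $\Ee^n_r$ of weak equivalences. Conditions (1)--(3) of that theorem are immediate: the two-out-of-three property and closure under retracts for $\Ee^n_r$ have already been observed, and the smallness of the domains of $(I^n_r)'$ and $(J^n_r)'$ is inherited from the smallness verification already used for Theorem~\ref{modelZ}, since the representing objects $\Zzw^n_s(p,q)$ and $\Bbw^n_s(p,q)$ are constructed by finitely many pushouts from free modules of finite bigraded rank. The real content is Kan's conditions (4) and (5), which amount to identifying $\cl{(J^n_r)'}{inj}$ with the class of morphisms in (ii) and $\cl{(I^n_r)'}{inj}$ with the intersection of $\Ee^n_r$ and that class.

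To identify the fibrations, I would write $(J^n_r)' = \bigcup_{k=0}^{r} J^n_k$, so that $f \in \cl{(J^n_r)'}{inj}$ iff $ZW_k(f)$ is bidegree-wise surjective for every $0 \le k \le r$ by Proposition~\ref{r-Hovey-234} (the case $k=0$ giving surjectivity of $f$). Then I would argue inductively on $k$, using Remark~\ref{mainobs}, that having $ZW_j(f)$ surjective for all $0\le j\le k$ is equivalent to having $E_j(f)$ surjective for all $0\le j\le k$; the base case $k=0$ is the tautology $E_0 = ZW_0 = \id$, and the inductive step is precisely the content of Remark~\ref{mainobs}. Taking $k=r$ yields the description of fibrations in~(ii). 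For condition~(5), decompose $(I^n_r)' = \bigcup_{k=1}^{r-1} J^n_k \cup I^n_r$ so that
\[
  \cl{(I^n_r)'}{inj} = \cl{I^n_r}{inj} \cap \bigcap_{k=1}^{r-1} \cl{J^n_k}{inj};
\]
Proposition~\ref{trivfib_r} rewrites the first factor as $\Ee^n_r \cap \cl{J^n_0}{inj} \cap \cl{J^n_r}{inj}$, and combining gives $\Ee^n_r \cap \bigcap_{k=0}^{r} \cl{J^n_k}{inj} = \Ee^n_r \cap \cl{(J^n_r)'}{inj}$, as required.

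For condition~(4), $\cl{(J^n_r)'}{cof} \subseteq \Ee^n_r$, I would argue that any relative $(J^n_r)'$-cell complex is a transfinite composition of pushouts of maps in some $J^n_k$ with $0\le k\le r$. Each such pushout lies in $\cl{J^n_k}{cof} \subseteq \Ee^n_r$ by Proposition~\ref{Jr-cof}, and $\Ee^n_r$ is closed under transfinite composition (since $E_r$ of a multicomplex is a subquotient of a bigraded object and these operations commute with filtered colimits of $\kk$-modules) and under retracts. Finally, for right properness: given an $E_r$-quasi-isomorphism $f \colon A \to B$ and a fibration $g \colon C \to B$ in this structure, the $E_k$-surjectivity of $g$ for $0\le k\le r$ produces short exact sequences on each page of the spectral sequence that allow a standard comparison argument to conclude that $f$ pulled back along $g$ remains an $E_r$-quasi-isomorphism. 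The only step requiring genuine care is the inductive passage from $ZW_k$-surjectivity to $E_k$-surjectivity via Remark~\ref{mainobs}; everything else is a bookkeeping rearrangement of the results already proved for Theorem~\ref{modelZ}.
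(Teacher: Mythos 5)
Your proposal is essentially the paper's argument: the paper proves Theorem~\ref{modelE} by declaring it analogous to the bicomplex case, and the analogous proof is exactly the verification of Kan's conditions that you carry out, with the same ingredients (Proposition~\ref{r-Hovey-234} plus Remark~\ref{mainobs} for the fibrations, Proposition~\ref{trivfib_r} for condition~(5), Proposition~\ref{Jr-cof} for condition~(4)). Your identification of $\cl{(J^n_r)'}{inj}$ and $\cl{(I^n_r)'}{inj}$ is correct. For condition~(4) you take a slightly different route: rather than transfinite induction, one can run the retract argument of Proposition~\ref{Jr-cof} directly on an arbitrary $f\in\cl{(J^n_r)'}{cof}$, since Propositions~\ref{coneetfib} and~\ref{r-Hovey-234} show that $X\oplus(C_r\otimes Y)\to Y$ is $(J^n_r)'$-injective (it is $J^n_k$-injective for every $0\leq k\leq r$ simultaneously), so the lift exists and $E_{r+1}(f)$ is an isomorphism without any cell-complex decomposition. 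Your route also works, but it needs the extra observation that $E_{r+1}$ commutes with filtered colimits; this is best justified via the witness description $E_{r+1}\cong ZW_{r+1}/w_{r+1}(BW_{r+1})$, since $ZW$ and $BW$ are finite limits computed bidegreewise and filtered colimits in $\kk\Mod$ are exact (the description via $Z_{r+1}/B_{r+1}$, with its existential quantifiers, is less immediately colimit-friendly than your phrase ``subquotient of a bigraded object'' suggests).

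The one genuine weak point is right properness. As written, the claim that the $E_i$-surjectivity of the fibration ``produces short exact sequences on each page'' admitting a ``standard comparison argument'' is not a proof: the spectral sequence of a pullback is not the pullback of spectral sequences, and short exactness does not propagate page by page. Fortunately nothing elaborate is needed: in this model structure every object is fibrant, since $E_i(A)\to E_i(0)=0$ is surjective for all $i$, and a model category in which all objects are fibrant is right proper (pullbacks of weak equivalences along fibrations between fibrant objects are weak equivalences; see~\cite[Corollary 13.1.3]{Hir}). Replacing your sketch by this observation completes the argument.
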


\begin{defn}
We refer to the model structure $(\ncpx)_r$ of Theorem~\ref{modelE} as the \emph{$r$-model structure}. The terms \emph{$r$-fibrant}, \emph{$r$-cofibrant} and \emph{$r$-trivial} all refer to the corresponding notions in this model structure.
\end{defn}

\begin{rem} 

  Note that the generating (trivial) cofibrations of the  model structure of Theorem~\ref{modelZ} form a subclass of the 
generating (trivial) cofibrations of the model structure of Theorem~\ref{modelE}.
Moreover, these two model structures have the same weak equivalences.
  Thus, for each $n$ with $2\leq n\leq\infty$ and each $r\geq 0$, the identity functors
  give a Quillen equivalence between  $(\ncpx)_r$ and  $\ncpx$ with the model structure of Theorem~\ref{modelZ}.
\end{rem}

\section{Relationships between model category structures}
\label{S:relationships}

In order to compare our model structures on $n$-multicomplexes as $n$ varies, in this section we reinterpret
$n$-multicomplexes as modules over a graded associative algebra in the category of vertical bicomplexes.

\subsection{Monoids in vertical bicomplexes}

Recall from Section~\ref{S:preliminaries} that the category $\cpx{1}=\vbic$ has as objects vertical bicomplexes, and that it is a symmetric monoidal category. A monoid $(M,\delta_0)$ in this category is a vertical bicomplex endowed with a unital and associative multiplication $M\otimes M\rightarrow M$ compatible with the differential $\delta_0$. In other words, it is a unital bigraded (associative, not necessarily commutative) $\kk$-algebra, endowed with a derivation of algebras $\delta_0$ of bidegree 
(0,1) such that $\delta_0^2=0$. For simplicity, we call such an object a {\sl dg algebra}. This is only a slight abuse of terminology -- this differs from the usual notion just by having an extra grading.

Consider $\kk\langle d_1,d_2,\ldots,d_i,\ldots\rangle$, the free bigraded associative $\kk$-algebra generated by the bigraded set $ \{d_i,i\geq 1\}$, with $d_i$ of bidegree $(-i,1-i)$.

For $k\geq 1$, we consider the following element of $\kk\langle d_1,d_2,\ldots,d_i,\ldots\rangle$:
\[S_k=\sum_{\substack{i+j=k\\   i,j\geq 1}}(-1)^{i+1}d_id_j,\]
in bidegree $(-k, 2-k)$, 
with the convention that $S_1=0$.

Since  $\kk\langle d_1,d_2,\ldots,d_i,\ldots\rangle$ is a free associative algebra, a derivation $\delta_0$ on  $\kk\langle d_1,d_2,\ldots,d_i,\ldots\rangle$ is determined by its values on the generators $d_i$.
Set, for $i\geq 1$,
\[\delta_0(d_i)=S_i.\]
Proving that $\delta_0^2=0$  amounts to proving that $\delta_0(S_i)=0$, which is standard.

For $n\geq 1$, let  $I_n$ be the two sided ideal of  $\kk\langle d_1,d_2,\ldots,d_i,\ldots\rangle$ generated by the elements $S_k$ and $d_k$ for $k\geq n$. The definition of $\delta_0$ shows that this ideal is compatible with the differential.
By convention $I_\infty=\{0\}$.

\begin{defn}
  Let $\mathcal C_\infty$ be the dg algebra $\kk\langle d_1,d_2,\ldots,d_i,\ldots\rangle$ endowed with differential $\delta_0$ as above. And for $n\geq 1$, let $\mathcal C_n$ be the dg algebra
  \[\mathcal C_n=(\mathcal C_\infty/I_n,\delta_0).\]
\end{defn}

For $1\leq n\leq l \leq\infty$, we have $I_l\subset I_n$ and thus a surjective morphism of dg algebras  
\[\Phi_{l,n}\colon \mathcal{C}_{l} \to \mathcal{C}_n.\]

\begin{prop}\label{P:quasiiso-monoid} If $2\leq n\leq l \leq\infty$, then $\Phi_{l,n}\colon \mathcal{C}_{l} \to \mathcal{C}_n$ is a quasi-isomorphism of vertical bicomplexes.
\end{prop}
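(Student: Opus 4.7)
The plan is to reduce via the two-out-of-three property, and then prove the reduced claim via a horizontal-degree decomposition combined with a simplicial identification. Since $\Phi_{l,n}\circ\Phi_{\infty,l}=\Phi_{\infty,n}$ and $\Phi_{\infty,\infty}=\mathrm{id}$, it suffices to prove that $\Phi_{\infty,n}\colon \mathcal{C}_\infty\to\mathcal{C}_n$ is a quasi-isomorphism for each $n\geq 2$. Moreover $\delta_0$ has bidegree $(0,1)$ and so preserves horizontal degree, so $\mathcal{C}_\infty$ and $\mathcal{C}_n$ decompose as direct sums of vertical complexes $(\mathcal{C}_\infty)^{-p,\bullet}$ and $(\mathcal{C}_n)^{-p,\bullet}$ for $p\geq 0$, and $\Phi_{\infty,n}$ respects this decomposition. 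The problem thus reduces to checking the quasi-iso condition one horizontal slice at a time.

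For $p\in\{0,1\}$ both slices are $\kk$ in vertical degree $0$, generated by $1$ and $d_1$ respectively, and $\Phi_{\infty,n}$ is the identity on them. For $2\leq p\leq n-1$, every generator $d_k$ or $S_k$ of $I_n$ has horizontal degree $-k\leq -n<-p$, so $I_n$ contributes nothing to these slices and $\Phi_{\infty,n}$ is an isomorphism. I would next show that $(\mathcal{C}_\infty)^{-p,\bullet}$ is acyclic for every $p\geq 2$ by a combinatorial identification: an $\kk$-basis is indexed by the compositions of $p$ into positive parts, and the bijection sending $(i_1,\ldots,i_m)$ to the subset $\{i_1,\,i_1+i_2,\ldots,i_1+\cdots+i_{m-1}\}\subseteq\{1,\ldots,p-1\}$ identifies the vertical complex, after tracking the signs coming from the Leibniz rule and from the definition of $S_k$, with a signed version of the augmented simplicial cochain complex of the contractible $(p-2)$-simplex.

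What remains, and is the main obstacle, is showing that $(\mathcal{C}_n)^{-p,\bullet}$ is acyclic for $p\geq n$, equivalently that the kernel complex $I_n^{-p,\bullet}$ of $\Phi_{\infty,n}$ is acyclic. The difficulty is that $I_n$ is the dg ideal generated by $\{d_k:k\geq n\}$ with an intricate $\kk$-module structure, since $\delta_0(d_k)=S_k$ creates new elements built from low-index generators; correspondingly $\mathcal{C}_n$ inherits nontrivial relations $\overline{S_k}=0$ for $n\leq k\leq 2n-2$ in high horizontal degrees. I would attempt to construct an explicit contracting homotopy on $I_n$ modeled on the standard cone contraction of the simplex (fix a distinguished vertex $v_0\in\{1,\ldots,p-1\}$ and use the insertion/deletion of $v_0$), and verify by induction that it respects all the ideal relations; alternatively, one could filter $\mathcal{C}_n$ by composition length and study the resulting spectral sequence, whose $E^1$ page should be an acyclic Koszul-type complex. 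The bulk of the technical work in either approach is careful sign bookkeeping and verification of compatibility with the quotient relations.
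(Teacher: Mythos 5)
Your overall route is essentially the one the paper takes: it too reduces via two-out-of-three to the maps $\Phi_{\infty,n}$, and its key tool is exactly the homotopy you propose, namely the cone contraction at the distinguished vertex $1$, written explicitly as $h(d_{i_1}\cdots d_{i_k})=d_{i_2+1}\cdots d_{i_k}$ if $k>1$ and $i_1=1$, and $h=0$ otherwise. This $h$ satisfies $\delta_0h+h\delta_0=\mathrm{id}$ outside the span of $1$ and $d_1$, which yields your acyclicity of $(\mathcal{C}_\infty)^{-p,\bullet}$ for $p\geq 2$; your simplex identification is a correct way to see this, although you also defer its sign bookkeeping, which amounts to the same computation.

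The genuine gap is the step you yourself flag as the main obstacle: the acyclicity of $(\mathcal{C}_n)^{-p,\bullet}$ for $p\geq n$, equivalently of the kernel $I_n$, for finite $n\geq 3$ (for $n=2$ your scheme does close, since $(\mathcal{C}_2)^{-p,\bullet}=0$ for $p\geq 2$, and the paper instead compares homologies directly there). You only say you would \emph{attempt} to build a contracting homotopy on $I_n$ compatible with the ideal relations, or to run a length filtration whose $E^1$ page \emph{should} be acyclic; neither is carried out, and this is precisely where the content of the proof lies. In the paper the resolution is that the very same $h$ restricts to $I_n$: one proves $h(I_n)\subseteq I_n$ by a case analysis on elements $abc$ with $b=d_k$ or $b=S_k$ for $k\geq n$ and $a\in\{1,\,d_1,\,\text{other}\}$, using identities such as $h(S_kc)=d_kc$. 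This stability is not automatic, because $h$ is only $\kk$-linear (neither a derivation nor a module map) and, for instance, $h(d_1S_kc)$ produces a new term involving $S_{k+1}c$, which lies in $I_n$ only because $k+1\geq n$. Granting $\delta_0h+h\delta_0=\mathrm{id}$ in horizontal degrees $\leq-2$ and $h(I_n)\subseteq I_n$, the ideal $I_n$ is contractible and the long exact sequence gives the proposition; but as written, your proposal identifies the right homotopy without verifying either the homotopy identity or its compatibility with $I_n$, so the statement is not yet proved for finite $n\geq 3$.
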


\begin{proof} For $2\leq m\leq n\leq l\leq\infty$ we have $\Phi_{l,m}=\Phi_{n,m}\circ \Phi_{l,n}$. 
  By the two-out-of-three property of quasi-isomorphisms, it is enough to prove that the maps $\Phi_{\infty,n} \colon \mathcal C_\infty\rightarrow \mathcal C_n$ are quasi-isomorphisms for all $2\leq n< \infty$. 

  For $n=2$, we have $\mathcal C_2=\kk\langle d_1\rangle/(d_1^2)$ with $\delta_0(d_1)=0$. 
  Hence it is enough to prove that the induced map on the homology with respect to $\delta_0$,
  $H^{*,*}(\Phi_{\infty,2}) \colon H^{*,*}(\mathcal C_\infty)\rightarrow H^{*,*}(\mathcal C_2)$ is an isomorphism. 
  In order to do so we build a homotopy $h \colon \mathcal C_\infty\rightarrow \mathcal C_\infty$. 
  Any element in $\mathcal C_\infty$ is a linear combination with coefficients in $\kk$ of words of the form $d_{i_1}\ldots d_{i_k}$ with $i_j\geq 1$. 
  The empty word corresponds to $1_\kk$ and we define $h(1_\kk)=0$.
  Let $h$ be the $\kk$-linear map determined by
  \[
    h(d_{i_1}\ldots d_{i_k})=\begin{cases} 0, & \text{ if } k=1 \text { or } i_1>1,\\
    d_{i_2+1}\ldots d_{i_k}, & \text{ if } k>1 \text { and } i_1=1.\end{cases}
  \]
  Note that for any word $w$ we have $h(S_iw)=d_iw$ for $i\geq 2$.
  Let us compute:
  \begin{align*}
    &(\delta_0h+h\delta_0)(1_\kk)=0, \quad (\delta_0h+h\delta_0)(d_1)=0, \\
    &(\delta_0h+h\delta_0)(d_i)=h(S_i)=d_i, \qquad\text{for $i\geq 2$.}
  \end{align*}
  For $k\geq 2$,
  \begin{align*}
    (\delta_0h&+h\delta_0)(d_1d_{i_2}\ldots d_{i_k})\\
    &=
    \delta_0(d_{i_2+1}d_{i_3}\ldots d_{i_k})+\sum_{j=2}^{k}(-1)^{i_2+1+\ldots+i_{j-1}+1} h(d_1d_{i_2}\ldots \delta_0(d_{i_j})\ldots d_{i_k})\\
    &=
    \sum_{j=3}^{k}(-1)^{i_2+2+\ldots+i_{j-1}+1} d_{i_2+1}\ldots S_{i_j}\ldots d_{i_k}+\sum_{j=3}^{k}(-1)^{i_2+1+\ldots+i_{j-1}+1} d_{i_2+1}\ldots S_{i_j}\ldots d_{i_k}\\
    &\qquad +S_{i_2+1}d_{i_3}\ldots d_{i_k}+\sum_{\substack{u+v=i_2\\ u,v\geq 1}}(-1)^{u+1}d_{u+1}d_vd_{i_3}\ldots d_{i_k}\\
    &=d_1d_{i_2}d_{i_3}\ldots d_{i_k},
  \end{align*}
  and for $i_1>1$ 
  \[
    (\delta_0h+h\delta_0)(d_{i_1}d_{i_2}\ldots d_{i_k})=
  h(S_{i_1}d_{i_2}\ldots d_{i_k})=d_{i_1}d_{i_2}\ldots d_{i_k}.\]
  So $H^{p,q}( \mathcal C_\infty)=0$ for every $(p,q)\not\in\{(0,0),(-1,0)\}$.
  In addition
  \[(\mathcal C_\infty)^{0,0}=\kk,\ \mathcal C_{\infty}^{-1,0}=\kk d_1,\ \mathcal C_\infty^{0,-1}= \mathcal C_\infty^{-1,-1}=0,\] 
  hence $\Phi_{\infty,2}$ is a quasi-isomorphism.

  Let us prove that for any $n\geq 3$ we have $h(I_n)\subseteq I_n.$
  Any element in $I_n$ is a sum of elements of the form $abc$, with $a,c\in\mathcal C_\infty$ and $b=S_k$ or $b=d_k$ for some $k\geq n$.
  If $a\not=1_\kk$ or $a\not=d_1$ then $h(abc)=h(a)bc\in I_n$. \\
  Assume $a=1_\kk$ and let $k\geq n\geq 3$.
  \begin{itemize}
    \item If $b=d_k$  then $h(d_kc)=0.$
    \item If $b=S_k$  then $h(S_kc)=d_kc\in I_n$.
  \end{itemize}
  Assume $a=d_1$.
  \begin{itemize}
    \item If $b=d_k$  then $h(d_1d_kc)=d_{k+1}c \in I_n,$ since $k+1\geq n+1\geq n$.
    \item If $b=S_k$  then 
      \[
        h(d_1S_kc)=\sum_{\substack{u+v=k\\ u,v\geq 1}}(-1)^{u+1} h(d_1d_ud_vc)=\sum_{\substack{u+v=k\\ u,v\geq 1}}(-1)^{u+1} d_{u+1}d_vc
      =-S_{k+1}c+d_1d_{k+1}c\in I_n.\]
  \end{itemize}
  The quotient map $\Phi_{\infty,n} \colon \mathcal C_\infty\rightarrow \mathcal C_n$ has kernel $I_n$ and $h \colon I_n\rightarrow I_n$ is a homotopy from the identity of $I_n$ to $0$. Hence $I_n$ is contractible and the morphism is a quasi-isomorphism.
\end{proof}

\begin{rem} The morphism $\Phi_{\infty,2} \colon \mathcal C_\infty\rightarrow \mathcal C_2$ corresponds to the Koszul resolution of the operad of dual numbers $\mathcal C_2$ (see for example~\cite[10.3.16]{lv}), hence it is a quasi-isomorphism.
  The proof given here via the homotopy $h$ is not a consequence of this result, however, and this method has been chosen because it allows us to treat the case of general $n$.
\end{rem}

\subsection{Quillen equivalences}\label{S:quillenequivalence}

\begin{prop}\label{P:Cn-modules}  For $1\leq n\leq\infty$, the category of\/ $\mathcal C_n$-modules in vertical bicomplexes is isomorphic to the category of $n$-multicomplexes.
\end{prop}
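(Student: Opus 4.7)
The plan is to exhibit mutually inverse functors between $\mathcal C_n$-modules in $\vbic$ and $\ncpx$, each acting as the identity on underlying bigraded $\kk$-modules. Given a $\mathcal C_n$-module $(M,d_0,\mu)$ with action $\mu\colon\mathcal C_n\otimes M\to M$, I would use the fact that $\mathcal C_n$ is generated as a bigraded associative $\kk$-algebra by the symbols $d_i$, $1\leq i<n$, to define operators $\tilde d_i\colon M\to M$ of bidegree $(-i,1-i)$ by $\tilde d_i(m):=\mu(d_i\otimes m)$, and set $\tilde d_0:=d_0$, $\tilde d_i:=0$ for $i\geq n$. Conversely, given an $n$-multicomplex $(A,\{d_i\})$, the universal property of the free bigraded associative algebra yields a map $\mathcal C_\infty\otimes A\to A$ sending a word $d_{i_1}\cdots d_{i_k}\otimes a$ to the composite $d_{i_1}\cdots d_{i_k}(a)$; I then need to check that this descends to an action of $\mathcal C_n$ which is a morphism of vertical bicomplexes.

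The key verification in both directions is the translation between the defining relations of $\mathcal C_n$, together with Leibniz compatibility with $\delta_0$, and the multicomplex relations $\sum_{i+j=l}(-1)^i d_id_j=0$. For $1\leq l<n$, the condition that $\mu$ be a morphism of vertical bicomplexes, read using the Koszul sign rule in the symmetric monoidal structure on $\vbic$ (with $d_l$ of vertical degree $1-l$), gives
\[d_0\circ\tilde d_l+(-1)^l\tilde d_l\circ d_0\;=\;\mu(\delta_0(d_l)\otimes-)\;=\;\mu(S_l\otimes-)\;=\sum_{\substack{u+v=l\\ u,v\geq 1}}(-1)^{u+1}\tilde d_u\tilde d_v,\]
which is precisely the multicomplex relation at level $l$ after rearrangement. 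For $l\geq n$, both $S_l$ and $d_l$ lie in $I_n$ so the identity above degenerates on the module side; on the $n$-multicomplex side the relation $\sum_{i+j=l}(-1)^id_id_j=0$ reduces, using $d_l=0$, to $\sum_{1\leq u,v,\, u+v=l}(-1)^u d_u d_v=0$, which is exactly $S_l\cdot a=0$. Thus the forward construction produces an $n$-multicomplex, and the reverse construction descends to an action of the quotient $\mathcal C_n=\mathcal C_\infty/I_n$ which is automatically compatible with $d_0$.

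The morphism correspondence is immediate: a bidegree-preserving map of underlying bigraded $\kk$-modules is $\mathcal C_n$-linear if and only if it commutes with the action of each generator $d_i$, which is precisely the condition that it commute with $\tilde d_i$ for every $i\geq 0$, i.e.\ be a strict morphism of $n$-multicomplexes. The two assignments are visibly inverse to one another on objects, so together they yield an isomorphism of categories. The main bookkeeping obstacle is ensuring that the Koszul signs arising from the tensor differential on $\mathcal C_n\otimes M$ match the signs $(-1)^i$ in the multicomplex relations; once that sign calculation is pinned down, everything else is a purely formal consequence of the presentation of $\mathcal C_n$ by generators and relations.
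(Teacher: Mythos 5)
Your proposal is correct and follows essentially the same route as the paper: the action of the generators $d_i$ gives the structure maps, the compatibility of the action with $\delta_0$ translates (with the correct Koszul sign $(-1)^{1-l}$) into the multicomplex relations, and $\mathcal C_n$-linearity of a bidegree $(0,0)$ map is exactly strictness. The only cosmetic difference is that the paper first treats $n=\infty$ and then observes that a $\mathcal C_n$-module is a $\mathcal C_\infty$-module with $d_i^M=0$ for $i\geq n$ (the $S_k$-relations for $k\geq n$ then holding automatically), whereas you verify the descent along $\mathcal C_\infty\to\mathcal C_n$ directly.
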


\begin{proof}
  In the category of vertical bicomplexes a (left) $\mathcal C_\infty$-module is a bigraded $\kk$-module $M$ endowed with a differential $d_0^M$ of bidegree $(0,1)$ 
  together with an action $\lambda \colon \mathcal C_\infty\otimes M\rightarrow M$ compatible with the differentials $\delta_0$ and $d_0^M$. 
  Since $\mathcal C_\infty$ is free as a bigraded $\kk$-algebra the action is determined by its values on $d_i, i\geq 1$. We denote by 
  $d_i^M \colon M\rightarrow M$ the map that associates $\lambda(d_i\otimes m)$ to $m$. The compatibility with the differentials gives that
  \[d_0^Md_n^M=\sum_{i+j=n,i,j\geq 1}(-1)^{i+1}d_i^Md_j ^M+(-1)^{1-n}d_n^Md_0^M,\]
  that is, $M$ is a multicomplex. 
  In addition morphisms of  $\mathcal C_\infty$-modules are morphisms of multicomplexes. 
  This completes the proof for $n=\infty$.
  A (left) $\mathcal C_n$-module is a (left) $\mathcal C_\infty$-module $M$ such that $d_i^M=0$ for all $i\geq n$, hence an $n$-multicomplex.
\end{proof}

As a corollary, the dg algebra morphisms $\Phi_{l,n}\colon \mathcal{C}_{l} \to \mathcal{C}_n$, for $1\leq n\leq l\leq \infty$ 
induce pairs of adjoint functors 
\begin{center}
  \begin{tikzcd}
    \cpx{n}= \mathcal{C}_{n}\Mod \arrow[r, "i_{l,n}"', shift right] & \mathcal{C}_{l}\Mod=\cpx{l} \arrow[l, "p_{l,n}"',shift right] 
  \end{tikzcd}
\end{center}
where the right adjoint $i_{l,n}$ is the restriction of scalars functor and the left adjoint $p_{l,n} (M) = \mathcal C_n \otimes_{\mathcal C_{l}} M$ is the extension of scalars functor. Note that if $M$ is an $n$-multicomplex, then $i_{l,n}(M)$ is the $l$-multicomplex $M$ with $d_n=\ldots=d_{l-1}=0$.

Recall that we write $(\cpx{n})_r$ for the category of $n$-multicomplexes with the $r$-model structure of Theorem~\ref{modelE}.

\begin{thm}\label{Qequiv}  For\/ $2\leq n\leq l\leq \infty$ and $r\geq 0$ the adjunction
  \[
    \begin{tikzcd}
      (\cpx{n})_r \arrow[r, "i_{l,n}"', shift right] &(\cpx{l})_r \arrow[l, "p_{l,n}"',shift right] 
    \end{tikzcd}
  \]
  is a Quillen equivalence.
\end{thm}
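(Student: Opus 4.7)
The plan is to first establish that $(p_{l,n}, i_{l,n})$ is a Quillen adjunction, then upgrade it to a Quillen equivalence by a cell-by-cell verification of the unit. The key observation is that $i_{l,n}$ is essentially a forgetful functor: on underlying bigraded $\kk$-modules it is the identity, merely recording that $d_j = 0$ for $n \le j \le l-1$, which already holds in $\cpx{n}$. Because the formulas for $ZW_s^{p,q}$ and $E_s^{p,q}$ involve only finitely many differentials, one obtains natural isomorphisms $ZW_s^{p,q}(A) \cong ZW_s^{p,q}(i_{l,n}A)$ and $E_s(A) \cong E_s(i_{l,n}A)$ for every $A \in \cpx{n}$ and every $s \ge 0$. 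It follows that $i_{l,n}$ both preserves and reflects fibrations and $E_r$-quasi-isomorphisms (as characterised in \cref{modelE}), so $(p_{l,n}, i_{l,n})$ is a Quillen adjunction and, moreover, $i_{l,n}$ preserves and reflects weak equivalences between arbitrary pairs of objects.

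Because $i_{l,n}$ preserves weak equivalences, the derived unit agrees up to weak equivalence with the ordinary unit, and because $i_{l,n}$ reflects weak equivalences between all pairs of objects (not just fibrant ones), the standard Quillen equivalence criterion reduces to showing that for every $r$-cofibrant $X \in \cpx{l}$ the unit $\eta_X \colon X \to i_{l,n}p_{l,n}(X)$ is an $E_r$-quasi-isomorphism. As preparation I would verify that $p_{l,n}$ carries the cells of $(\cpx{l})_r$ to the corresponding cells of $(\cpx{n})_r$: a direct check yields $p_{l,n}(\mathbb{D}^l(p,q)) \cong \mathbb{D}^n(p,q)$, and since $p_{l,n}$ is cocontinuous, the iterated pushout construction of \cref{D:curlyZW} gives $p_{l,n}(\Zzw^l_s(p,q)) \cong \Zzw^n_s(p,q)$ and $p_{l,n}(\Bbw^l_s(p,q)) \cong \Bbw^n_s(p,q)$ for all $s \ge 0$, so $p_{l,n}$ identifies the generating sets $(I^l_r)'$, $(J^l_r)'$ of $(\cpx{l})_r$ with those of $(\cpx{n})_r$.

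The proof that $\eta_X$ is an $E_r$-quasi-iso for every $r$-cofibrant $X$ then proceeds by transfinite induction on the cell structure, together with closure under retracts. The base case $X = \mathbb{D}^l(p,q)$ is immediate: both $\mathbb{D}^l(p,q)$ and $\mathbb{D}^n(p,q)$ have vanishing $d_0$-homology (as noted just after the definition of $\mathbb{D}^n(p,q)$), so $E_1 = 0$ on both sides and $\eta_{\mathbb{D}^l(p,q)}$ is trivially an $E_r$-quasi-iso for any $r \ge 0$. For the inductive pushout and transfinite-composition steps, I would exploit that $i_{l,n}$ preserves all colimits (being restriction of scalars, computed at the underlying vertical-bicomplex level) and $p_{l,n}$ is cocontinuous (being a left adjoint), so $i_{l,n}p_{l,n}$ commutes with pushouts and filtered colimits; the unit $\eta$ is thus a natural transformation of colimit-preserving functors. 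The inductive step is then handled by transferring the explicit $\delta_0$-contracting homotopy for $\ker \Phi_{l,n}$ constructed in the proof of \cref{P:quasiiso-monoid} to an $r$-homotopy on the level of modules, showing that the cofibre of a cell attachment is $E_{r+1}$-acyclic in both $\cpx{l}$ and $\cpx{n}$ simultaneously.

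The main obstacle will be making the inductive pushout step rigorous. Since \cref{modelE} guarantees only right properness, the standard left-proper gluing lemma for weak equivalences is not available. The verification therefore requires either a direct five-lemma argument on the $E_{r+1}$ page of the spectral sequences of the pushout square, or an explicit construction (via the contracting homotopy of \cref{P:quasiiso-monoid}) of an $r$-homotopy inverse to $\eta_X$ at each stage of the cell construction. In both approaches the essential input is the quasi-isomorphism $\Phi_{l,n}$, which is precisely why the hypothesis $n \ge 2$ is needed.
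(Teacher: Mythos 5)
Your opening moves match the paper's: the adjunction is Quillen because $i_{l,n}$ does not change the underlying bigraded module, so it preserves (indeed reflects) the surjectivity conditions and the $E_r$-quasi-isomorphisms of \cref{modelE}; all objects are fibrant, so everything reduces to showing the unit is an $E_r$-quasi-isomorphism on $r$-cofibrant objects, and your identification $p_{l,n}(\Zzw^l_s(p,q))\cong\Zzw^n_s(p,q)$, $p_{l,n}(\Bbw^l_s(p,q))\cong\Bbw^n_s(p,q)$ via cocontinuity and \cref{D:curlyZW} is correct. But from that point on the argument has a genuine gap: the inductive pushout/transfinite step is exactly the content of the theorem, and neither of the strategies you sketch for it is established or obviously workable. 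The functor $E_{r+1}$ does not turn a cell attachment into a long exact sequence, so there is no off-the-shelf five-lemma on the $E_{r+1}$ page of a pushout square; moreover the cofibre of a generating cofibration $\iota_{r+1}$ is $\Zzw^l_{r+1}(p+r,q+r-1)$ by \cref{lempush}, whose $E_{r+1}$-page is $\kk\oplus\kk\neq 0$, so the claimed ``$E_{r+1}$-acyclicity of the cofibre of a cell attachment'' fails for precisely the cells that build $r$-cofibrant objects. Similarly, ``transferring the $\delta_0$-contracting homotopy of $\ker\Phi_{l,n}$ to the level of modules'' is not a routine step: for a general $X$ one must compare $X$ with $\mathcal C_n\otimes_{\mathcal C_l}X$, and controlling this derived tensor product is exactly the flatness/cofibrancy issue that a change-of-rings theorem exists to handle. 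Since you yourself flag this step as ``the main obstacle'' and leave it open, the proposal does not yet prove the statement.

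The paper avoids all of this with two observations you did not make. First, since ${\it Cof}_r^l\subseteq{\it Cof}_0^l$ and $\Ee_0^l\subseteq\Ee_r^l$, every $r$-cofibrant object is $0$-cofibrant and every $E_0$-quasi-isomorphism is an $E_r$-quasi-isomorphism, so it suffices to prove the unit is an $E_0$-quasi-isomorphism on $0$-cofibrant objects; the case of general $r$ follows formally. Second, the $0$-model structure on $\cpx{n}=\mathcal C_n\Mod$ is the one transferred from the projective model structure on $\vbic$ along the free--forgetful adjunction, and then the standard result \cite[Proposition 11.2.10]{Fresse09} states that restriction and extension of scalars along a quasi-isomorphism of dg algebras is a Quillen equivalence; applying it to $\Phi_{l,n}$ (a quasi-isomorphism by \cref{P:quasiiso-monoid}, which is where $n\geq 2$ enters) gives the $r=0$ case with no cellular induction at all. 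If you wish to salvage your approach, the first thing to do is make this reduction to $r=0$; even then, your cell-by-cell argument would amount to reproving the cited change-of-rings theorem by hand.
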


\begin{proof}
  It is a Quillen adjunction from Theorem~\ref{modelE}, for the right adjoint preserves fibrations and trivial fibrations.
  Note that the right adjoint reflects weak equivalences and that all objects are fibrant. 
  Hence to establish a Quillen equivalence it is enough to prove that for any $r$-cofibrant object $M$ in $\cpx{l}$, the
  unit of the adjunction $M \to i_{l,n}p_{l,n} M$ is an $E_r$-quasi-isomorphism (see~\cite[Corollary 1.3.16]{Hovey}).

  Recall that  any $r$-cofibrant object is $0$-cofibrant.  
  Thus, if the unit of the adjunction is an $E_0$-quasi-isomorphism for any $0$-cofibrant object, then it is an $E_r$-quasi-isomorphism for any $r$-cofibrant object, and it is enough to treat the case $r=0$.

  Let us prove that the adjunction is a Quillen equivalence for $r=0$.  
  The model category structure $(\cpx{n})_0$ corresponds to the transferred model category structure along the adjunction
  \[
    \begin{tikzcd}
      \cpx{n}= \mathcal{C}_{n}\Mod \arrow[r, "U_n"', shift right] & \vbic \arrow[l, "\mathcal C_n\otimes-"',shift right],
    \end{tikzcd}
  \]
  where the right adjoint $U_n$ is the forgetful functor and the model category structure on $\vbic$ coincides with the projective model structure on $\Z$-graded cochain complexes, that is, weak equivalences are quasi-isomorphisms with respect to the bidegree $(0,1)$ differential  $d_0$, fibrations are bidegreewise surjective morphisms. 
  A standard result (see~\cite[Proposition 11.2.10]{Fresse09}) states that a morphism of dg algebras $\alpha \colon R \to S$ induces a Quillen adjunction between the categories of $R$-modules and $S$-modules (with the transferred model structure from $\vbic$ as seen above) through the restriction and extension of scalars functors, and this is a Quillen equivalence if (and only if) $\alpha$ is a quasi-isomorphism.
  Hence, Proposition \ref{P:quasiiso-monoid} implies that the Quillen adjunction
  \[
    \begin{tikzcd}
      (\cpx{n})_0 \arrow[r, "i_{l,n}"', shift right] &(\cpx{l})_0 \arrow[l, "p_{l,n}"',shift right] 
    \end{tikzcd}
  \]
  is a Quillen equivalence, that is, $M\rightarrow  i_{l,n}p_{l,n} M$  is an $E_0$-quasi-isomorphism for every $0$-cofibrant object $M$ in $\cpx{l}$.
\end{proof}

\begin{rem} In the previous proof the model category structure considered in $\vbic$ is precisely $(\cpx{1})_0$. The adjunction
  \[
    \begin{tikzcd}
      (\cpx{1})_0 \arrow[r, "i_{2,1}"', shift right] &(\cpx{2})_0 \arrow[l, "p_{2,1}"',shift right] 
    \end{tikzcd}
  \]
  is a Quillen adjunction, however it is not a Quillen equivalence. Indeed, $\Zzw^2_{1}(0,0)$ is $0$-cofibrant in $\cpx{2}$ and the unit of the adjunction for this object is the projection onto the $(0,0)$-coordinate
  \[\Zzw^2_{1}(0,0) \to \kk^{0,0}\]
  which is not an $E_0$-quasi-isomorphism.
\end{rem}

\section{Model structures on bounded multicomplexes}
\label{S:bounded}

In this section, we will apply the transfer theorem to give model structures on certain categories of bounded $n$-multicomplexes. 
We obtain such transferred model structures on $(-\N,\Z)$-graded $n$-multicomplexes for 
all $r\geq 0$ and on $(\Z,\N)$-graded multicomplexes for $r=0$.
Our exposition of the transfer principle follows~\cite[Sections 2.5--2.6]{bm03}.

\begin{thm*}
  Let $\Mm$ be a model category cofibrantly generated by the sets\/ $I$ and\/ $J$ of generating cofibrations and generating trivial cofibrations respectively.
  Let $\Cc$ be a category with finite limits and small colimits.
  Let
  \begin{center}
    \begin{tikzcd}
      \Mm \arrow[r,shift left=.5ex,"L"] &
      \Cc \arrow[l,shift left=.5ex,"R"]
    \end{tikzcd}
  \end{center}
  be a pair of adjoint functors. 
  Define a map $f$ in\/ $\Cc$ to be a weak equivalence \textup(respectively fibration\textup) if $R(f)$ is a weak equivalence \textup(respectively fibration\textup).
  These two classes determine a model category structure on\/ $\Cc$ cofibrantly generated by $L(I)$ and $L(J)$ provided that:
  \begin{enumerate}
    \item\label{transfer-small} The sets $L(I)$ and $L(J)$ permit the small object argument.
    \item\label{transfer-functorial} $\Cc$ has a functorial fibrant replacement and a functorial path object for fibrant objects.
  \end{enumerate}
  Furthermore, with this model structure on\/ $\Cc$, the adjunction $L \dashv R$ becomes a Quillen adjunction.
\end{thm*}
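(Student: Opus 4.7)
The plan is to apply Kan's Theorem from the excerpt. I take weak equivalences in $\Cc$ to be $\Ww := \{f \mid R(f) \text{ is a weak equivalence in } \Mm\}$ and take $L(I)$, $L(J)$ as candidate generating (trivial) cofibrations; then I verify Kan's five conditions. Conditions (1)--(3) of Kan are immediate: $\Ww$ inherits two-out-of-three and closure under retracts from $\Mm$ by functoriality of $R$, and the smallness of the domains of $L(I)$ and $L(J)$ relative to the corresponding cell complexes is precisely hypothesis (\ref{transfer-small}) of the transfer statement.

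For Kan's condition (5), $\cl{L(I)}{inj} = \Ww \cap \cl{L(J)}{inj}$, I would invoke the adjunction $L \dashv R$: a lifting problem of $L(i)$ against $f$ in $\Cc$ transposes to a lifting problem of $i$ against $R(f)$ in $\Mm$. Hence $f \in \cl{L(I)}{inj}$ iff $R(f) \in \cl{I}{inj}$, which by the cofibrant generation of $\Mm$ is equivalent to $R(f)$ being a trivial fibration in $\Mm$, i.e.\ to $f \in \Ww \cap \cl{L(J)}{inj}$ under our definitions. The parallel argument with $J$ in place of $I$ simultaneously identifies the fibrations of $\Cc$ with $\cl{L(J)}{inj}$, a fact I use immediately below.

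The main obstacle is Kan's condition (4), namely $\cl{L(J)}{cof} \subseteq \Ww$: for every relative $L(J)$-cell complex $j \colon X \to Y$, one must show $R(j)$ is a weak equivalence in $\Mm$. This is where hypothesis (\ref{transfer-functorial}) enters, via Quillen's path-object argument. Schematically, one replaces $Y$ by a fibrant object $Y^f$ using the functorial fibrant replacement, forms the functorial path object $Y^f \to \mathrm{Path}(Y^f) \to Y^f \times Y^f$ (available since $Y^f$ is fibrant), and solves a lifting problem against $j$ using that $\mathrm{Path}(Y^f) \to Y^f \times Y^f$ is a fibration in $\Cc$ (as identified in the previous paragraph). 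The resulting right homotopy data, transported through $R$, combined with the fact that $R$ takes the fibrant and path-object maps to the corresponding pieces in $\Mm$, forces $R(j)$ to be a weak equivalence via the two-out-of-three property in $\Mm$. Verifying that the data assembled in $\Cc$ really does map under $R$ to a valid path-object structure for $R(Y^f)$ in $\Mm$ is the technically delicate point where the functoriality assumption is indispensable.

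Finally, the Quillen adjunction assertion is then immediate: by construction $R$ preserves fibrations and weak equivalences, so $L \dashv R$ is a Quillen pair.
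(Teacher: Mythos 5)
The paper itself gives no proof of this statement: it is the Berger--Moerdijk transfer principle, recalled from \cite[Sections 2.5--2.6]{bm03}, so there is no in-paper argument to compare against. Your outline is essentially the standard proof from that source: Kan's recognition theorem, with the adjunction transposition showing $f\in\cl{L(I)}{inj}$ iff $R(f)\in\cl{I}{inj}$ (hence $\cl{L(I)}{inj}=\Ww\cap\cl{L(J)}{inj}$ and $\cl{L(J)}{inj}=$ fibrations), and Quillen's path-object argument for $\cl{L(J)}{cof}\subseteq\Ww$. Two points in the path-object step are under-specified in your sketch, though both are standard. First, you need the fibrant replacement of the \emph{domain} as well as of the codomain of $j\colon X\to Y$: since $X^{f}$ is fibrant and $j\in\cl{L(J)}{cof}=\mathrm{LLP}(\text{fibrations})$, a first lift produces $p\colon Y\to X^{f}$ with $p\circ j=r_{X}$; the lifting problem against $P(Y^{f})\to Y^{f}\times Y^{f}$ is then set up between the two maps $r_{Y}$ and $j^{f}\circ p\colon Y\to Y^{f}$, which agree after precomposition with $j$ -- it is exactly here that \emph{functoriality} of the replacement (supplying $j^{f}$ with $j^{f}\circ r_{X}=r_{Y}\circ j$) is used, and $R$ indeed carries $Y^{f}\to P(Y^{f})\to Y^{f}\times Y^{f}$ to a path object for $R(Y^{f})$ because $R$ preserves products and creates the weak equivalence and fibration. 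Second, the concluding step is not literally two-out-of-three: from the homotopy $H$ one gets (by two-out-of-three applied to the evaluation maps of the path object) that $j^{f}\circ p$ is a weak equivalence, and combined with $p\circ j=r_{X}$ being one, the conclusion that $R(j)$ is a weak equivalence requires the two-out-of-six property of weak equivalences in $\Mm$ (equivalently, saturation: a map is a weak equivalence iff it becomes invertible in $\mathrm{Ho}(\Mm)$). With these routine repairs your argument is precisely the proof the paper implicitly appeals to; the Quillen adjunction claim is immediate, as you say, since $R$ preserves fibrations and trivial fibrations by construction.
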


Recall that a path object for $X$ is a factorisation of the diagonal map $X \longrightarrow X \times X$ into a weak equivalence followed by a fibration $X\overset{\sim}\longrightarrow P(X) \twoheadrightarrow X\times X$. 
To apply the transfer theorem, we first need to show the existence of $r$-path objects for $n$-multicomplexes. 
For this, we adapt~\cite[Section 5]{celw18} to our context.

\subsection{Path objects for $n$-multicomplexes}

As with the $r$-cone, we start with constructions for bicomplexes and then extend to $n$-multicomplexes using the tensor product.

\begin{defn}[\cite{celw}]\label{cyl0}
  For $r=0$, we define the $0$-\textit{path} $\Lambda_0$ as the bicomplex
  \[\xymatrix{
      \kk^{0,1}\\
      ( \kk\oplus \kk)^{0,0}  \ar[u]_{{\hmat{-1}{1}}}.\\
  }\] 
  For $r\geq 1$, define the $r$-\textit{path} $\Lambda_r$ as the bicomplex whose underlying bigraded module is 
  $\kk\bgd{0}{0}\oplus \Zzw^2_{r}(0,0)$ and whose differentials coincide 
  with those of $\Zzw^2_{r}(0,0)$ except for $d_1\bgd{0}{0}$ which is:
  \[\xymatrix{
      \Zzw^2_{r}(0,0)\bgd{-1}{0}=\kk\bgd{-1}{0}&&
      (\kk\oplus \Zzw^2_{r}(0,0))\bgd{0}{0}=(\kk\oplus\kk)\bgd{0}{0}
    \ar[ll]_-{\hmat{-1}{1}}}.
  \] 
\end{defn}

\begin{exmp}
  The $1$-path $\Lambda_1$ is the bicomplex given by
  \[
    \kk\bgd{-1}{0}\stackrel{\hmat{-1}{1}}{\longleftarrow}(\kk\oplus\kk)\bgd{0}{0}.
  \]
  The $2$-path $\Lambda_2$ is given by
  \begin{center}
    \begin{tikzpicture}
      scale=0.15
      \matrix (m) [matrix of math nodes, row sep=2em,
      column sep=2em]{
        &  \kk^{-1,0} &(\kk\oplus\kk)^{0,0} \\
      \kk^{-2,-1} &\kk^{-1,-1}& \\};
      \path[-stealth]
      (m-1-3) edge  node[midway,above] {$\hmat{-1}{1}$} (m-1-2) 
      (m-2-2) edge   node[midway,right] {$\idmat$} (m-1-2) edge   node[midway,above] {$\idmat$} (m-2-1);
    \end{tikzpicture}
  \end{center}
\end{exmp}

More generally, we write
\[\Lambda_r=\kk \beta_-\oplus \bigoplus_{i=0}^{r-1} \kk \beta_{-i,-i} \bigoplus_{i=0}^{r-1} \kk \beta_{-i-1,-i}\]
where $\beta_{u,v}$ has bidegree $(u,v)$ and $\beta_-$ has bidegree $(0,0)$, with nonzero differentials given by
\[d_1(\beta_{0,0})=-d_1(\beta_-)=\beta_{-1,0},\; d_0(\beta_{-i,-i})=\beta_{-i,1-i}, \ d_1(\beta_{-i,-i})=\beta_{-i-1,-i},\]
for $1\leq i\leq r-1$.

\begin{lemma} For $r\geq 1$, there is an isomorphism of bicomplexes 
  $\varphi_r \colon \Lambda_r\rightarrow\kk\bgd{0}{0}\oplus C_r$ where
  $C_r=\Zzw^2_{r}(0,0)$ has been defined in Section \ref{S:model}.
\end{lemma}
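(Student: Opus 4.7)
The plan is to exhibit an explicit isomorphism by decomposing $\Lambda_r$ as an internal direct sum of two sub-bicomplexes: a trivial one-dimensional summand in bidegree $(0,0)$, and a copy of $C_r$.

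First, I would observe that the element $\eta := \beta_- + \beta_{0,0}$ is a cycle of bidegree $(0,0)$: we have $d_0(\beta_-) = d_0(\beta_{0,0}) = 0$ by construction, and from the defining relation $d_1(\beta_{0,0}) = -d_1(\beta_-) = \beta_{-1,0}$ we get $d_1(\eta) = \beta_{-1,0} - \beta_{-1,0} = 0$. Therefore $K := \kk\eta$ is a sub-bicomplex of $\Lambda_r$ concentrated in bidegree $(0,0)$ with trivial differentials, canonically isomorphic to $\kk\bgd{0}{0}$.

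Next, let $D \subset \Lambda_r$ be the submodule spanned by the $2r$ basis elements $\{\beta_{-i,-i}\}_{0 \le i \le r-1} \cup \{\beta_{-i-1,-i}\}_{0 \le i \le r-1}$. Inspecting the differentials of $\Lambda_r$ as listed in the definition, one verifies that $D$ is closed under both $d_0$ and $d_1$: for $i \ge 1$ one has $d_0(\beta_{-i,-i}) = \beta_{-i,1-i} = \beta_{-(i-1)-1,-(i-1)} \in D$ and $d_1(\beta_{-i,-i}) = \beta_{-i-1,-i} \in D$; for $i=0$, $d_0(\beta_{0,0}) = 0$ and $d_1(\beta_{0,0}) = \beta_{-1,0} \in D$; and $d_0, d_1$ vanish on the $\beta_{-i-1,-i}$. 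Crucially, none of these images involve $\beta_-$. Comparing with the staircase description of $C_r = \Zzw^2_r(0,0)$ recalled at the start of Section~\ref{S:model}, one sees that the assignment matching basis elements of the same bidegree labels gives an isomorphism of bicomplexes $D \cong C_r$.

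Finally, because $\beta_- = \eta - \beta_{0,0}$, the sum $K + D$ contains every basis element of $\Lambda_r$, and $K \cap D = 0$ since $\eta$ has a nonzero $\beta_-$-component whereas every element of $D$ does not. Hence $\Lambda_r = K \oplus D$ as bigraded $\kk$-modules, and since both summands are sub-bicomplexes this is a decomposition of bicomplexes. Composing with the isomorphisms $K \cong \kk\bgd{0}{0}$ and $D \cong C_r$ yields the desired $\varphi_r$. The only potential obstacle is keeping the indexing conventions straight when matching the differentials of $D$ to those of $C_r$; nothing deeper is involved.
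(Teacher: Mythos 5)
Your proof is correct and is essentially the paper's argument in disguise: the internal decomposition $\Lambda_r=\kk(\beta_-+\beta_{0,0})\oplus D$ with $D\cong C_r$ is exactly the inverse of the paper's explicit map $\varphi_r$, which sends $\beta_-\mapsto e-\beta_{0,0}$ and fixes the remaining generators, and your differential checks ($d_0,d_1$ kill $\beta_-+\beta_{0,0}$, and $D$ is a sub-bicomplex identified with $C_r$) are the same computations the paper performs.
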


\begin{proof} Let us keep the notation $\beta_{u,v}$ for both the generators of $\Lambda_r$ and $C_r$ and let $e$ be a generator of $\kk\bgd{0}{0}$. The map of bigraded modules $\varphi_r\colon \Lambda_r\rightarrow \kk\bgd{0}{0}\oplus C_r$ which associates $e-\beta_{0,0}$ to $\beta_-$ and
  $\beta_{u,v}$ to $\beta_{u,v}$ for $(u,v)\in\{(-i,-i), (-i-1,-i), 0\leq i\leq r-1\}$ is an isomorphism of bicomplexes since $\varphi_r d_0(\beta_-)=0=d_0(e-\beta_{0,0})$ and $\varphi_rd_1(\beta_-)=-\beta_{-1,0}=d_1(e-\beta_{0,0})$.
\end{proof}

Let us consider the following morphisms of bicomplexes
\begin{center}
  \begin{tikzcd}
    \kk\bgd{0}{0} \arrow[r, "\iota"] & \Lambda_r \arrow[rr, "\pi=\partial_-+\partial_+"] && (\kk\oplus \kk)\bgd{0}{0}
  \end{tikzcd}
\end{center}
where $\iota$ sends $e$ to $\beta_-+\beta_{0,0}$ and $\partial_-$ is the projection onto $\kk \beta_-$ and $\partial_+$ is the projection onto $\kk \beta_{0,0}$.

\begin{prop} \label{rpath}
  For $r\geq 0$, 
  \[\iota\colon \kk\bgd{0}{0}\rightarrow \Lambda_r\]
  is an $r$-homotopy equivalence.
\end{prop}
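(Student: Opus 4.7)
The plan is to show that $\partial_-$ is an $r$-homotopy inverse for $\iota$. First, one checks directly from the definitions that $\partial_- \circ \iota = \id_{\kk\bgd{0}{0}}$, since $\partial_-(\beta_- + \beta_{0,0}) = \beta_-$ (which we identify with $e$). The real content is producing an $r$-homotopy from $\id_{\Lambda_r}$ to $\iota \circ \partial_-$.

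For $r \geq 1$, I would exploit the bicomplex isomorphism $\varphi_r \colon \Lambda_r \xrightarrow{\cong} \kk\bgd{0}{0} \oplus C_r$ of the preceding lemma. Under $\varphi_r$, the map $\iota$ corresponds to the inclusion of the first summand and $\partial_-$ to the projection onto it, since $\varphi_r(\beta_- + \beta_{0,0}) = (e,0)$. Consequently $\id - \iota\partial_-$ corresponds to the composite of the projection onto $C_r$ followed by the inclusion back into $\kk\bgd{0}{0} \oplus C_r$. Now by \cite[Proposition 4.29]{celw} (invoked already in the proof of Proposition~\ref{Crtrivial}), $C_r$ is $r$-contractible, so there is an $r$-homotopy $H = \{H_m\}_{m \geq 0}$ on $C_r$ witnessing $\id_{C_r} \simeq_r 0$. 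Extending each $H_m$ by zero on the $\kk\bgd{0}{0}$ summand yields collection of maps on $\kk\bgd{0}{0} \oplus C_r$; because the defining relations of Definition~\ref{D:rhomotopy} are linear in the $h_m$ and decompose along the direct-sum decomposition (both the differentials of $\Lambda_r \cong \kk\bgd{0}{0} \oplus C_r$ and all the $H_m$ respect the splitting, with the trivial piece on $\kk\bgd{0}{0}$ contributing $0 = \id - \iota\partial_-$ there), this extension is an $r$-homotopy from $\id$ to $\iota\partial_-$. Transporting back along $\varphi_r^{-1}$ finishes the $r \geq 1$ case.

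For $r = 0$, $\Lambda_0$ does not fit the decomposition above and is small enough that I would just write down the homotopy by hand. Letting $\gamma$ denote the generator of $\kk^{0,1}$ in $\Lambda_0$, the bidegree $(0,-1)$ map $h_0 \colon \Lambda_0 \to \Lambda_0$ defined by $h_0(\gamma) = -\beta_{0,0}$ and vanishing elsewhere satisfies $d_0 h_0 + h_0 d_0 = \iota\partial_- - \id_{\Lambda_0}$, as one verifies on the three basis vectors $\beta_-, \beta_{0,0}, \gamma$; the relations for $m \geq 1$ are vacuous since $\Lambda_0$ has $d_i = 0$ for $i \geq 1$.

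I do not anticipate any substantive obstacle here: both the direct-sum extension argument and the tiny $r = 0$ computation are essentially formal once one has the isomorphism $\varphi_r$ and the $r$-contractibility of $C_r$ in hand. The only place requiring care is confirming that extending the contracting homotopy by zero on the split summand really does produce the homotopy from $\id$ to $\iota\partial_-$ with the correct signs dictated by Definition~\ref{D:rhomotopy}.
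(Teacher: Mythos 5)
Your proposal is correct and follows essentially the same route as the paper: for $r\geq 1$ it reduces via the isomorphism $\varphi_r\colon \Lambda_r\cong \kk\bgd{0}{0}\oplus C_r$ to the $r$-contractibility of $C_r$ from \cite[Proposition 4.29]{celw}, with your explicit ``extend the contracting homotopy by zero'' step just making the paper's ``direct consequence'' precise. Your hand computation for $r=0$ is a harmless variant of the paper's appeal to the $0$-contractibility of the two-term complex, and it checks out.
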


\begin{proof}  If $r\geq 1$, since an isomorphism is an $r$-homotopy equivalence, it is enough to prove that the composite $\varphi_r\iota=1_\kk\oplus 0 \colon \kk\bgd{0}{0}\oplus 0\rightarrow \kk\bgd{0}{0}\oplus C_r$  is a $r$-homotopy equivalence, which is a direct consequence of the $r$-contractibility of $C_r$ proven in Proposition 4.29 of \cite{celw}. Similarly, if $r=0$, then the bicomplex
  \[\xymatrix{
      \kk^{0,1}\\
      \kk^{0,0}  \ar[u]_{1}\\
  }\] 
  is $0$-contractible and the proof follows.
\end{proof}

\begin{defn} For $A$ an $n$-multicomplex, the \emph{$r$-path object} $P_r(A)$ is the $n$-multicomplex
  $\Lambda_r\otimes A$.
  We denote by $\iota_A$ and $\pi_A$ the maps $\iota\otimes 1_A$ and $\pi\otimes 1_A$ so that the diagonal of $A$ factors as
  \[\xymatrix{  A \ar[r]^-{\iota_A}&P_r(A) \ar[r]^-{\pi_A}&A\oplus A}.\]
This construction is functorial, with $P_r(f)=1_{\Lambda_r}\otimes f \colon P_r(A)\to P_r(B)$, for $f \colon A\to B$ a morphism of 
$n$-multicomplexes.
\end{defn}

\begin{rem}\label{R:P_r} As a bigraded module we have
  \begin{align*}
    P_0(A)^{p,q}=&A^{p,q}\oplus A^{p,q-1}\oplus A^{p,q} \\
    P_r(A)^{p,q}=&A^{p,q}\oplus\bigoplus_{i=0}^{r-1} A^{p+i,q+i}\oplus
    {\bigoplus_{i=0}^{r-1} }A^{p+i+1,q+i},\ \text{ for } r\geq 1.
  \end{align*}
\end{rem}

\begin{prop}\label{pathspace} 
  Let $A$ be an $n$-multicomplex and $r\geq 0$.
  The path object $P_r(A)$ is an $r$-path object for $A$. 
  Indeed, the map $\iota_A \colon A\longrightarrow P_r(A)$ is an $r$-homotopy equivalence, hence an $E_r$-quasi-isomorphism and the map $\pi_A \colon P_r(A)\rightarrow A\oplus A$ is an $r$-fibration in the model structure of Theorem~\ref{modelE}.
\end{prop}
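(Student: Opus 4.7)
The plan is to deduce both claims by reducing to facts already established for $C_r\otimes A$, using the bicomplex isomorphism $\varphi_r\colon \Lambda_r\xrightarrow{\cong} \kk^{0,0}\oplus C_r$ from the lemma preceding \cref{rpath} (with a parallel argument when $r=0$ using an analogous decomposition of $\Lambda_0$ into $\kk^{0,0}$ and a $0$-contractible piece). Tensoring with $A$ yields an isomorphism of $n$-multicomplexes $\Phi=\varphi_r\otimes 1_A\colon P_r(A)\xrightarrow{\cong} A\oplus (C_r\otimes A)$ under which $\iota_A$ corresponds to the inclusion of the first summand, and, after post-composing with the $\kk$-linear automorphism $(u,v)\mapsto (u,v-u)$ of the target $A\oplus A$, $\pi_A$ matches the direct sum $1_A\oplus \phi_r$, where $\phi_r\colon C_r\otimes A\to A$ is the projection of \cref{coneetfib}.

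For the $r$-homotopy equivalence claim, the inclusion $A\hookrightarrow A\oplus(C_r\otimes A)$ admits the first-factor projection as a homotopy inverse, the only nontrivial check being $1_A\oplus 1_{C_r\otimes A}\simeq_r 1_A\oplus 0_{C_r\otimes A}$, which reduces to $r$-contractibility of $C_r\otimes A$. This contractibility follows exactly as in the proof of \cref{Crtrivial}: $C_r$ is $r$-contractible by \cite[Prop.~4.29]{celw}, and tensoring its $r$-contracting homotopy with $1_A$ in the symmetric monoidal category $\ncpx$ produces an $r$-contracting homotopy of $C_r\otimes A$. Since $r$-homotopy equivalences are $E_r$-quasi-isomorphisms (by the remark at the end of \cref{S:preliminaries}), $\iota_A$ is an $E_r$-quasi-isomorphism.

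For the fibration claim, it suffices to show $1_A\oplus \phi_r$ is an $r$-fibration, as post-composition with an isomorphism preserves fibrations. Because the functors $E_i$ and the property of bidegree-wise surjectivity both commute with finite direct sums, this reduces further to showing that $\phi_r$ (and trivially $1_A$) is an $r$-fibration. From \cref{coneetfib}, $ZW_k(\phi_r)$ is surjective for $0\le k\le r$; combining the surjectivity $\phi_r=ZW_0(\phi_r)$ with an inductive application of \cref{mainobs} upgrades this to the surjectivity of $E_k(\phi_r)$ for $0\le k\le r$, which is precisely the fibration condition in \cref{modelE}.

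The main subtlety is the verification that $\pi_A$ corresponds to $1_A\oplus \phi_r$ under $\Phi$ and the claimed change of basis: a short direct computation, using $\varphi_r^{-1}(e)=\beta_-+\beta_{0,0}$ and $\varphi_r^{-1}(\beta_{0,0})=\beta_{0,0}$ (and identity on the remaining basis of $C_r$), shows that $\pi\circ\varphi_r^{-1}$ sends $e\mapsto (1,1)$, $\beta_{0,0}\mapsto (0,1)$, and every other generator of $C_r$ to $0$; post-composing with $(u,v)\mapsto (u,v-u)$ converts this into $1_\kk\oplus \mathrm{ev}_r$, where $\mathrm{ev}_r\colon C_r\to \kk^{0,0}$ is the underlying map of $\phi_r$.
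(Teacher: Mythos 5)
Your proof is correct and follows essentially the same route as the paper: the paper also identifies $\pi_A$ with $1_A\oplus\phi_r$ via $\varphi_r\otimes 1_A$ and the automorphism $\left(\begin{smallmatrix}1_A&0\\-1_A&1_A\end{smallmatrix}\right)$ of $A\oplus A$, and then invokes Proposition~\ref{coneetfib} together with Remark~\ref{mainobs} for the fibration claim. The only cosmetic difference is that for the first assertion the paper simply cites Proposition~\ref{rpath} (tensoring the $r$-homotopy equivalence $\iota$ with $1_A$), whereas you unwind that proposition's proof by hand through the decomposition $A\oplus(C_r\otimes A)$ and the $r$-contractibility of $C_r\otimes A$.
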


\begin{proof} That $\iota_A$ is an $r$-homotopy equivalence is a direct consequence of Proposition \ref{rpath}. For the second assertion, the case $r=0$ is trivial and for $r\geq 1$, we consider the following commutative diagram of $n$-multicomplexes
 \begin{center}
     \begin{tikzcd}
	P_r(A)\arrow[d,"\varphi_r\otimes 1_A"] \arrow[r, "\pi_A"] 
& A\oplus A\arrow[d,"\mbox{\tiny{$\begin{pmatrix} 1_A \amsamp 0\\-1_A\amsamp 1_A\end{pmatrix}$}}"] \\
  A\oplus (C_r\otimes A)\arrow[r, "1_A\oplus\phi_r"] & A\oplus A
\end{tikzcd} 
\end{center}

  The vertical maps are isomorphisms, hence $\pi_A$ is an $r$-fibration if and only if $1_A\oplus\phi_r$ is an $r$-fibration, which is so by Proposition~\ref{coneetfib} together with Remark~\ref{mainobs}.
\end{proof}

\begin{rem}
A path object for $n$-multicomplexes when $n=\infty$ is given in \cite[Section 3.4]{celw18}.
\end{rem}

\subsection{Model structures on bounded $n$-multicomplexes}

For $2\leq n\leq \infty$, recall that $\ncpx$ denotes the category of $(\Z,\Z)$-graded $n$-multicomplexes  of $\kk$-modules. 
The categories of $(-\N,\Z)$-graded (left half-plane) and  $(\Z,\N)$-graded (upper half-plane) $n$-multicomplexes of $\kk$-modules will be denoted by $\nCh_{\text{-}\N,\Z}$ and $\nCh_{\Z,\N}$,
 respectively.

By  Proposition~\ref{P:Cn-modules}, the category of $n$-multicomplexes is isomorphic to the category of $\mathcal{C}_n$-modules in vertical bicomplexes, previously denoted $\mathcal{C}_n\Mod$. In this section, we will write
 $(\mathcal{C}_n\Mod)_{\Z,\Z}$ when we want to emphasize the $(\Z,\Z)$-grading.

Similarly, the category $ \nCh_{\text{-}\N,\Z}$ is isomorphic to the category of $\mathcal{C}_n$-modules $M$ in vertical bicomplexes concentrated in bidegrees lying in the left half-plane (i.e., with $M^{p,q}=0$ if $p>0$), where the latter is denoted by  $(\mathcal{C}_n\Mod)_{\text{-}\N,\Z}$.

We show that the  inclusion functor from $\nCh_{\text{-}\N,\Z}$ to $\ncpx$ has a left adjoint by showing that the corresponding  inclusion functor from 
$ (\mathcal{C}_n\Mod)_{\text{-}\N,\Z}$ to  $(\mathcal{C}_n\Mod)_{\Z,\Z}$ has a left adjoint.

Let $2\leq n\leq \infty$ and let  $(M, d_0^M, \lambda)$ be a $\mathcal{C}_n$-module, where  $\lambda\colon \mathcal{C}_{n}\otimes M\longrightarrow M$ denotes the module action.
Let $M_{\leq 0}$ and $M_{>0}$ denote the vertical bicomplexes given by
\[
M_{\leq 0}^{p,q}=\begin{cases}
0&\text{ if } p>0\\
M^{p,q} & \text{ if } p\leq 0,
\end{cases}\qquad\text{ and }\qquad
M_{> 0}^{p,q}=\begin{cases}
M^{p,q}&\text{ if } p> 0\\
0& \text{ if } p\leq 0.
\end{cases}
\]
It is clear that $M_{\leq 0}$ is a $\mathcal C_n$-submodule of $M$, that $M_{>0}$ is not, but $\lambda(\mathcal C_n\otimes M_{>0})$ is. Hence the intersection $\lambda(\mathcal{C}_{n}\otimes M_{>0})\cap M_{\leq 0}$ is a $\mathcal{C}_n$-submodule of $M_{\leq 0}$.

\begin{lemma}\label{L:pi}
  The projection $\pi \colon M\rightarrow M_{\leq 0}/(\lambda(\mathcal{C}_{n}\otimes M_{>0})\cap M_{\leq 0})$ which maps $m$ to $0$ if $m\in M_{>0}$ and to its class if $m\in M_{\leq 0}$ is a morphism of $\mathcal C_n$-modules.
\end{lemma}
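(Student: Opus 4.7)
The plan is to use the bigraded decomposition $M = M_{\leq 0} \oplus M_{>0}$ of the underlying bigraded $\kk$-module. Any $m \in M$ decomposes uniquely as $m = m_- + m_+$ with $m_- \in M_{\leq 0}$ and $m_+ \in M_{>0}$, and under this decomposition $\pi$ is just the projection onto $M_{\leq 0}$ followed by the quotient map to $M_{\leq 0}/(\lambda(\mathcal{C}_n \otimes M_{>0}) \cap M_{\leq 0})$. I will check that $\pi$ commutes with the two structures defining a $\mathcal{C}_n$-module, namely the vertical differential $d_0$ and the module action $\lambda$.

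For the $d_0$-compatibility, I would note that $d_0$ has bidegree $(0,1)$, so it preserves both $M_{\leq 0}$ and $M_{>0}$ as bigraded submodules. Consequently $d_0(m) = d_0(m_-) + d_0(m_+)$ is already the direct sum decomposition of $d_0(m)$, giving $\pi(d_0(m)) = [d_0(m_-)] = d_0([m_-]) = d_0(\pi(m))$, where the middle equality uses the fact that the $d_0$ on the quotient is well-defined, which in turn follows from $d_0$ being a derivation and thus preserving $\lambda(\mathcal C_n \otimes M_{>0}) \cap M_{\leq 0}$.

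For the action compatibility, let $c \in \mathcal{C}_n$ and $m = m_- + m_+$ in $M$. The key observation is that every element of $\mathcal{C}_n$ has first bidegree $\leq 0$ (since the generators $d_i$ have bidegree $(-i, 1-i)$ with $i \geq 1$, and $1$ has bidegree $(0,0)$), so $\lambda(c \otimes m_-) \in M_{\leq 0}$. On the other hand, $\lambda(c \otimes m_+)$ splits as $y_- + y_+$, and by construction $y_- \in \lambda(\mathcal{C}_n \otimes M_{>0}) \cap M_{\leq 0}$. Therefore
\[
\pi(\lambda(c \otimes m)) = [\lambda(c \otimes m_-) + y_-] = [\lambda(c \otimes m_-)] = \lambda(c \otimes [m_-]) = \lambda(c \otimes \pi(m)),
\]
which is precisely the required compatibility. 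The main (minor) obstacle is to notice that the quotient by $\lambda(\mathcal{C}_n \otimes M_{>0}) \cap M_{\leq 0}$ is exactly engineered to absorb the $M_{\leq 0}$-component $y_-$ produced by acting on $M_{>0}$; the rest is a bookkeeping exercise in degree counting.
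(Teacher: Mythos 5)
Your proof is correct and follows essentially the same route as the paper: both rest on the observations that $\mathcal{C}_n$ cannot raise the first degree (so $M_{\leq 0}$ is a submodule) and that any component of $\lambda(c\otimes m_+)$ landing in $M_{\leq 0}$ lies in $\lambda(\mathcal{C}_n\otimes M_{>0})\cap M_{\leq 0}$ and hence vanishes in the quotient. The only difference is presentational --- you phrase it via the decomposition $m=m_-+m_+$ and also spell out the $d_0$-compatibility, whereas the paper argues by cases on homogeneous $m\in M_{>0}$ or $m\in M_{\leq 0}$ and leaves the differential implicit.
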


\begin{proof} For $m\in M$ and $x\in \mathcal C_n$, let us write $x\cdot m$ for $\lambda(x\otimes m)$. 

  Assume $m\in M_{>0}$.
  If $x\cdot m\in M_{>0}$, then $\pi(x\cdot m)=0=x\cdot \pi(m)$. If $x\cdot m\in M_{\leq 0}$, then $x\cdot m\in \lambda(\mathcal{C}_{n}\otimes M_{>0})\cap M_{\leq 0}$, hence $\pi(x\cdot m)=0=x\cdot\pi(m)$.

  Assume $m\in M_{\leq 0}$. Since $M_{\leq 0}$ is a $\mathcal C_n$-submodule of $M$,  $\pi(x\cdot m)=x\cdot \pi(m)$.
\end{proof}

\begin{prop}\label{P:leftadj}
The natural inclusion functor
$i\colon (\mathcal{C}_n\Mod)_{\text{-}\N,\Z}\longrightarrow (\mathcal{C}_n\Mod)_{\Z,\Z}$ has a left adjoint $t$ given on objects by
\[
  t(M)=M_{\leq 0}/(\lambda(\mathcal{C}_{n}\otimes M_{>0})\cap M_{\leq 0})~ \text{ for a $\mathcal{C}_n$-module $M$,}
\]
and on morphisms by sending a map of $\mathcal C_n$-modules to the induced map on the subquotient.
\end{prop}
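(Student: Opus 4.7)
The plan is to verify two things: that $t$ is a well-defined functor landing in $(\mathcal C_n\Mod)_{\text{-}\N,\Z}$, and that there is a natural bijection
\[
  \mathrm{Hom}_{(\mathcal C_n\Mod)_{\text{-}\N,\Z}}(t(M), N) \cong \mathrm{Hom}_{(\mathcal C_n\Mod)_{\Z,\Z}}(M, i(N)),
\]
witnessing $t\dashv i$.

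For well-definedness, I would first observe that $M_{\leq 0}$ is a $\mathcal C_n$-submodule of $M$ in vertical bicomplexes: the differential $d_0^M$ preserves $p$-degree, and every homogeneous element of $\mathcal C_n$ has non-positive first bidegree (the generators $d_j$ have bidegree $(-j,1-j)$ with $j\geq 1$, and the unit has bidegree $(0,0)$), so the action sends $M_{\leq 0}$ into itself. On the other hand, $\lambda(\mathcal C_n \otimes M_{>0})$ is a $\mathcal C_n$-submodule of $M$, being closed under the action of $\mathcal C_n$ by associativity and under $d_0^M$ by compatibility of $\lambda$ with the differentials. Hence the intersection $\lambda(\mathcal C_n \otimes M_{>0})\cap M_{\leq 0}$ is a $\mathcal C_n$-submodule of $M_{\leq 0}$, and $t(M)$ is a $\mathcal C_n$-module concentrated in $p\leq 0$, i.e., an object of $(\mathcal C_n\Mod)_{\text{-}\N,\Z}$. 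Functoriality is routine: a morphism $f\colon M \to M'$ has bidegree $(0,0)$ and intertwines the action, so it sends $M_{\leq 0}$ into $M'_{\leq 0}$ and $\lambda(\mathcal C_n\otimes M_{>0})$ into $\lambda(\mathcal C_n\otimes M'_{>0})$, inducing a unique map on the subquotient.

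For the adjunction, the projection $\pi_M\colon M \to i(t(M))$ from Lemma~\ref{L:pi} will serve as the unit. Given any morphism $\phi\colon M \to i(N)$ with $N \in (\mathcal C_n\Mod)_{\text{-}\N,\Z}$, the fact that $N$ is concentrated in $p\leq 0$ and $\phi$ has bidegree $(0,0)$ forces $\phi(M_{>0}) = 0$. Then the $\mathcal C_n$-linearity of $\phi$ yields $\phi(\lambda(x\otimes m)) = x\cdot \phi(m) = 0$ for all $x\in \mathcal C_n$ and $m\in M_{>0}$, so $\phi$ annihilates $\lambda(\mathcal C_n \otimes M_{>0})\cap M_{\leq 0}$ and factors uniquely as $\phi = \hat\phi \circ \pi_M$ for some $\hat\phi\colon t(M)\to N$. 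The assignments $\hat\phi\mapsto \hat\phi\circ\pi_M$ and $\phi\mapsto \hat\phi$ are manifestly mutually inverse, and naturality in both variables is immediate from the universal property of the quotient.

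The proof is essentially formal once the definitions are unpacked. The only subtle point---really just bookkeeping---is verifying that $\lambda(\mathcal C_n \otimes M_{>0})\cap M_{\leq 0}$ is closed under both the $\mathcal C_n$-action and $d_0^M$; this reduces to a bidegree analysis of the generators of $\mathcal C_n$ together with the derivation property of $\delta_0$. Everything else follows from the fact that any morphism into a module supported in the left half-plane must kill not only $M_{>0}$ but also the entire $\mathcal C_n$-submodule it generates, and $t(M)$ is by construction the largest quotient of $M_{\leq 0}$ with this property.
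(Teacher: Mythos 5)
Your proof is correct and follows essentially the same route as the paper: the submodule observations about $M_{\leq 0}$ and $\lambda(\mathcal C_n\otimes M_{>0})$, the projection $\pi$ of Lemma~\ref{L:pi} serving as the unit, and the factoring of any morphism $M\to i(N)$ through $t(M)$ because it must kill $M_{>0}$ and hence $\lambda(\mathcal C_n\otimes M_{>0})\cap M_{\leq 0}$. No gaps; your write-up just makes the well-definedness and naturality checks slightly more explicit than the paper does.
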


\begin{proof}
Let $M\in (\mathcal{C}_n\Mod)_{\Z,\Z} $ and $N\in (\mathcal{C}_n\Mod)_{\text{-}\N,\Z}$.
Given a morphism $\tilde{f}\colon t(M)\longrightarrow N$ in $ (\mathcal{C}_n\Mod)_{\text{-}\N,\Z}$, consider the composite
\[
f\colon M\overset{\pi}{\longrightarrow}t(M)\overset{\tilde{f}}\longrightarrow i(N)=N,
\]
where $\pi$ is the morphism of $\mathcal C_n$-modules defined in Lemma \ref{L:pi}, so that  $f$ is a morphism of $\mathcal{C}_n$-modules. On the other hand, if $f\colon M\longrightarrow i(N)=N$ is a morphism of $\mathcal{C}_n$-modules, then $M_{>0}\subseteq \ker f$ and $\lambda(\mathcal{C}_{n}\otimes M_{>0})\cap M_{\leq 0}\subseteq \ker f$.
Hence, $f$ induces a morphism $\tilde{f}\colon t(M)\longrightarrow N$ such that $f=\tilde{f}\pi$.
\end{proof}

Theorem~\ref{modelE} shows that for each $r \geq 0$, there is a cofibrantly generated model structure on $\nCh_{\Z,\Z}$ where a map $f$ is a weak equivalence if it is an $E_r$-quasi-isomorphism, and a fibration if $E_i(f)$ is surjective
for $0\leq i\leq r$.
The generating cofibrations and generating trivial cofibrations are denoted $(I^n_r)'$ and $(J^n_r)'$ respectively. An application of the transfer theorem gives the following.

\begin{prop}\label{mczn}
  For each $r \geq 0$, there is a cofibrantly generated model structure on\/ $\nCh_{\text{-}\N,\Z}$, where 
  \begin{enumerate}
    \item weak equivalences are $E_r$-quasi-isomorphisms, 
    \item fibrations are morphisms of $n$-multicomplexes $f \colon A \to B$ such that $E_i(f)$ is  bidegree-wise surjective for every $0\leq i\leq r$, and
    \item the generating cofibrations and generating trivial cofibrations are $t(I^n_r)'$ and $t(J^n_r)'$ respectively.
  \end{enumerate}
\end{prop}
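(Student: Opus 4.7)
The plan is to apply the transfer theorem at the start of \cref{S:bounded} to the adjunction
\[
\begin{tikzcd}
\nCh_{\Z,\Z} \arrow[r, shift left, "t"] & \nCh_{\text{-}\N,\Z} \arrow[l, shift left, "i"]
\end{tikzcd}
\]
provided by \cref{P:leftadj} under the identifications of \cref{P:Cn-modules}, with $\Mm = \nCh_{\Z,\Z}$ endowed with the $r$-model structure of \cref{modelE} and $\Cc = \nCh_{\text{-}\N,\Z}$. The latter category is complete and cocomplete, and since the right adjoint $i$ is the inclusion, the transferred weak equivalences and fibrations are exactly those in the statement, with generating cofibrations $t(I^n_r)'$ and generating trivial cofibrations $t(J^n_r)'$.

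For condition~(\ref{transfer-small}) of the transfer theorem, I would observe that $\nCh_{\text{-}\N,\Z}$, being the category of modules over the dg algebra $\mathcal{C}_n$ in the locally presentable category of vertical bicomplexes concentrated in the left half-plane, is itself locally presentable; hence every object, and in particular every domain of a map in $t(I^n_r)'$ or $t(J^n_r)'$, is small relative to any class of maps, so the small object argument applies.

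For condition~(\ref{transfer-functorial}), I would first note that all objects of $\nCh_{\text{-}\N,\Z}$ are fibrant: the map $A\to 0$ trivially satisfies surjectivity of $E_i$ for all $i$, so the identity functor gives a functorial fibrant replacement. For functorial path objects I would use $P_r(A)=\Lambda_r\otimes A$ from the previous subsection. The key check is that this construction preserves $(-\N,\Z)$-gradings: by \cref{R:P_r}, every summand of $P_r(A)^{p,q}$ has the form $A^{p',q'}$ with $p'\geq p$, so $P_r(A)^{p,q}=0$ whenever $p>0$ and $A$ is concentrated in the left half-plane. The maps $\iota_A$ and $\pi_A$ are natural in $A$ and, by \cref{pathspace}, exhibit $P_r(A)$ as an $r$-path object in $\nCh_{\Z,\Z}$; applying $i$ (which is merely inclusion) transfers these properties to $\nCh_{\text{-}\N,\Z}$.

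The only nontrivial point is the verification that $P_r$ restricts to an endofunctor on the left half-plane, which the degree count above handles directly; everything else is routine bookkeeping for the transfer principle. I would therefore expect the proof to be short once this observation is in place.
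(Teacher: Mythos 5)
Your proposal is correct and takes essentially the same route as the paper: apply the transfer theorem to the adjunction $t \dashv i$ of \cref{P:leftadj}, note that every object is fibrant, and use $P_r(A)=\Lambda_r\otimes A$ as the functorial path object, which stays in $\nCh_{\text{-}\N,\Z}$ by the degree count of \cref{R:P_r}. The only cosmetic difference is the smallness condition, which you justify via local presentability while the paper simply observes that $t$ preserves small objects; either suffices.
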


\begin{proof}
  We apply the transfer theorem to the adjunction $t \dashv i$ of Proposition~\ref{P:leftadj}.
The descriptions of the weak equivalences and fibrations are immediate as long as the transfer theorem holds. 
  We check the conditions (\ref{transfer-small}) and (\ref{transfer-functorial}) in the transfer theorem. Every $n$-multicomplex is $r$-fibrant, so the first part of (\ref{transfer-functorial}) trivially holds. 
  Condition (\ref{transfer-small})  holds as the functor $t$  preserves small objects. 
  It remains to find functorial path objects for $(-\N,\Z)$-graded $n$-multicomplexes. These exist because if  $A\in \nCh_{\text{-}\N,\Z}$, then $P_r(A)\in  \nCh_{\text{-}\N,\Z}$ by Remark~\ref{R:P_r}.
\end{proof}

It is also possible to transfer the model category structure to the upper half-plane in the case $r=0$. 
Similarly to above, we prove that the inclusion functor from $\nCh_{\Z,\N}$ to $\ncpx$ has a left adjoint.

\begin{prop}
For  $A \in \ncpx$, there is a $(\Z, \N)$-graded $n$-multicomplex given by
\[
  (t^{\prime}A)^{p,q} = 
  \begin{cases}
    A^{p,q} & q>0 \\
    A^{p,0}/d_0(A^{p,-1}) & q=0\\
    0 &q<0,
  \end{cases}
\]
with structure maps $d_i$ induced from those of $A$. Furthermore, this construction is functorial and
there exists an adjunction
\begin{center}
  \begin{tikzcd}
    \ncpx \arrow[r,shift left=.5ex,"t^{\prime}"] &
    \nCh_{\Z,\N} \arrow[l,shift left=.5ex,"i"]
  \end{tikzcd}
\end{center}
where $i$ is the natural inclusion functor and the functor $t^{\prime}$ is its left adjoint.

\end{prop}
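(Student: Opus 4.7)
The plan is to first equip $t'A$ with structure maps making it an $n$-multicomplex, and then to establish the universal property of $t'$ as left adjoint to $i$. Since $d_i^A$ has bidegree $(-i, 1-i)$, I would define the induced map $d_i^{t'A} \colon (t'A)^{p,q} \to (t'A)^{p-i, q+1-i}$ as follows: if $q \geq 1$ and $q+1-i \geq 1$, take $d_i^A$ unchanged; if the target second coordinate is $0$, post-compose $d_i^A$ with the quotient projection; if it is negative, take the zero map. Two well-definedness checks arise at the boundary $q = 0$: for $i = 0$ the descent to the quotient $A^{p,0}/d_0(A^{p,-1})$ uses $d_0^2 = 0$, and for $i = 1$ it uses the $l = 1$ relation $d_0 d_1 = d_1 d_0$, which gives $d_1(d_0(A^{p,-1})) \subseteq d_0(A^{p-1,-1})$. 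For $i \geq 2$ the target is $0$, so no check is needed.

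Next I would verify the multicomplex relations $\sum_{i+j=l}(-1)^i d_i^{t'A} d_j^{t'A} = 0$ at each source $(p,q)$. For $l \leq q+1$, all intermediate bidegrees lie where $t'A$ agrees with $A$ (possibly modulo the final projection), so the relation follows directly from its counterpart in $A$. For $l \geq q+3$, the target $(p-l, q+2-l)$ is zero in $t'A$, so every term of the sum vanishes individually. The critical case is $l = q+2$, where the target is $(p-l, 0)$: here the single term $d_0 d_l$ passes through the missing row $q = -1$ and therefore vanishes in $t'A$, while every other term is represented by some $[d_i^A d_j^A(a)]$. Rearranging the $A$-relation shows that the sum of these surviving terms equals $-[d_0^A d_l^A(a)]$, which lies in $d_0(A^{p-l,-1})$ and hence is zero in $(t'A)^{p-l, 0}$, as required.

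Functoriality of $t'$ is then immediate, since morphisms of $n$-multicomplexes commute with $d_0$ and so preserve the relevant submodules. For the adjunction, let $\pi \colon A \to i(t'A)$ denote the canonical map (identity for $q \geq 1$, quotient projection at $q = 0$, zero otherwise); by construction of the $d_i^{t'A}$ it is a morphism in $\ncpx$. Any morphism $f \colon A \to i(B)$ in $\ncpx$ must vanish on $A^{p,-1}$ since $B^{p,-1} = 0$, hence $f(d_0(A^{p,-1})) = d_0 f(A^{p,-1}) = 0$ in $B^{p,0}$, and so $f$ factors uniquely through $\pi$ as $f = \tilde f \circ \pi$ with $\tilde f \colon t'A \to B$ in $\nCh_{\Z,\N}$. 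This yields the natural bijection $\nCh_{\Z,\N}(t'A, B) \cong \ncpx(A, i(B))$. The main obstacle is the boundary analysis for $l = q+2$: one has to carefully identify which terms of the multicomplex relation get truncated and to confirm that the surviving discrepancy lies in the image of $d_0$.
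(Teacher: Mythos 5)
Your argument is correct, and the adjunction half (factor any $f\colon A\to i(B)$ through the canonical projection because $f$ kills $A^{*,q}$ for $q<0$ and hence $d_0(A^{*,-1})$) is essentially the paper's. Where you differ is in how $t'A$ is shown to be an $n$-multicomplex: you define the induced $d_i$ by hand, check well-definedness at the row $q=0$ using $d_0^2=0$ and $d_0d_1=d_1d_0$, and then verify the relations $\sum_{i+j=l}(-1)^id_id_j=0$ case by case, the only delicate case being $l=q+2$, where you correctly observe that the truncated term $d_0d_{q+2}$ accounts exactly for the discrepancy, which lies in $d_0(A^{*,-1})$ and so dies in the quotient. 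The paper instead invokes its reinterpretation of $n$-multicomplexes as $\mathcal{C}_n$-modules in vertical bicomplexes (Proposition~\ref{P:Cn-modules}): it checks that $A_{q<0}\oplus d_0(A_{*,-1})$ is a $\mathcal{C}_n$-submodule of $A$ (the same two boundary identities you use), whence $t'A$ is the quotient module and the multicomplex relations are inherited automatically, with no analogue of your $l=q+2$ analysis needed. Your route is more elementary and self-contained; the paper's is shorter because relations descend for free along a quotient of modules. One small imprecision in your case split: for $l=q+1$ the term $d_0d_{q+1}$ passes through the quotient row $q=0$ at the intermediate stage, so it is not literally true that all intermediate bidegrees lie where $t'A$ agrees with $A$; since the induced maps there are computed on representatives, the relation still follows from the one in $A$, so this does not affect the proof.
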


\begin{proof}
We  check that for any $A\in \ncpx$, $t^{\prime}(A)$ is an  $n$-multicomplex.
Consider  $A$ as a $\mathcal{C}_n$-module $(A,d_0, \lambda)$  in a natural way (see  Proposition~\ref{P:Cn-modules}).
Let $A_{*,-1}$ and $A_{q<0}$ denote the following bigraded $R$-modules 
\[
A_{*,-1}^{p,q}=\begin{cases}
A^{p,-1}~&\text{if}~q=-1\\
0~&\text{otherwise}
\end{cases}~~\text{and}~~
A_{q<0 }^{p,q}=\begin{cases}
A^{p,q}~&\text{if}~q<0\\
0~&\text{otherwise}.
\end{cases}
\]
These are not  vertical bicomplexes in general, but $A_{q<0}\oplus d_0(A_{*,-1})$ is. Furthermore,  this
  is a $\mathcal{C}_n$-submodule of $A$ and the quotient  $A/(A_{q<0}\oplus d_0(A_{*,-1}))$ is a $\mathcal{C}_n$-module which corresponds to $t^{\prime}(A)$. Hence $t^{\prime}(A)$ is an $n$-multicomplex.

The functor $t^{\prime}$ is a left adjoint.
Let $\pi\colon A\longrightarrow t^{\prime}(A)$ be the projection  in $\ncpx$. For  $B\in \nCh_{\Z,\N}$,     a morphism $f \colon A \to i(B)$ in $\ncpx$ satisfies $d_0 f (A_{*,-1}) = f d_0 (A_{*,-1}) = 0$ and $f(A_{q<0})=0$. Hence $A_{q<0}\oplus d_0(A_{*,-1})$ is contained in $\ker f$ which implies that $f$ corresponds to a well defined morphism $\tilde{f}\colon t^{\prime}(A) \longrightarrow B$ such that  $ f= \tilde{f}\pi$.
\end{proof}

\begin{prop}\label{P:upperhalfplane}
  For  $r = 0$, there is a cofibrantly generated model structure on\/ $\nCh_{\Z,\N}$, where 
  \begin{enumerate}
    \item weak equivalences are $E_0$-quasi-isomorphisms, 
    \item fibrations are morphisms of $n$-multicomplexes $f \colon A \to B$ such that $f$ is bidegreewise surjective, and
    \item the generating cofibrations and generating trivial cofibrations are {$t^{\prime}I^n_0$ and $t^{\prime}J^n_0$ respectively}.
  \end{enumerate}
\end{prop}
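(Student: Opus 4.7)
The plan is to apply the transfer theorem stated at the start of \cref{S:bounded} to the adjunction $t' \dashv i$ just established, transferring the $r=0$ model structure on $\ncpx$ from \cref{modelE}. Once the two hypotheses of the transfer theorem are verified, the characterisations of weak equivalences, fibrations and generating (trivial) cofibrations all follow directly: since $i$ is the inclusion, a map $f$ of $\nCh_{\Z,\N}$ satisfies that $i(f)$ is an $E_0$-quasi-isomorphism (respectively bidegreewise surjection) if and only if $f$ is one.

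First I would check the smallness hypothesis. Since $t'$ is a left adjoint, it preserves all colimits, so smallness of the domains of $I^n_0$ and $J^n_0$ in $\ncpx$ transfers to smallness of the domains of $t'(I^n_0)$ and $t'(J^n_0)$ relative to the corresponding cell complexes in $\nCh_{\Z,\N}$. Hence these sets permit the small object argument.

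Next I would verify the functoriality hypothesis. The fibrant replacement is trivial: in the $r=0$ model structure of \cref{modelE}, fibrations are precisely bidegreewise surjections, so every object of $\nCh_{\Z,\N}$ is fibrant and the identity functor serves as a functorial fibrant replacement. For the functorial path object I would use $P_0$ from \cref{pathspace}. By \cref{R:P_r} we have $P_0(A)^{p,q} = A^{p,q}\oplus A^{p,q-1}\oplus A^{p,q}$, so if $A\in\nCh_{\Z,\N}$ then for $q<0$ each of the three summands vanishes, and therefore $P_0(A)\in\nCh_{\Z,\N}$. The maps $\iota_A$ and $\pi_A$ from \cref{pathspace} then provide the required factorisation of the diagonal of $A$ into a weak equivalence followed by a fibration, both in $\ncpx$ and hence, after restriction, in $\nCh_{\Z,\N}$.

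The main obstacle is precisely this path object step: it is exactly where the restriction to $r=0$ enters. For $r\geq 1$, the description in \cref{R:P_r} shows that $P_r(A)^{p,q}$ contains summands $A^{p+i,q+i}$ and $A^{p+i+1,q+i}$ with $0\leq i\leq r-1$, which are generally nonzero for $q<0$ (as long as $q+i\geq 0$) even when $A$ is concentrated in the upper half-plane. Hence $P_r$ does not restrict to $\nCh_{\Z,\N}$ in general, and one cannot run the transfer argument this way for $r\geq 1$ without constructing a genuinely different path object.
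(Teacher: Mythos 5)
Your argument is correct and is essentially the paper's own proof: the paper likewise applies the transfer theorem to the adjunction $t'\dashv i$ (running the same argument as for \cref{mczn}), using that every object is fibrant and that $P_0(A)\in\nCh_{\Z,\N}$ by \cref{R:P_r}, and your remark that $P_r$ fails to restrict to the upper half-plane for $r\geq 1$ is precisely why the statement is limited to $r=0$. One small refinement: in the smallness step, the relevant fact is that $t'$ preserves small objects because its right adjoint $i$ is a full inclusion preserving the (filtered) colimits used in the small object argument, rather than because $t'$ itself preserves colimits.
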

\begin{proof}
 The proof proceeds in the same way as that of Proposition~\ref{mczn}, using the existence of a functorial path object $P_0(A)$ for the category $\nCh_{\Z,\N}$ when $r=0$ (see Remark~\ref{R:P_r}).
\end{proof}

\section{Examples of cofibrancy and cofibrant replacement}
\label{S:cofibrants}

In this section we give some examples of cofibrant and non-cofibrant objects.
We will see that all the objects appearing in our generating (trivial) cofibrations for the model structures of 
Theorem~\ref{modelE} have trivial total homology. This leads naturally to the question of how one can build cofibrant objects with non-trivial total homology and we  explore this here. In particular, we note that the ground ring $\kk$ concentrated in a single bidegree is not a cofibrant object and we describe a cofibrant replacement in $n$-multicomplexes. For example, in the case of bicomplexes, this is an ``infinite staircase''. 
We also consider briefly what happens under transfer of model structures to bounded versions.

\begin{exmp}\label{ex:noncofib}
For any $p,q \in \Z$, the $n$-multicomplex
 $ \kk\bgd{p}{q}$ is not cofibrant in $(\ncpx)_r$ for $2\leq n\leq \infty$ and $r\geq 0$.
  Consider the ``corner'' bicomplex, $C(p,q)$, pictured below.
  \[\xymatrix{
      \kk\bgd{p-1}{q}&\ar[l]_{1}\kk\bgd{p}{q}\\
      \kk\bgd{p-1}{q-1}\ar[u]^{1}&\\
  }\]
  We can view this as an $n$-multicomplex for $2\leq n\leq\infty$.
  Define the map of $n$-multicomplexes $\pi \colon C(p,q) \to \kk\bgd{p}{q}$ to be the identity on $\kk$ in bidegree $(p,q)$ and zero in
  all other bidegrees. Then $\pi$  is clearly bidegreewise surjective, so a $0$-fibration. Also, $E_1(C(p,q))= \kk\bgd{p}{q}$
  and $E_1(\pi)$ is the identity map of $ \kk\bgd{p}{q}$. Thus $\pi$ is a trivial $0$-fibration.

  Now we can test against this trivial $0$-fibration to see that $ \kk\bgd{p}{q}$ is not $0$-cofibrant.
  Indeed we find that there is no lift 
  $ \kk\bgd{p}{q}\to C(p,q)$ in the diagram  of $n$-multicomplexes
  \[\xymatrix{
      &C(p,q)\ar[d]_{\pi}\\
      \kk\bgd{p}{q}\ar[r]_-{1}\ar@{.>}[ur]^{\nexists}& \kk\bgd{p}{q}
  }\]

  Any such lift $f$ would have to take the generator $\mathbb{1}_R$ to the generator $\mathbb{1}_R$ in bidegree $(p,q)$ in $C(p,q)$, but
  then for $f$ to be a map of bicomplexes it would have to satisfy $0=f(d_1 \mathbb{1}_R)=d_1f(\mathbb{1}_R)=\mathbb{1}_R$, giving a contradiction. 
  
  Since $ \kk\bgd{p}{q}$ is not $0$-cofibrant, it is not $r$-cofibrant  for any $r$.
\end{exmp}

\begin{prop}\label{cofZW}
For $p,q \in \Z$, $r, s\geq 0$ and $2\leq n\leq\infty$, $\Zzw^n_{s}(p,q)$ is cofibrant in $(\ncpx)_r$.
\end{prop}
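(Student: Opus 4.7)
The approach is to split into two cases depending on the relationship between $s$ and $r$. When $s \leq r$, the map $0 \to \Zzw^n_s(p,q)$ belongs to $J^n_s \subseteq (J^n_r)'$, the generating trivial cofibrations of the $r$-model structure, so $\Zzw^n_s(p,q)$ is (trivially) cofibrant.

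For the remaining case $s \geq r+1$, the plan is to invoke \cref{lempush} with its parameter taken to be $s$, providing the pushout square
\[
\xymatrix{
\Zzw^n_s(P,Q) \ar[r] \ar[d]_{\iota_s} & 0 \ar[d] \\
\Bbw^n_s(P,Q-1) \ar[r] & \Zzw^n_s(P+s-1, Q+s-2)
}
\]
of $n$-multicomplexes. By construction $\iota_s \in I^n_{s-1}$ is a generating cofibration in $(\ncpx)_{s-1}$. Since $s-1 \geq r$, iterating the inclusion ${\it Cof}^n_{r+1} \subseteq {\it Cof}^n_r$ recorded in the introduction shows that $\iota_s$ remains a cofibration in $(\ncpx)_r$. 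Stability of cofibrations under pushouts then makes the right-hand vertical map $0 \to \Zzw^n_s(P+s-1, Q+s-2)$ a cofibration in $(\ncpx)_r$. Allowing $(P,Q)$ to vary freely yields cofibrancy of $\Zzw^n_s(p,q)$ for every $p, q \in \Z$.

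The main obstacle is establishing that $\iota_s$ is a cofibration in the coarser model structure $(\ncpx)_r$ when $r < s-1$; this is precisely what the chain of inclusions ${\it Cof}^n_{r+1} \subseteq {\it Cof}^n_r$ supplies, allowing us to transfer cofibrations from the finer $(s-1)$-model structure -- where $\iota_s$ arises as a generator -- down to the coarser $r$-model structure in which cofibrancy is sought. Notably, no induction on $s$ is needed: once $\iota_s$ is known to be a cofibration in $(\ncpx)_r$, the single pushout furnished by \cref{lempush} immediately produces cofibrancy of $\Zzw^n_s$ at every bidegree.
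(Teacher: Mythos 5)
Your argument is correct, but it takes a genuinely different route from the paper's. The paper proves cofibrancy directly: by \cref{reval}, a lift of $0\to\Zzw^n_{s}(p,q)$ against $f\colon A\to B$ exists if and only if $ZW_s(f)$ is surjective in bidegree $(p,q)$; for an $r$-trivial $r$-fibration, $E_i(f)$ is surjective for every $i\geq 0$ (surjective for $i\leq r$ by the fibration condition, an isomorphism for $i\geq r+1$ since an isomorphism on the $(r+1)$-page propagates to all later pages), and then \cref{mainobs} gives surjectivity of $ZW_s(f)$ for all $s$ by induction, so the lift exists. Your route instead exhibits $\Zzw^n_{s}(p,q)$, for $s\geq r+1$, as a pushout of the generating cofibration $\iota_s\in I^n_{s-1}$ via \cref{lempush} (which applies since $s\geq 1$), after transporting $\iota_s$ into $\mathit{Cof}^n_r$ through the chain $\mathit{Cof}^n_{s-1}\subseteq\cdots\subseteq\mathit{Cof}^n_r$, and treats $s\leq r$ by observing that $0\to\Zzw^n_{s}(p,q)$ lies in $J^n_s\subseteq (J^n_r)'$; the bidegree bookkeeping as $(P,Q)$ ranges over $\Z^2$ is fine. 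What this buys is a cell-structure description: it shows $\Zzw^n_{s}(p,q)$ is in fact an $(I^n_r)'$-cell complex, which is more information than bare cofibrancy. The one caveat is your reliance on $\mathit{Cof}^n_{r+1}\subseteq\mathit{Cof}^n_r$: this inclusion is only stated in the introduction as a summary of results and is not proved in the body of the paper, so you should supply the (short) justification -- every $r$-trivial fibration is an $(r+1)$-trivial fibration, because $E_{r+1}(f)$ being an isomorphism forces $E_i(f)$ to be an isomorphism for all $i\geq r+1$, and since cofibrations are exactly the maps with the left lifting property against trivial fibrations, the inclusion of trivial fibration classes reverses to give the inclusion of cofibration classes. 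There is no circularity in doing so, but note that this justification rests on the same spectral-sequence observation that powers the paper's direct lifting argument, so the two proofs ultimately use the same ingredient packaged differently.
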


\begin{proof}
Fix  $p,q \in \Z$, $r, s\geq 0$ and $n$ with $2\leq n\leq \infty$. Note that a lift exists in the diagram of $n$-multicomplexes
  \[\xymatrix{
      &A\ar[d]_{f}\\
    \Zzw^n_{s}(p,q)\ar[r]\ar@{.>}[ur]& B
  }\]
if and only if $ZW_s(f)$ is surjective in bidegree $(p,q)$. Now suppose that $f$ is an $r$-trivial $r$-fibration. Then $E_i(f)$ is surjective for all 
$i\geq 0$. Using Remark~\ref{mainobs}, it follows that  $ZW_s(f)$ is surjective for all $s$. So the required lift exists.
\end{proof}

\begin{rem}
If we use the $r$-model structure of Theorem~\ref{modelZ} instead, the same line of argument shows
that $\Zzw^n_{s}(p,q)$ is $r$-cofibrant for $s\geq r$.
\end{rem}

\begin{corollary} For every $p,q \in \Z$, $s\geq 0$  and   $2\leq n\leq \infty$, we have
\[E_i(\Zzw^n_{s}(p,q))=\begin{cases}
\kk^{p,q}\oplus\kk^{p-s,q-s+1}& \text{if } 1\leq i\leq s\\
0& \text{if } i\geq s+1.\end{cases}\]
\end{corollary}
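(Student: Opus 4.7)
The plan is to reduce to the case of bicomplexes $n=2$ via the Quillen equivalence of Theorem~\ref{Qequiv}, and then compute the spectral sequence of the resulting bicomplex directly. Since $\Zzw^n_s(p,q)$ is $r$-cofibrant in $(\cpx{n})_r$ for every $r\geq 0$ by Proposition~\ref{cofZW}, the Quillen equivalence $(\cpx{n})_r\rightleftarrows (\cpx{2})_r$ gives that the unit
\[
  \Zzw^n_s(p,q)\to i_{n,2}\,p_{n,2}\,\Zzw^n_s(p,q)
\]
is an $E_r$-quasi-isomorphism, hence an isomorphism on the $E_{r+1}$-page. As this holds for every $r\geq 0$, one gets isomorphisms on every page $E_i$ with $i\geq 1$.

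The main obstacle lies in identifying $p_{n,2}\,\Zzw^n_s(p,q)$ with $\Zzw^2_s(p,q)$. I would establish this by induction on $s$, using that $p_{n,2}$ preserves pushouts (being a left adjoint) and that $p_{n,2}\mathbb{D}^n(p',q')\cong \mathbb{D}^2(p',q')$ (via Yoneda applied to the universal property of the disk). The delicate point is checking that the structure maps $d_0^*$ and $D_{s-1}^*$ of Definition~\ref{D:curlyZW} are sent to their $\cpx{2}$-counterparts: $d_0$ is preserved by $p_{n,2}$, while the higher $d_i$ ($i\geq 2$) act trivially on $p_{n,2}(M)$, so only the $d_1$-term of $D_{s-1}^*$ survives, matching the definition in $\cpx{2}$. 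Once this identification is in place, the problem reduces to computing the spectral sequence of $\Zzw^2_s(p,q)$.

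For the bicomplex computation I would translate by $(p,q)$ and work with $C_s = \Zzw^2_s(0,0)$, the staircase bicomplex with generators $a_k=\beta_{-k,-k}$ and $b_k=\beta_{-k-1,-k}$ for $0\leq k\leq s-1$, and non-trivial structure maps $d_1 a_k=b_k$ and $d_0 a_k=b_{k-1}$ (for $k\geq 1$). A direct computation of $H(C_s,d_0)$ gives $E_1(C_s)=\kk[a_0]\oplus\kk[b_{s-1}]$, concentrated in bidegrees $(0,0)$ and $(-s,-s+1)$. For the higher differentials I would use the explicit formula $\Delta_r[a_0]=\bigl[\sum_{i=1}^r(-1)^i d_i a_{r-i}\bigr]=-[b_{r-1}]$ (extending $a_0$ to the witness cycle $(a_0,\dots,a_{r-1})$, with the bicomplex vanishing of $d_i$ for $i\geq 2$): for $1\leq r\leq s-1$, the identity $b_{r-1}=d_0 a_r$ shows that $b_{r-1}\in B_1^{-r,-r+1}(C_s)\subseteq B_r^{-r,-r+1}(C_s)$, so $\Delta_r=0$ on $E_r$ and thus $E_1=E_2=\cdots=E_s$; for $r=s$ a direct inspection of $B_s^{-s,-s+1}(C_s)$, propagating the relations $\sum_{i=0}^l(-1)^i d_i \gamma_{l-i}=0$ to kill all possible witness-boundary components, shows that $b_{s-1}\notin B_s^{-s,-s+1}(C_s)$, so $\Delta_s$ is an isomorphism, $E_{s+1}(C_s)=0$, and hence $E_i(C_s)=0$ for all $i\geq s+1$. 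The case $s=0$ is immediate since $\Zzw^n_0(p,q)=\mathbb{D}^n(p,q)$ has trivial $d_0$-homology.
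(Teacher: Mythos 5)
Your proposal is correct and takes essentially the same route as the paper's proof: cofibrancy of $\Zzw^n_{s}(p,q)$ from Proposition~\ref{cofZW}, the identification $p_{n,2}(\Zzw^n_{s}(p,q))\cong\Zzw^2_{s}(p,q)$ via the pushout construction of Definition~\ref{D:curlyZW} and the fact that $p_{n,2}$ is a left adjoint, the unit being an $E_r$-quasi-isomorphism by Theorem~\ref{Qequiv}, and then reading off the spectral sequence of the staircase bicomplex. The only (harmless) differences are that you spell out the staircase computation which the paper leaves as ``easy to read off directly'', and that you invoke cofibrancy for all $r\geq 0$ rather than deducing everything from the $E_0$ case.
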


\begin{proof} The $n$-multicomplex $\Zzw^n_{s}(p,q)$ is $r$-cofibrant for any $r\geq 0$ by 
Proposition~\ref{cofZW}. We claim that $p_{n,2}(\Zzw^n_{s}(p,q))=\Zzw^2_{s}(p,q)$ {(see Section \ref{S:quillenequivalence} for the definition of $p_{n,2}$)}. This follows from the
definition of $\Zzw^n_{s}(p,q)$ via successive pushout (Definition~\ref{D:curlyZW}) and the fact that
 $p_{n,2}$ is a left adjoint and so preserves pushouts, together with the initial cases
 $p_{n,2}(\Zzw^n_{0}(p,q))=\Zzw^2_{0}(p,q)$ and
 $p_{n,2}(d_0^*)=d_0^*$. By Theorem~\ref{Qequiv}, since $\Zzw^n_{s}(p,q)$ is $r$-cofibrant,
 the unit of the adjunction $\Zzw^n_{s}(p,q)\rightarrow \Zzw^2_{s}(p,q)$ is an $E_r$-quasi-isomorphism, for each $r\geq 0$, in particular an $E_0$-quasi-isomorphism. For the staircase bicomplex
 $\Zzw^2_{s}(p,q)$ it is easy to read off the pages of the spectral sequence directly:
\[E_i(\Zzw^2_{s}(p,q))=\begin{cases}
\kk^{p,q}\oplus\kk^{p-s,q-s+1}& \text{ if } 1\leq i\leq s\\
0& \text{ if } i\geq s+1,\end{cases}\]
as required.
\end{proof}

\begin{defn}\label{ZWinfinity}
Let $p,q \in \Z$ and $2\leq n\leq \infty$.
We define $\Zzw_\infty^n(p,q)=\varinjlim_s \Zzw_s^n(p,q)$, where the 
 colimit is taken over the  maps $\Zzw_s^n(p,q)\to \Zzw_{s+1}^n(p,q)$ representing the
projection maps $ZW_{s+1}^n\to ZW_s^n$.
\end{defn}

\begin{exmp}
When $n=2$, the map $\Zzw_s^2(p,q)\to \Zzw_{s+1}^2(p,q)$ is the inclusion of a staircase with $s$-horizontal steps into a staircase with $s+1$-horizontal steps
and $\Zzw_\infty^2(p,q)$ is the infinite (downwards to the left) staircase bicomplex, with top right entry in bidegree $(p,q)$:
\[
  \begin{tikzcd}[cramped,sep=scriptsize]
  &                                              &                            & \kkpic                     & \kkpqpic \arrow[l] \\
  &                                              & \kkpic                     & \kkpic \arrow[u] \arrow[l] &                    \\
  & \kkpic                                       & \kkpic \arrow[u] \arrow[l] &                            &                    \\
  & \kkpic \arrow[u] \arrow[ld, no head, dotted] &                            &                            &                    \\
  \phantom{.} &                                              &                            &                            &                   
\end{tikzcd}
\]
\end{exmp}

\begin{prop}
Let $p,q \in \Z$ and $2\leq n\leq \infty$. Then 
$\Zzw_\infty^n(p,q)\to \kk^{p,q}$ given by projection to $\kk^{p,q}$ is an $r$-cofibrant replacement of $\kk^{p,q}$
for all $r\geq 0$.
\end{prop}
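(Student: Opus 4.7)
The plan is to verify the two defining properties of an $r$-cofibrant replacement separately: that $\Zzw_\infty^n(p,q)$ is $r$-cofibrant in $(\ncpx)_r$ for every $r\geq 0$, and that the projection is an $E_r$-quasi-isomorphism for every $r\geq 0$.

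For cofibrancy, I would write $\Zzw_\infty^n(p,q)$ as the sequential colimit of the tower $\Zzw_0^n(p,q)\to \Zzw_1^n(p,q)\to\cdots$, whose transition maps $i_s\colon \Zzw_s^n(p,q)\to\Zzw_{s+1}^n(p,q)$ are the pushouts of Definition~\ref{D:curlyZW}. The aim is to show each $i_s$ is an $r$-cofibration and then invoke closure of $r$-cofibrations under transfinite composition, together with the fact that $0\to\Zzw_0^n(p,q)$ is an $r$-cofibration by Proposition~\ref{cofZW}. To show $i_s$ is an $r$-cofibration I would test the lifting property directly against a trivial $r$-fibration $f\colon A\to B$; such an $f$ has $ZW_t(f)$ surjective for every $t\geq 0$, obtained by iterating Remark~\ref{mainobs} from the surjectivity of $f$, of $E_i(f)$ for $0\leq i\leq r$, and from $E_{r+1}(f)$ being an isomorphism. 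Given square data corresponding to $(a_0,\ldots,a_{s-1})\in ZW_s^{p,q}(A)$ and $(b_0,\ldots,b_s)\in ZW_{s+1}^{p,q}(B)$ with $f(a_i)=b_i$, I would first lift $(b_0,\ldots,b_s)$ to some $(\tilde a_0,\ldots,\tilde a_s)\in ZW_{s+1}^{p,q}(A)$ using $ZW_{s+1}(f)$, and then adjust $\tilde a_s$ by a suitable element of $\ker f$ so that the resulting witness $(s{+}1)$-cycle has its first $s$ coordinates equal to the prescribed $(a_0,\ldots,a_{s-1})$.

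For the weak equivalence part, I would use the Quillen equivalence $p_{n,2}\dashv i_{n,2}$ of Theorem~\ref{Qequiv}. Since $p_{n,2}$ is a left adjoint it commutes with colimits, and since $p_{n,2}(\Zzw_s^n(p,q))=\Zzw_s^2(p,q)$ (as in the proof of the preceding corollary), we obtain $p_{n,2}(\Zzw_\infty^n(p,q))=\Zzw_\infty^2(p,q)$. Given cofibrancy from the first step, the adjunction unit $\Zzw_\infty^n(p,q)\to \Zzw_\infty^2(p,q)$ is an $E_r$-quasi-isomorphism, so it suffices to handle the bicomplex projection $\Zzw_\infty^2(p,q)\to \kk^{p,q}$. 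Here I would compute the spectral sequence directly: since $E_r$ commutes with filtered colimits of bigraded $\kk$-modules (stabilising in each bidegree) and $E_i(\Zzw_s^2(p,q))\cong \kk^{p,q}\oplus \kk^{p-s,q-s+1}$ for $1\leq i\leq s$, inspection of the transition maps on $E_i$-pages shows that the tail summand $\kk^{p-s,q-s+1}$ dies in the colimit at every fixed bidegree (the generator $d_1 a_{s-1}$ at bidegree $(p-s,q-s+1)$ in $\Zzw_s^2$ becomes the $d_0$-boundary $d_0 a_s$ in $\Zzw_{s+1}^2$), leaving precisely $\kk^{p,q}$ concentrated at $(p,q)$, with the projection inducing the identity.

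The main obstacle is the cofibrancy step: the naive attempt to exhibit $i_s$ as a pushout of $d_0^*\colon \Zzw_0^n(p-s,q-s+1)\to \Zzw_0^n(p-s,q-s)$ fails because $d_0^*$ is not itself a cofibration, so the kernel-element adjustment above requires a careful argument to show that the obstructing class in the $d_0$-homology of $\ker f$ vanishes in the required bidegree. This is the point where the full strength of $ZW_t(f)$ being surjective at all levels $t$ is used, and is the only significant departure from the reasoning in the bicomplex case of~\cite{celw}.
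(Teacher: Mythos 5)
Your strategy for the cofibrancy half rests on a step that is false, not merely unproved: for $r\geq 1$ the transition maps $i_s\colon \Zzw^n_s(p,q)\to\Zzw^n_{s+1}(p,q)$ are \emph{not} $r$-cofibrations, so no argument via closure of cofibrations under transfinite composition can succeed. Concretely, let $K$ be the $n$-multicomplex with $K^{p,q}=\kk e$, $K^{p-1,q}=\kk e'$, $d_1e=e'$, and all other modules and structure maps zero. Then $E_1(K)=K$ and $E_2(K)=0$, so $K\to 0$ is an $r$-trivial $r$-fibration for every $r\geq 1$ (each $E_i(f)$ is trivially surjective and $E_{r+1}(f)$ is an isomorphism). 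The square with top map $\Zzw^n_1(p,q)\to K$ classifying $e\in ZW_1^{p,q}(K)$ and bottom map $\Zzw^n_2(p,q)\to 0$ commutes, but a lift would be a pair $(e,a_1)\in ZW_2^{p,q}(K)$ with $a_1\in K^{p-1,q-1}=0$ and $d_0a_1=d_1e=e'\neq 0$, which is impossible. This is exactly the obstruction class in the $d_0$-homology of $\ker f$ that you propose to kill ``using the full strength of $ZW_t(f)$ being surjective at all levels'': here every $ZW_t(f)$ is surjective (the target is $0$), yet the class of $d_1e$ survives, so the careful argument you defer cannot exist at that level of generality. (For $r=0$ your route does work, since then a trivial fibration has $d_0$-acyclic kernel by the long exact sequence on $E_1$, and the obstruction dies.)

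What is true, and what the paper actually uses, is weaker: one only has to solve lifting problems whose bottom map is defined on all of $\Zzw^n_\infty(p,q)$, i.e.\ one must lift an \emph{infinite} witness cycle $(b_0,b_1,\dots)$ of $B$, rather than extend an arbitrary partial lift in $A$ over an arbitrary one-step extension in $B$. The paper phrases this as choosing the stagewise lifts of Proposition~\ref{cofZW} compatibly for a map out of the colimit; your proposal replaces this with the stronger stagewise lifting property above, which fails, and supplies no substitute, so the cofibrancy half is a genuine gap. The weak-equivalence half is essentially fine and close to the paper's: the paper reads off $E_r$ of the transition maps from the preceding corollary (whose proof already contains your reduction via $p_{n,2}$ to the infinite staircase of bicomplexes) and passes to the colimit; note, however, that your version additionally presupposes the cofibrancy of $\Zzw^n_\infty(p,q)$, since it invokes the unit of the Quillen equivalence at that object, whereas the direct computation does not.
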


\begin{proof}
First we check that $\Zzw_\infty^n(p,q)$ is $r$-cofibrant for all $r\geq 0$. 
The relevant lift exists for $\Zzw_\infty^n(p,q)$ if and only if compatible lifts exist for each $\Zzw_s^n(p,q)$. Such lifts
do exist for each $\Zzw_s^n(p,q)$ by Proposition~\ref{cofZW} and it is straightforward to check that they are compatible.

The map $E_r(\Zzw_s^n(p,q)\to \Zzw_{s+1}^n(p,q))$ is the projection to $\kk^{p,q}$ if $1\leq r\leq s$ and $0$ otherwise, so we 
see that  $E_r(\Zzw^n_{\infty}(p,q))=\kk^{p,q}$ for all $r\geq 1$.  And the projection $\Zzw_\infty^n(p,q)\to \kk^{p,q}$ induces
an isomorphism on $E_r$ for all $r\geq 1$, that is, it is an $E_r$-quasi-isomorphism for all $r\geq 0$.
\end{proof}

\subsection{Upper half-plane versions}

We consider the $r=0$ model structure on upper half-plane $n$-multicomplexes from Proposition~\ref{P:upperhalfplane}.
The generating cofibrations and generating trivial cofibrations are given by $t'I_0$ and $t'J_0$. The interesting new thing that appears is the cotruncation of $\iota_{1} \colon \Zzw_{1}(p,0)\to\Bbw_{1}(p,-1)$, which is
$t'\iota_1=0 \colon \Zzw_{1}(p,0)\to 0$.
This allows one to see that $\kk^{p,0}$ is $0$-cofibrant, since we have a pushout diagram
\[\xymatrix{
    \Zzw_1(p-1,0)\ar[d]\ar[r] & 0\ar[d]\\
    \Zzw_1(p,0)\ar[r] & \kk^{p,0}
}\]
{where the top horizontal map is a cofibration and $\Zzw_1(p,0)$ is cofibrant.}
On the other hand, $\kk^{p,q}$ for $q>0$ is not $0$-cofibrant, just as in Example~\ref{ex:noncofib}.
This shows (unsurprisingly) that in the $0$-model structure on upper half-plane $n$-multicomplexes, cofibrancy is not preserved under vertical shift.

\bibliographystyle{abbrvurl}

\setlength{\parindent}{0pt}

\end{document}